\numberwithin{equation}{section}
\newtheorem{theorem}{Theorem}[section]
\newtheorem{corollary}{Corollary}[section]
\newtheorem{lemma}{Lemma}[section]
\newtheorem{defn}{Definition}[section]
\theoremstyle{remark}
\newtheorem{remark}{Remark}[section]
\newcommand{\R}{\mathbb{R}}
\renewcommand{\O}{\Omega}
\newcommand{\p}{\partial}
\newcommand{\loc}{\mathrm{loc}}
\newcommand{\dist}{\mathrm{dist}\,}
\newcommand{\sm}{\setminus}
\newcommand{\wt}{\widetilde}
\begin{document}

\title[Variable coefficient regularity]{Regularity for almost-minimizers of variable coefficient Bernoulli-type functionals}
\author{Guy David}
\author{Max Engelstein}
\author{Mariana Smit Vega Garcia}
\author{Tatiana Toro}
  \thanks{GD was partially supported by the ANR, programme blanc GEOMETRYA, ANR-12-BS01-0014, 
 the European H2020 Grant GHAIA 777822, and the Simons Collaborations in MPS Grant 601941, GD.
 ME was partially supported by NSF DMS 2000288, NSF DMS 1703306 and by David Jerison's grant NSF DMS 1500771. TT was partially supported by  NSF grant DMS-1664867, 
 by the Craig McKibben \& Sarah Merner Professorship in Mathematics, 
 and by the Simons Foundation Fellowship 614610.} 
\subjclass[2010]{Primary 35R35.}
\keywords{almost-minimizer, free boundary problem, Alt-Caffarelli functional, Alt-Caffarelli-Friedman}
\address{
Laboratoire de Math\'ematiques d?Orsay, Univ. Paris-Sud, CNRS, Universit\'e Paris-Saclay, 
91405 Orsay, France.}
\email{Guy.David@math.u-psud.fr}
\address{School of Mathematics,\\ University of Minnesota,\\ Minneapolis,
MN, 55455, USA.}
\email{mengelst@umn.edu}
\address{Western Washington University \\ Department of Mathematics \\
BH 230\\
Bellingham, WA 98225, USA}
\email{Mariana.SmitVegaGarcia@wwu.edu}

\address{Department of Mathematics\\ University of Washington\\ Box 354350\\ Seattle, WA 98195-4350}
\email{toro@uw.edu}
\date{\today}

\maketitle

\vspace{-.7cm}
\begin{abstract}
In \cite{davidtoroalmostminimizers} and \cite{davidengelsteintoro}, 
the authors studied almost minimizers 
for functionals of the type first studied by Alt and Caffarelli in \cite{AC} and Alt, Caffarelli and Friedman in \cite{ACF}. 
 In this paper we study the regularity of almost minimizers to energy functionals with variable coefficients (as opposed to \cite{davidtoroalmostminimizers}, \cite{davidengelsteintoro}, \cite{AC} and \cite{ACF} which deal only with the ``Laplacian" setting). We prove Lipschitz regularity up to, and across, the free boundary, fully 
 generalizing the results of \cite{davidtoroalmostminimizers} to the variable coefficient setting.
\end{abstract}

\selectlanguage{french}
\begin{abstract} 
Dans \cite{davidtoroalmostminimizers} et \cite{davidengelsteintoro}, les auteurs ont \'etudi\'e les fonctions 
presque minimales pour des fonctionnelles comme celles d'Alt et Caffarelli \cite{AC}, 
et d'Alt, Caffarelli et Friedman \cite{ACF}. Dans ce papier on \'etudie la r\'egularit\'e des fonctions presque minimales
pour des fonctionnelles d'\'energie \`a coefficients variables (contrairement \`a \cite{davidtoroalmostminimizers}, \cite{davidengelsteintoro}, \cite{AC} et \cite{ACF} qui se placent dans le cadre du Laplacian). On prouve que ces fonctions sont Lipschitziennes juqu'\`a la fronti\`ere, et \`a travers, g\'en\'eralisant ainsi 
les r\'esultats de \cite{davidtoroalmostminimizers} au cas de coefficients variables.
\end{abstract}
\selectlanguage{english}

\section{Introduction}

In \cite{davidtoroalmostminimizers} and \cite{davidengelsteintoro}, the authors studied almost-minimizers for functionals of the type first studied by Alt and Caffarelli in \cite{AC} and Alt, Caffarelli and Friedman in \cite{ACF}. 
Almost-minimization is the natural property to consider once 
the presence of noise or lower order terms 
in a problem is taken into account. In this paper we study the regularity of almost minimizers to energy functionals with variable coefficients (as opposed to \cite{davidtoroalmostminimizers}, \cite{davidengelsteintoro}, \cite{AC} and \cite{ACF} which deal only with the ``Laplacian" setting). 

The point of the present generalization is to allow anisotropic energies that depend mildly on
the point of the domain, so that in particular our classes of minimizers should be essentially
invariant by $C^{1+\alpha}$ diffeomorphisms.

The variable coefficient problem has been studied before: Caffarelli, in \cite{CafHar3}, proved regularity for solutions to a more general free boundary problem. De Queiroz and Tavares, in \cite{QT}, provided the first results for almost minimizers with variable coefficients: the authors proved regularity away from the free boundary for almost minimizers to the same functionals we consider here (they consider a slightly broader class of functionals, of which our functionals are a limiting case). 

Our work differs from that of \cite{QT} in two ways: first, our definition of almost-minimizing is, {\it a priori}, broader than that considered in \cite{QT}, \cite{davidtoroalmostminimizers} or \cite{davidengelsteintoro} (for more discussion see Section \ref{S:addmult} below). Second, and more significantly, we prove Lipschitz regularity up to, and across, the free boundary, in contrast to \cite{QT}, thus we fully generalize the results of \cite{davidtoroalmostminimizers} to the variable coefficient setting. In a forthcoming paper where we address the free boundary regularity for $(\kappa, \alpha)$-almost minimizers in the variable coefficient setting, we tackle the important issues of compactness for sequences of almost minimizers and nondegeneracy properties of almost minimizers near the free boundary. 

Besides including the notion of almost-minimizers from \cite{QT}, \cite{davidtoroalmostminimizers} or \cite{davidengelsteintoro}, our definition of almost minimizers also connects to the work of \cite{GZ}. There, the authors extend the notion of $\omega$-minimizers introduced by Anzellotti in \cite{A}, to the framework of multiple-valued functions in the sense of Almgren, and prove H\"{o}lder regularity of Dirichlet  multiple-valued $(c,\alpha)$-almost minimizers.

Almost-minimizers to functionals of Alt-Caffarelli or Alt-Caffarelli-Friedman type with variable coefficients arise naturally in measure-penalized minimization problems for Dirichlet eigenvalues of elliptic operators (e.g. the Laplace-Beltrami operator on a manifold; see \cite{LS} for a treatment of the analogous measure-constrained problem). We also want to draw attention to the interesting paper \cite{STV}, which proves (using an epiperimetric inequality) free boundary regularity for almost-minimizers of the functionals considered here, in dimension $n = 2$. Throughout that paper they need to assume {\it a priori} Lipschitz regularity on the minimizer. Our paper shows (as alluded to in their paper) that this assumption is redundant.  Note that while the class of almost-minimizers considered in \cite{STV} may seem broader than the one considered here, the two are actually equivalent (see Remark \ref{equivam} below). 

\begin{remark}\label{r:trey}
Shortly after this paper was initially posted to the ArXiv, B. Trey posted \cite{Trey}. In that paper, the author studies the regularity of shape optimizers for variable coefficient divergence form elliptic operators (e.g. the domain of area one which minimizes the sum of the first $k$ Dirichlet eigenvalues of a given operator). 

In particular, Trey (also adapting the approach of \cite{davidtoroalmostminimizers}), proves very similar results to ours but for vectorial ``quasi-minimizers" of \eqref{eqn:defjplus} with the additional property that they are solutions of a divergence form elliptic PDE with right hand side (see Theorem 1.2 in \cite{Trey}). 

We would like to emphasize that our results neither imply nor are implied by those of \cite[Theorem 1.2]{Trey}, due to the different notions of ``minimization" used and to the presence of an underlying PDE in \cite{Trey} (solutions of which were known to satisfy Theorem \ref{almostmonotonicity}). 
\end{remark}

The structure of the paper is as follows. 
In Section \ref{S:prelim} we introduce our notion of $(\kappa,\alpha)$-almost-minimizer, recalling the one used in \cite{davidtoroalmostminimizers}, \cite{davidengelsteintoro} and \cite{QT}. 
In Section \ref{S:coordinate} we address basic facts regarding the change of coordinates that will be used throughout the paper; in Section \ref{S:addmult} we address the connection between the 
``multiplicative" almost-minimizers used in \cite{davidtoroalmostminimizers} and \cite{QT} and 
the ``additive" almost-minimizers used here. 
In Section \ref{S:continuity} we prove the 
continuity of almost-minimizers; in Section \ref{S:C1beta} we prove the 
$C^{1,\beta}$ regularity of almost minimizers in $\{u>0\}$ and $\{u<0\}$. In Section \ref{S:technical} we prove the bulk of the technical results needed to obtain local Lipschitz regularity for both the one phase and two-phase problems. In Section \ref{S:onephaselip} we prove the 
local Lipschitz continuity of almost minimizers of the one-phase problem. In Section \ref{S:am} we establish an analogue of the Alt-Caffarelli-Friedman monotonicity formula for variable coefficient almost-minimizers. Finally, in Section \ref{sec:twophaselipschitz} we prove the 
local Lipschitz continuity for two-phase almost minimizers.

\section{Preliminaries}\label{S:prelim}

We consider a bounded domain $\O\subset\R^n$, $n \geq 2$, 
and study the regularity of the free boundary of almost minimizers of the functional 
\begin{equation}\label{eqn:defj}
J(u)=\int_\O \left\langle A(x)\nabla u(x), \nabla u(x)\right\rangle
+q^2_+(x)\chi_{\{u>0\}}(x) +q^2_-(x)\chi_{\{u<0\}}(x), \end{equation}
where $q_+, q_- \in L^\infty(\O)$ are bounded real valued functions and $A \in C^{0,\alpha}(\O; \R^{n\times n})$ is a H\"older continuous function with values in symmetric, uniformly positive definite matrices. Let $0 < \lambda \leq \Lambda < \infty$ be such that $\lambda |\xi|^2 \leq A(x)\xi\cdot \xi \leq \Lambda |\xi|^2$ for all $x\in \O$. 

We will also consider the situation where $ u \geq 0$ and $q_- \equiv 0$, and
\begin{equation} \label{eqn:defjplus}
J^+(u)=\int_\O\left\langle A\nabla u,\nabla u\right\rangle+q^2_+(x)\chi_{\{u>0\}},
\end{equation}
where $q_+$ and $A$ 
are as above. 

We do not need any boundedness or regularity assumption on $\O$,
because our results will be local and so we do not need to define a trace on $\p \O$. 
Also, $q_-$ is not needed when we consider $J^+$, and then we may 
assume that it is identically zero. 

\begin{defn}[Definition 1 of almost minimizers, with balls]\label{d1}  
Set
\begin{equation} \label{eqn:Kloc}
K_{\loc}(\O) = \left\{u\in L^1_{\loc}(\O) \, ; \nabla u\in L^2(B(x,r))
\mbox{ for every ball $B(x,r)$ such that } \overline B(x,r) \subset \O \right\},
\end{equation}
\begin{equation} \label{eqn:Kplusloc}
K_{\loc}^+(\O) = \left\{u\in K_{\loc}(\O) \, ; u(x) \geq 0 
\mbox{ almost everywhere on } \O\right\}, 
\end{equation}
and let constants $\kappa \in (0,+\infty)$ and $\alpha \in (0,1]$
be given.

We say that $u$ is a $(\kappa, \alpha)$-almost minimizer for $J^+_B$ in $\O$ 
if $u\in K_{\loc}^+(\O)$ and
\begin{equation}\label{eqn:amjplus}
J^+_{B,x,r}(u)\le J^+_{B,x,r}(v) + \kappa r^{n+\alpha}
\end{equation}
for every ball $B(x,r)$ such that $\overline B(x,r) \subset\O$ and every  
$v\in L^1(B(x,r))$ such that  $\nabla v\in L^2(B(x,r))$ 
and $v=u$ on $\p B(x,r)$, where
\begin{equation}\label{eqn:jplusonball}
J^+_{B,x,r}(v)=\int_{B(x,r)}\left\langle A\nabla v,\nabla v\right\rangle+q^2_+ \, \chi_{\{v>0\}}.
\end{equation}

Similarly, we say that $u$ is a $(\kappa, \alpha)$-almost minimizer for $J_B$ in $\Omega$ if $u \in K_{\mathrm{loc}}(\Omega)$ and \begin{equation}\label{eqn:amj}
J_{B,x,r}(u)\le J_{B,x,r}(v) + \kappa r^{n+\alpha}
\end{equation}
for every ball $B(x,r)$ such that $\overline B(x,r) \subset\O$ and every  
$v\in L^1(B(x,r))$ such that  $\nabla v\in L^2(B(x,r))$ 
and $v=u$ on $\p B(x,r)$, where
\begin{equation}\label{eqn:jonball}
J_{B,x,r}(v)=\int_{B(x,r)}\left\langle A\nabla v,\nabla v\right\rangle+q^2_+ \, \chi_{\{v>0\}} + q^2_-\, \chi_{\{v < 0\}}.
\end{equation}
\end{defn}

When we say $v=u$ on $\p B(x,r)$, we really mean that their traces coincide.
Equivalently we could extend $v$ by setting $v=u$ on $\Omega \sm B(x,r)$ and require that
$v \in K_{\loc}(\O)$. This is discussed in detail in \cite{davidtoroalmostminimizers}.

We note that this definition differs from the one found in \cite{davidtoroalmostminimizers} (or \cite{QT}),
even when $A$ is the identity matrix; we will address this 
in more detail in Section \ref{S:addmult}. For now, let us only comment that the definition given by \eqref{eqn:amj} is 
more general than that of \cite{davidtoroalmostminimizers}. 

When working with variable coefficients, it is also convenient to work with a definition of almost minimizers 
that considers ellipsoids instead of balls. For this effect, we define 

\begin{equation}\label{tt-1}
T_{x}(y)=A^{-1/2}(x)(y-x)+x, \qquad T_{x}^{-1}(y)=A^{1/2}(x)(y-x)+x,  \qquad E_x(x,r)=T_x^{-1}(B(x,r)).
\end{equation}
Note that
\begin{equation}\label{tt-2}
E_x(x,r)\subset B(x,\Lambda^{1/2}r) \qquad\hbox{ and }\qquad B(x,r)\subset E_x(x,\lambda^{-1/2}r).
\end{equation}

\begin{defn}[Definition 2 of almost minimizers, with ellipsoids]\label{d2} 
Let
\begin{equation} \label{eqn:Kloc2}
K_{\loc}(\O,E) = \left\{u\in L^1_{\loc}(\O) \, ; \nabla u\in L^2(E_x(x,r))
\mbox{ for } \overline E_x(x,r) \subset\O \right\} 
\end{equation}
and
\begin{equation} \label{eqn:Kplusloc2}
K_{\loc}^+(\O,E) = \left\{u\in K_{\loc}(\O,E) \, ; u(x) \geq 0 
\mbox{ almost everywhere on } \O\right\}.
\end{equation}

We say that $u$ is a $(\kappa, \alpha)$-almost minimizer for $J^+_E$ in $\O$ if
$u\in K_{\loc}^+(\O,E)$ and
\begin{equation}\label{eqn:amjplus2}
J^+_{E,x,r}(u)\le J^+_{E,x,r}(v) + \kappa r^{n+\alpha}
\end{equation}
for every ellipsoid $E_x(x,r)$ such that $\overline{E}_x(x,r) \subset\O$ and every  
$v\in L^1(E_x(x,r))$ such that  $\nabla v\in L^2(E_x(x,r))$ 
and $v=u$ on $\p E_x(x,r)$, where
\begin{equation}\label{eqn:jplusonball2}
J^+_{E,x,r}(v)=\int_{E_x(x,r)}\left\langle A\nabla v,\nabla v\right\rangle+q^2_+ \, \chi_{\{v>0\}}.
\end{equation}

Similarly, we say that $u$ is a $(\kappa, \alpha)$-almost minimizer for $J_E$ in $\Omega$ if $u \in K_{\mathrm{loc}}(\Omega,E)$ and 
\begin{equation}\label{eqn:amj2}
J_{E,x,r}(u)\le J_{E,x,r}(v) + \kappa r^{n+\alpha}
\end{equation}
for every ellipsoid $E_x(x,r)$ such that $\overline{E}_x(x,r) \subset\O$ and every  
$v\in L^1(E_x(x,r))$ such that  $\nabla v\in L^2(E_x(x,r))$ 
and $v=u$ on $\p E_x(x,r)$, where
\begin{equation}\label{eqn:jonballg} 
J_{E,x,r}(v)=\int_{E_x(x,r)}\left\langle A\nabla v,\nabla v\right\rangle+q^2_+ \, \chi_{\{v>0\}} + q^2_-\, \chi_{\{v < 0\}}.
\end{equation}
\end{defn}

Notice that when $A=I$ (the identity matrix), both definitions coincide. Moreover, for a general matrix $A$, 
if $u$ is a $(\kappa, \alpha)$-almost minimizer for $J_B$ in $\Omega$ according to Definition \ref{d1}, 
then it satisfies \eqref{eqn:amjplus2} in Definition \ref{d2} (with constant $\Lambda^{(n+\alpha)/2}\kappa$ and exponent $\alpha$) whenever $x$ and $r$ are such that $\overline{B}(x,\Lambda^{1/2}r)\subset \Omega$.

Similarly, if $u$ is a $(\kappa, \alpha)$-almost minimizer for $J_E$ in $\Omega$ according to Definition \ref{d2}, then it satisfies \eqref{eqn:amjplus} in Definition \ref{d1} (with constant $\lambda^{-(n+\alpha)1/2}\kappa$ and exponent $\alpha$) whenever $x$ and $r$ are such that $\overline{B}(x,\Lambda^{1/2}\lambda^{-1/2}r)\subset \Omega$.

Given that we are mostly interested in the regularity of almost-minimizeres away from $\partial \Omega$ these definitions are essentially equivalent. Bearing this in mind, we will work with almost minimizers according to Definition \ref{d2}, recalling that such functions satisfy \eqref{eqn:amjplus} when 
$\overline{B}(x,\Lambda^{1/2}\lambda^{-1/2}r)\subset \Omega$. 
We will most often not write ``$(\kappa, \alpha)$-almost minimizer", but only ``almost minimizer", and we will drop the subscripts $B$ and $E$ from the energy functional.\\

\noindent{\bf Notation:} Throughout the paper we will write $B(x,r)=\{y\in\R^n \ : \ |y-x|<r\}$ 
and $\partial B(x,r)=\{y\in\R^n \ : \ |y-x|=r\}$. We will consider $A \in C^{0,\alpha}(\O; \R^{n\times n})$ a H\"older continuous function with values in symmetric, uniformly positive definite matrices, 
and $0 < \lambda \leq \Lambda < \infty$ such that $\lambda |\xi|^2 \leq A(x)\xi\cdot \xi \leq \Lambda |\xi|^2$ for all $x\in \O$. Additionally, $q_\pm \in L^\infty(\O)$ will be bounded real valued functions. 
We will also frequently refer to
\begin{equation}\label{defTE}
T_{x}(y)=A^{-1/2}(x)(y-x)+x, \,\,\,\, T_{x}^{-1}(y)=A^{1/2}(x)(y-x)+x,  \,\,\,\, E_x(x,r)=T_x^{-1}(B(x,r)).
\end{equation}

Moreover, we will write 
\begin{equation}\label{defux}
u_x(y)=u(T_x^{-1}(y)),  \,\,\, 
(q_x)_{\pm}(y)=q_{\pm}(T_x^{-1}(y)), 
\,\,\,  
A_x(y)=A^{-1/2}(x)A(T_x^{-1}(y))A^{-1/2}(x).
\end{equation}

Notice that $T_x(x)=x$ and $A_x(x)=I$.

\subsection{Coordinate changes}\label{S:coordinate}

Compared to \cite{davidtoroalmostminimizers} and \cite{davidengelsteintoro}, 
our proofs will use two new ingredients: the good invariance properties of our notion with respect to 
bijective affine transformations, and the fact that the slow variations of $A$ allow us to make approximations
by freezing the coefficients. We take care of the first part in this subsection.

Many of our proofs will use the affine mapping $T_{x}$ to transform our almost minimizer $u$ into
another one $u_x$, which corresponds to a new matrix function $A_x(y)$ that coincides with the identity at $x$.
In this subsection we check that our notion of almost minimizer behaves well under bijective affine 
transformations. Our second definition, with ellipsoids, is more adapted to this.

\begin{lemma}\label{lem: changingvariables}
Let $u$ be a $(\kappa,\alpha)$-almost minimizer for $J_E$ (or $J^+_E$) in $\Omega \subset \mathbb R^n$.
Let $T: \R^n \to \R^n$ be an injective affine mapping, and denote by $S$ the 
linear map corresponding to $T$ (i.e $Tx=Sx+ z_0$ for some $z_0\in \R^n$). Also let $0 < a \leq b <+\infty$ be such that 
$a|\xi| \leq |S\xi| \leq b|\xi|$ for $\xi \in \R^n$. Then define functions
$u_T$, $q_{T,+}$, $q_{T,-}$ on $\Omega_T = T(\Omega)$ by
\begin{equation}\label{ux0Sdefbis} 
u_{T}(y) = u(T^{-1}(y)) \ \text{ and }\,  q_{T,\pm}(y) = q_\pm(T^{-1}(y)) \ \text{ for } y\in \Omega_T,
\end{equation} 
and a matrix-valued function $A_T$ by 
\begin{equation}\label{Ax0bis}
A_{T}(y) = S A(T^{-1}y)S^t \ \text{ for } y\in \Omega_T,
\end{equation}
where $S^t$ is the transposed matrix of $S$.

Then $u_T$ is a $(\tilde{\kappa}, \alpha)$-almost minimizer of $J_{E,T}$ (or $J^+_{E,T}$) 
in $\Omega_T$, according to Definition \ref{d2}, where $J_{E,S}$ (or $J^+_{E,S}$) 
is defined in terms of $A_{T}$ and the $ q_{T,\pm}$, i.e.,
\begin{equation}\label{Jx0}
 J_{E,S}(v) = \int \left\langle A_{T}(y)\nabla v(y), \nabla v(y)\right\rangle 
 + q_{T,+}^2(y)\chi_{\{v > 0\}}(y) + q_{T,-}^2(y)\chi_{\{v < 0\}}(y)\ dy,
  \end{equation}
and $\tilde{\kappa} =  \kappa |\det T|$.
\end{lemma}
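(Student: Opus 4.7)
The entire argument hinges on one structural observation: the ellipsoids in Definition \ref{d2} are \emph{covariant} under affine maps. Specifically, I claim that if $y = T(x)$, then
\[
T\bigl(E_x(x,r)\bigr) = E^T_y(y,r),
\]
where on the right I denote by $E^T$ the ellipsoid family built from $A_T$, namely $E^T_y(y,r) = \{w : (w-y)^t A_T^{-1}(y)(w-y) < r^2\}$. To verify this, I would unwrap the definitions: $E_x(x,r) = \{z : (z-x)^t A^{-1}(x)(z-x) < r^2\}$, and substituting $w = T(z) = Sz+z_0$ so that $z - x = S^{-1}(w-y)$ gives $(w-y)^t S^{-t} A^{-1}(x) S^{-1}(w-y) < r^2$. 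Since $A_T(y) = SA(T^{-1}y)S^t = SA(x)S^t$, we have $A_T^{-1}(y) = S^{-t}A^{-1}(x)S^{-1}$, and the two descriptions match exactly. Equally importantly, $T$ sends $\partial E_x(x,r)$ to $\partial E^T_y(y,r)$.

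The rest of the proof is a direct change of variables. Given an ellipsoid $\overline{E^T_y(y,r)} \subset \Omega_T$ and any admissible competitor $v$ with $v = u_T$ on $\partial E^T_y(y,r)$, I would set $x = T^{-1}(y)$ and define the pullback $\tilde v(z) := v(T(z))$ on $E_x(x,r) = T^{-1}(E^T_y(y,r))$. Since $u_T \circ T = u$, on $\partial E_x(x,r)$ one has $\tilde v(z) = v(T(z)) = u_T(T(z)) = u(z)$, so $\tilde v$ is a legitimate competitor for $u$ on $E_x(x,r)$ (the $L^1$ and $L^2_{\text{loc}}$ requirements transfer because $S$ is bijective).

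I would then compute $J_{E,x,r}(\tilde v)$ by substituting $w = T(z)$, using $dz = |\det T|^{-1} dw$ and $\nabla \tilde v(z) = S^t \nabla v(T(z))$. The quadratic term transforms as
\[
\langle A(z)\nabla \tilde v(z), \nabla \tilde v(z)\rangle = \langle SA(T^{-1}w)S^t \nabla v(w), \nabla v(w)\rangle = \langle A_T(w)\nabla v(w), \nabla v(w)\rangle,
\]
while $q_\pm^2(z) = q_{T,\pm}^2(w)$ and $\chi_{\{\tilde v > 0\}}(z) = \chi_{\{v>0\}}(w)$ (similarly for the negative phase). Collecting everything yields $J_{E,x,r}(\tilde v) = |\det T|^{-1} J_{E,T,y,r}(v)$, and identically $J_{E,x,r}(u) = |\det T|^{-1} J_{E,T,y,r}(u_T)$.

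Finally I would invoke the almost-minimization property of $u$, which gives $J_{E,x,r}(u) \leq J_{E,x,r}(\tilde v) + \kappa r^{n+\alpha}$, and multiply through by $|\det T|$ to obtain
\[
J_{E,T,y,r}(u_T) \leq J_{E,T,y,r}(v) + \kappa |\det T|\, r^{n+\alpha},
\]
which is exactly the claim with $\tilde\kappa = \kappa |\det T|$. The only nontrivial step is the ellipsoid covariance identity at the beginning; once that is in hand, the remainder is routine change-of-variables bookkeeping, and the fact that the radius $r$ is preserved (rather than being rescaled by some factor depending on $S$) is precisely what makes the error exponent $n+\alpha$ come out unchanged.
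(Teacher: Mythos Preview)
Your proposal is correct and follows essentially the same route as the paper: pull back an admissible competitor for $u_T$ on an $A_T$-ellipsoid to a competitor for $u$ on the corresponding $A$-ellipsoid, verify that the change of variables transforms the functional by the factor $|\det T|^{-1}$, and then multiply the almost-minimality inequality through by $|\det T|$. Your verification of the ellipsoid covariance via the quadratic-form description $\{w:(w-y)^tA_T^{-1}(y)(w-y)<r^2\}$ is slightly more direct than the paper's argument (which factors through the map $S A^{1/2}(x')$), but the substance is identical.
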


 \begin{remark}\label{rem: changingvariables} 
 Lemma \ref{lem: changingvariables} says that under an affine change of variables, almost minimizers are transformed to almost minimizers for a modified functional.  
 Its proof will also show why our second definition of almost minimizers is natural. 
 But it will be applied almost exclusively in the following circumstances: we pick
 $x \in  \Omega$, and we take $S = A^{-1/2}(x)$. In this case,
 $T(y) = x + S(y-x)$, we recognize the affine mapping $T_{x}$ from \eqref{defTE}, and
 then $u_T = u_x$ and $A_T(y) = A_x(y)$ (from \eqref{defux}).
 The advantage is that $A_T(x_0) = I$ and we can use simpler competitors.
 \end{remark}

\begin{proof}
We do the proof for $J^+$; the argument for $J$ would be the same.
Let $u$, $T$, and $u_T$ be as in the statement, then let $E_T$ be the corresponding ellipsoid and 
$v_T\in L^1(E_T)$ define a competitor for $u_T$ as in Definition \ref{d2};
thus $\overline E_T \subset \Omega_T$, $\nabla v_T\in L^2(E_T)$,  and $v_T=u_T$ on $\p E_T$.
We want to use $v_T$ to define a competitor $v$ for $u$, and naturally we take 
$v(y) = v_T(T(y))$ for $y\in E = T^{-1}(E_T)$. 

Notice that $\overline T^{-1}(E_T) \subset \Omega$ because $\overline E_T \subset \Omega_T = T(\Omega)$.
Moreover, \eqref{defTE} and \eqref{Ax0bis} say that $E_T - x$ is the image of $B(0,r)$ by
the linear mapping $A_T^{1/2} = SA^{1/2}(x')$, where $x'=T^{-1}(x)$. Then 
$T^{-1}(E_T) - x'$ is the image of $B(0,r)$ by $S^{-1}SA^{1/2}(x') = A^{1/2}(x')$.
In other words, $E = T^{-1}(E_T)$ is the ellipsoid associated to $x'$ and our initial function $A$, 
as in \eqref{defTE}, and we can apply Definition \ref{d2} to $v$. It is clear that $v=u$ on $\p E$,
and $\nabla v\in L^2(E)$ because the differential is $Dv(z) = Dv(T(z))S$ (and you transpose to get the gradients).
Now we compute, setting $y=T(z)$ and eventually changing variables,
\begin{align}\label{cvg}
J^+_{E,x',r}(v) &= \int_{E} \left\langle A\nabla v,\nabla v\right\rangle + q^2_+ \, \chi_{\{v>0\}} dz
= \int_{E} \left\langle A(z)S^t \nabla v(y), S^t\nabla v(y)\right\rangle + q^2_+ \, \chi_{\{v>0\}}(z) dz
\nonumber\\
&=\int_{E} \left\langle A_T(y)\nabla v(y), \nabla v(y)\right\rangle + q^2_+ \, \chi_{\{v_T>0\}}(y) dz
\nonumber\\
&=|\det(T)| \int_{E_T} \left\langle A_T(y)\nabla v(y), \nabla v(y)\right\rangle + q^2_+ \, \chi_{\{v_T>0\}}(y) dy,
\end{align}
which is the analogue (call it $J_{E_T}(v_T)$)  of $J^+_{E,x,r}$ for $v_T$ on $E_T$. 
We have a similar formula for 
$J^+_{E,x',r}(u)$, and since $J^+_{E,x',r}(u) \leq J^+_{E,x',r}(v) + \kappa r^{n+\alpha}$
by \eqref{eqn:amj2}, we get that $J^+_{E_T}(u_T) \leq J^+_{E_T}(v_T) + |\det(T)| \kappa r^{n+\alpha}$.
Lemma \ref{lem: changingvariables} follows.
\end{proof}

 In the analysis below we are working entirely locally within $\Omega$ and are unconcerned with the precise dependence of our regularity on $\kappa$ and $\alpha$. Therefore, we will sometimes make the {\it a priori} assumption (justified by the analysis above) that for a given point $x_0 \in \Omega$ we have $A(x_0) = I$. When it is necessary to compare different points in $\Omega$, we will explicitly use the rescaled functions defined above.

Whenever we write $C$, we mean a constant (which might change from line to line) that depends on $n$, $\lambda$, $\Lambda$,  $||q_{\pm}||_{L^{\infty}}$, $\alpha$ and on upper bounds for $||A||_{C^{0,\alpha}}$, and $\kappa$.

\subsection{``Additive" Almost-Minimizers}\label{S:addmult}

 Let us now address the differences between our definition of almost minimizers, with 
\eqref{eqn:amj} or  \eqref{eqn:amj2}, and the definition of an almost-minimizer in \cite{davidtoroalmostminimizers}. 
Recall that when $A=I$, 
being an almost minimizer for $J_E$ is equivalent to being an almost minimizer for $J_B$, and that in  \cite{davidtoroalmostminimizers} (with $A=I$) $u$ was an almost-minimizer for $J_E$ if, instead of satisfying \eqref{eqn:amj2} for all admissible $v$, 
it satisfied 
\begin{equation}\label{eqn:amjmult} 
J_{E,x,r}(u) \leq (1+\kappa r^\alpha) J_{E,x,r}(v), 
\end{equation} 
(and similarly for $J^+_E$).  
Here we consider variable $A$ and stick to $J_E$ (but $J_B$ would work the same way).
Let almost minimizers in the sense of \eqref{eqn:amj2} be {\bf additive almost minimizers}, 
whereas almost-minimizers in the sense of \eqref{eqn:amjmult} are {\bf multiplicative} almost-minimizers. 
Our goal is to prove results for additive minimizers, first showing that multiplicative almost minimizers are also additive almost minimizers. To obtain this result we first need to show that multiplicative almost minimizers, in the variable coefficient setting, obey a certain decay property. This will be done in the next Lemma. With this result in hand, we will then show that every multiplicative almost minimizer is actually an additive almost minimizer,
therefore reducing our analysis to the case of additive minimizers. 
 
 \begin{lemma}\label{L:logdecay} 
 Let $u$ be a multiplicative almost minimizer for $J_E$ in $\Omega$. 
 Then there exists a constant $C>0$ such that if $x\in\Omega$ and $r>0$ are such that 
 $\overline E_x(x,r) \subset \Omega$, 
 then for $0<s\le r$,
 \begin{equation}\label{multiplicativeam}
 \left(\fint_{E_x(x,s)}|\nabla u|^2\right)^{1/2}\le C\left(\fint_{E_x(x,r)}|\nabla u|^2\right)^{1/2}+C\log(r/s).
 \end{equation}
\end{lemma}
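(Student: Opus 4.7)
The plan relies on two ingredients: a coordinate normalization that turns ellipsoids into balls and makes the coefficient matrix close to the identity at small scales, and a comparison between $u$ and its $A$-harmonic replacement, combined with the (approximate) subharmonicity of $|\nabla v|^2$ for $A$-harmonic $v$.

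\textbf{Step 1 (Normalization).} I apply Lemma \ref{lem: changingvariables} with $T = T_x$ to reduce to the case in which the base point $x$ satisfies $A(x) = I$. After this reduction the ellipsoid $E_x(x,\rho)$ is the Euclidean ball $B(x,\rho)$, and the transformed matrix (still called $A$) satisfies $A(x) = I$ and $|A(y) - I| \leq C|y-x|^\alpha$ on balls around $x$; the multiplicative almost-minimality is preserved with a modified constant. It suffices to prove the stated inequality in these normalized coordinates.

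\textbf{Step 2 (Comparison with the $A$-harmonic extension).} For each $\rho$ with $\overline{B}(x,\rho) \subset \Omega$, let $v = v_\rho$ solve the Dirichlet problem $\mathrm{div}(A\nabla v) = 0$ in $B(x,\rho)$ with $v = u$ on $\partial B(x,\rho)$. Using $v$ as a competitor in \eqref{eqn:amjmult}, the orthogonality identity
\[
\int_{B(x,\rho)} \langle A\nabla u, \nabla u\rangle = \int_{B(x,\rho)} \langle A\nabla v, \nabla v\rangle + \int_{B(x,\rho)} \langle A\nabla(u-v), \nabla(u-v)\rangle,
\]
and the pointwise bound $q_\pm^2 \leq \|q_\pm\|_\infty^2$, I obtain
\[
\int_{B(x,\rho)} |\nabla(u-v)|^2 \leq C\rho^\alpha \int_{B(x,\rho)} |\nabla u|^2 + C\rho^n.
\]

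\textbf{Step 3 (Approximate subharmonicity of $|\nabla v|^2$).} Since $|\nabla \tilde v|^2$ is subharmonic for any function $\tilde v$ harmonic for the Laplacian, and $A$ is $C^{0,\alpha}$-close to $I$ on $B(x,\rho)$, a comparison between $v$ and the harmonic extension of its boundary data yields
\[
\fint_{B(x,s)} |\nabla v|^2 \leq (1 + C\rho^\alpha) \fint_{B(x,\rho)} |\nabla v|^2 \leq (1 + C\rho^\alpha) \fint_{B(x,\rho)} |\nabla u|^2
\]
for all $s \leq \rho$; the last inequality uses the minimality of $v$ for the bilinear form together with the ellipticity bounds, which in the normalized coordinates contribute a factor of $1 + O(\rho^\alpha)$.

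\textbf{Step 4 (Single-step estimate).} Combining Steps 2 and 3 via the $L^2$ triangle inequality yields, writing $a(\rho) = (\fint_{B(x,\rho)} |\nabla u|^2)^{1/2}$ and using that $|B(x,\rho)|/|B(x,s)| = (\rho/s)^n$,
\[
a(s) \leq (1 + C\rho^{\alpha/2})\, a(\rho) + (\rho/s)^{n/2}\bigl(C\rho^{\alpha/2}\, a(\rho) + C\bigr).
\]
Specializing to $s = \rho/2$ gives the key one-step bound
\[
a(\rho/2) \leq \bigl(1 + C\rho^{\alpha/2}\bigr)\, a(\rho) + C.
\]

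\textbf{Step 5 (Iteration).} Iterating over dyadic scales $\rho_k = r/2^k$ for $k = 0,1,\dots,N$ with $N = \lceil \log_2(r/s)\rceil$, I get
\[
a(r/2^N) \leq \Bigl(\prod_{k=0}^{N-1}(1+C(r/2^k)^{\alpha/2})\Bigr)\, a(r) + C\sum_{j=0}^{N-1} \prod_{l=j+1}^{N-1}(1+C(r/2^l)^{\alpha/2}).
\]
The product is bounded by $\exp\bigl(C'\sum_{k\ge 0} 2^{-k\alpha/2} r^{\alpha/2}\bigr) \leq C$, and the sum is bounded by $CN \leq C(1 + \log_2(r/s))$. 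A standard monotonicity argument in $s$ handles the case when $s$ is not a dyadic submultiple of $r$, and one recovers the stated bound \eqref{multiplicativeam}.

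\textbf{Main obstacle.} The crux is Step 4: making the coefficient of $a(\rho)$ in the one-step bound equal to $1 + o(1)$ rather than a constant strictly greater than $1$. A naive application of Young's inequality or $L^2$ triangle with the trivial gradient estimate $\fint_{B(s)}|\nabla v|^2 \leq C \fint_{B(\rho)}|\nabla v|^2$ yields only $a(\rho/2) \leq C a(\rho) + C$ with $C > 1$, which iterates to a polynomial bound in $r/s$ rather than logarithmic. The approximate subharmonicity in Step 3, together with the normalization $A(x) = I$ so that the relative gap $\sqrt{\Lambda/\lambda}$ on $B(x,\rho)$ is $1 + O(\rho^\alpha)$, is precisely what is needed to drive the multiplier to $1 + O(\rho^{\alpha/2})$ and thus to produce the logarithmic growth upon iteration.
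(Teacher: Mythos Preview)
Your plan is essentially the paper's proof: normalize via $T_x$ so that $A(x)=I$ and ellipsoids become balls, compare $u$ to a harmonic-type replacement on $B(x,\rho)$, use (approximate) subharmonicity of $|\nabla|^2$ to descend to smaller balls with multiplier $1+O(\rho^{\alpha/2})$, then iterate dyadically to get the $\log(r/s)$ bound. The one difference is the choice of replacement: the paper takes $(u_x)_\rho^\ast$ to be the \emph{Laplacian}-harmonic extension of $u_x$, which makes $|\nabla (u_x)_\rho^\ast|^2$ exactly subharmonic and pushes all the $|A-I|\le C\rho^\alpha$ error into the energy comparison (your Step~2), so only one auxiliary function is needed. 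Your route through the $A$-harmonic $v$ is also valid, but be aware that Step~3 as stated is a little too strong: comparing $v$ with the Laplacian-harmonic extension $\tilde v$ of its boundary data gives
\[
\Big(\fint_{B(x,s)}|\nabla v|^2\Big)^{1/2}\le \Big(1+C(\rho/s)^{n/2}\rho^{\alpha/2}\Big)\Big(\fint_{B(x,\rho)}|\nabla v|^2\Big)^{1/2},
\]
i.e.\ the correction depends on $\rho/s$; this is harmless once you specialize to $s=\rho/2$ in Step~4, but the uniform-in-$s$ claim would require more.
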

 
 \begin{proof} Our assumption that $\overline E(x,r) \subset \Omega$ allows us to define  
 $u_x$ as in \eqref{defux}. Note that $u_x\in H^{1/2}(\partial B(x,s))$.
Denote by $(u_{x})_s^*$ 
 the function in $L^1(B(x,s))$ with $\nabla (u_{x})_s^*\in L^2(B(x,s))$, trace 
$u_{x}$ on $\partial B(x,s) = T_x(\partial E_x(x,s))$, and 
which minimizes the Dirichlet energy on $B(x,s)$. 
Note that $u_s^*$ is the harmonic 
extension of $(u_x)|_{\partial B(x,s)}$.  
The minimality of $u_s^*$ implies that for any $t\in\R$,
 \[
 \int_{B(x,s)}|\nabla(u_{x})_s^*|^2\le \int_{B(x,s)}|\nabla ((u_{x})_s^*+t(u_{x}-(u_{x})_s^*))|^2.
 \]
 Expanding near $t=0$ we obtain $\int_{B(x,s)}\langle \nabla u_{x}-\nabla (u_{x})_s^*,\nabla (u_{x})_s^*\rangle=0$, hence
 \begin{equation}\label{2.3mult}
 \int_{B(x,s)}|\nabla (u_{x})_s^*|^2=\int_{B(x,s)}\langle \nabla u_{x},\nabla (u_{x})_s^*\rangle.
 \end{equation}
 Since $(u_x)_s^*\circ T_x\in L^2(E_x(x,s))$ and its trace is equal to $u$ on $\partial E_x(x,s)$, 
 \eqref{defux}, the almost minimality of $u$ and the same computation as in \eqref{cvg} 
 (in fact, we are in the situation of Remark \ref{rem: changingvariables}) yield
 \[
  \begin{aligned}
  \text{det}&A^{1/2}(x) 
  \int_{B(x,s)}\langle A_x\nabla u_x,\nabla u_x\rangle+(q_x)_+^2\chi_{\{u_x>0\}}+(q_x)_-^2\chi_{\{u_x<0\}}\\
  &= \int_{E_x(x,s)}\langle A\nabla u,\nabla u\rangle +q_+^2\chi_{\{u>0\}}+q_-^2\chi_{\{u<0\}}\\
  &\le (1+\kappa s^{\alpha})\int_{E_x(x,s)}\langle A\nabla ((u_x)_s^*\circ T_x),\nabla ((u_x)_s^*\circ T_x)\rangle +q_+^2\chi_{\{(u_x)_s^*\circ T_x>0\}}+q_-^2\chi_{\{((u_x)_s^*\circ T_x)<0\}}\\
 &=(1+\kappa s^{\alpha})\text{det}A^{1/2}(x) \int_{B(x,s)}\langle A_x\nabla (u_x)_s^*,\nabla (u_x)_s^*\rangle+(q_x)_+^2\chi_{\{(u_x)_s^*>0\}}+(q_x)_-^2\chi_{\{(u_x)_s^*<0\}}.
\end{aligned}
\]
 Consequently,
 \begin{align}\label{well}
 \int_{B(x,s)}&\langle A_x\nabla u_x,\nabla u_x\rangle+(q_x)_+^2\chi_{\{u_x>0\}}+(q_x)_-^2\chi_{\{u_x<0\}}
 \nonumber \\
 &\le(1+\kappa s^{\alpha}) \int_{B(x,s)}\langle A_x\nabla (u_x)_s^*,\nabla (u_x)_s^*\rangle+(q_x)_+^2\chi_{\{(u_x)_s^*>0\}}+(q_x)_-^2\chi_{\{(u_x)_s^*<0\}}
 \nonumber \\ 
 &\le  C s^n + (1+\kappa s^{\alpha}) \int_{B(x,s)}\langle A_x\nabla (u_x)_s^*,\nabla (u_x)_s^*\rangle 
 \end{align} 
where $C$ depends on the $||q_{\pm}||_\infty$ and an upper bound for $\kappa s^{\alpha}$.
Observe that for $z\in B(x,s)$, 
 \begin{align}\label{1.25g}
 |I-A_x(y)| &= |A_x(x)-A_x(y)| = |A^{-1/2}(x)[A(T_x^{-1}(x))-A(T_x^{-1}(y))]A^{-1/2}(x)|
 \nonumber \\&
 \leq \lambda^{-1}|x-y|^\alpha ||A||_{C^\alpha}  \leq  C s^\alpha
 \end{align} 
 by \eqref{defux} (twice). Then by \eqref{2.3mult}, \eqref{well}, \eqref{1.25g} (twice), 
 \begin{align}
 \int_{B(x,s)}&|\nabla u_x-\nabla (u_x)_s^*|^2
 =\int_{B(x,s)}|\nabla u_x|^2 - \int_{B(x,s)}|\nabla (u_x)_s^*|^2
 \nonumber\\
 &\le \int_{B(x,s)}\langle (I-A_x) 
 \nabla u_x,\nabla u_x\rangle
 +\int_{B(x,s)}\langle A_x\nabla u_x,\nabla u_x\rangle-\int_{B(x,s)}|\nabla (u_x)_s^*|^2
 \nonumber\\
 &\le Cs^{\alpha}\int_{B(x,s)}|\nabla u_x|^2-\int_{B(x,s)}|\nabla (u_x)_s^*|^2+\int_{B(x,s)}\langle A_x\nabla u_x,\nabla u_x\rangle\nonumber\\
 &\le Cs^{\alpha}\int_{B(x,s)}|\nabla u_x|^2-\int_{B(x,s)}|\nabla (u_x)_s^*|^2
+ C s^n + (1+\kappa s^{\alpha}) \int_{B(x,s)}\langle A_x\nabla (u_x)_s^*,\nabla (u_x)_s^*\rangle
\nonumber \\ 
 &\le Cs^{\alpha}\int_{B(x,s)}|\nabla u_x|^2-\int_{B(x,s)}|\nabla (u_x)_s^*|^2
 + C s^n
 +(1+\kappa s^{\alpha})(1+s^{\alpha}) 
  \int_{B(x,s)}|\nabla (u_x)_s^*|^2
 \nonumber\\
&\le Cs^{\alpha}\int_{B(x,s)}|\nabla u_x|^2+Cs^n + C s^{\alpha} \int_{B(x,s)}|\nabla (u_x)_s^*|^2
 \leq Cs^{\alpha}\int_{B(x,s)}|\nabla u_x|^2+Cs^n
\nonumber\\&\label{2.4mult} 
\end{align}
where we finished with the minimality of $(u_x)_s^*$.

Applying \eqref{2.4mult} to $s=r$ yields
 \begin{equation}\label{2.5mult}
 \int_{B(x,r)}|\nabla u_x-\nabla (u_x)_r^*|^2\le C r^{\alpha}\int_{B(x,r)}|\nabla u_x|^2 + Cr^n.
 \end{equation}
 We may now follow the computations in \cite{davidtoroalmostminimizers}, to which we refer for additional detail.
 Set
 \begin{equation}\label{2.6mult}
 \omega(u_x,x,s)=\left(\fint_{B(x,s)}|\nabla u_x|^2\right)^{1/2}.
 \end{equation}
 Since $(u_x)_r^*$ is energy minimizer, it is harmonic in $B(x,r)$, therefore $|\nabla (u_x)_r^*|^2$ is subharmonic. We obtain 
 \begin{equation}\label{2.7mult}
\fint_{B(x,s)}|\nabla (u_x)_r^*|^2 \le \fint_{B(x,r)}|\nabla (u_x)_r^*|^2.
 \end{equation}
 By the triangle inequality in $L^2$, \eqref{2.5mult}, \eqref{2.6mult} and \eqref{2.7mult}, we obtain, as in \cite{davidtoroalmostminimizers},
 \begin{align}
 \omega(u_x,x,s) 
& \le \left(1+C\left(\frac{r}{s}\right)^{n/2}r^{\alpha/2}\right)\omega(u_x,x,r)+C\left(\frac{r}{s}\right)^{n/2}.\label{2.8mult}
 \end{align}
 Setting $r_j=2^{-j}r$, for $j\ge 0$, \eqref{2.8mult} gives
 \begin{equation}\label{2.9mult}
 \omega(u_x,x,r_{j+1})\le (1+C2^{n/2}r_j^{\alpha/2})\omega(u_x,x,r_j)+C2^{n/2},
 \end{equation}
 and an iteration yields
 \begin{align}
 \omega(u_x,x,r_{j+1})&\le \omega(u_x,x,r)\prod_{l=0}^{j}(1+C2^{n/2}r_l^{\alpha/2})
 +C\sum_{l=1}^{j+1}\left(\prod_{k=l}^j (1+C2^{n/2}r_k^{\alpha/2})\right)2^{n/2}
 \nonumber\\
 &\le \omega(u_x,x,r)P+CP2^{n/2}j\le C\omega(u_x,x,r)+Cj,\label{2.10mult}
 \end{align} 
where $P=\prod_{j=0}^{\infty}(1+C2^{n/2}r_j^{\alpha/2})$ and we used the fact that $P$ is bounded, 
depending only on an upper bound for $r$. 
As in \cite{davidtoroalmostminimizers}, this implies that if $\overline E_x(x,r) \subset \Omega$,
then for $0<s\le r$,
 \begin{equation}\label{2.11mult}
 \omega(u_x,x,s)\le C\omega(u_x,x,r)+C\log(r/s).
 \end{equation}
 Since 
 \begin{equation}\label{tt-3}
 \omega(u_x,x,r)\le C\left(\fint_{E_x(x,r)}|\nabla u|^2\right)^{1/2}\hbox{ and }\left(\fint_{E_x(x,s)}|\nabla u|^2\right)^{1/2}\le C\omega(u_x,x,s),
 \end{equation} 
  we obtain, for $0<s\le r$
 \begin{equation}\label{need}
\left(\fint_{E_x(x,s)}|\nabla u|^2\right)^{1/2}\le C\left(\fint_{E_x(x,r)}|\nabla u|^2\right)^{1/2}+C\log(r/s).
 \end{equation}
  \end{proof}

\begin{lemma}\label{lem:comparemintypes}
Let $u$ be a multiplicative almost-minimizer of $J_E$ in $\Omega$ with constant $\kappa$ and exponent $\alpha$, 
and let $\wt\Omega \subset\subset \Omega$ be an open subset of $\Omega$ whose closure is a compact 
subset of $\Omega$. Then $u$ is an additive almost minimizer of $J_E$ in $\wt\Omega$, 
with exponent $\alpha/2$ and a constant $\wt{\kappa}$ that depends on the constants for $J$, $u$ and $\wt\Omega$.
\end{lemma}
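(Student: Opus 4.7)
The plan is to reduce to comparing $u$ only with competitors $v$ whose energy is not already larger than that of $u$, and then to absorb the loss factor $\kappa r^\alpha$ into an additive term of the form $\wt\kappa\, r^{n+\alpha/2}$ by means of the a priori bound on $J_{E,x,r}(u)$ furnished by Lemma \ref{L:logdecay}. The loss from $\alpha$ to $\alpha/2$ will appear naturally when we absorb the logarithmic factor coming from \eqref{need}.

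First I would fix $r_{0}>0$ small enough (depending on $\wt\Omega$ and $\Omega$) so that $\overline{E}_{x}(x,r_{0}) \subset \Omega$ for every $x \in \wt\Omega$, and so that all these ellipsoids lie in a fixed compact set $K \subset \Omega$. Since $u \in K_{\loc}(\Omega, E)$, we have $\int_{K}|\nabla u|^{2} \le M < \infty$, and hence $\fint_{E_{x}(x,r_{0})}|\nabla u|^{2} \le C(u,\wt\Omega,r_{0})$ uniformly in $x \in \wt\Omega$. Applying Lemma \ref{L:logdecay} with radius $r_{0}$ and any $s \le r_{0}$ and squaring yields
\begin{equation*}
\fint_{E_{x}(x,s)}|\nabla u|^{2} \le C\bigl(1 + \log^{2}(r_{0}/s)\bigr),
\end{equation*}
so that, using $q_{\pm}\in L^\infty$,
\begin{equation*}
J_{E,x,s}(u) \le C s^{n}\bigl(1 + \log^{2}(r_{0}/s)\bigr) + Cs^{n}
\le C s^{n}\bigl(1 + \log^{2}(r_{0}/s)\bigr).
\end{equation*}

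Now let $x\in\wt\Omega$, let $r$ be such that $\overline{E}_{x}(x,r)\subset\wt\Omega$, and let $v$ be an admissible competitor for $u$ on $E_{x}(x,r)$. If $J_{E,x,r}(v)\ge J_{E,x,r}(u)$, the additive inequality holds trivially, so I may assume $J_{E,x,r}(v) < J_{E,x,r}(u)$. The multiplicative almost-minimality then gives
\begin{equation*}
J_{E,x,r}(u) - J_{E,x,r}(v) \le \kappa r^{\alpha} J_{E,x,r}(v) \le \kappa r^{\alpha} J_{E,x,r}(u).
\end{equation*}
For $r\le r_{0}$ the bound above yields
\begin{equation*}
\kappa r^{\alpha} J_{E,x,r}(u) \le C r^{n+\alpha}\bigl(1 + \log^{2}(r_{0}/r)\bigr)
\le \wt\kappa\, r^{n+\alpha/2},
\end{equation*}
where the second inequality holds because $t^{\alpha/2}\log^{2}(1/t)$ is bounded on $(0,1]$; this is exactly where the exponent drops from $\alpha$ to $\alpha/2$. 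For $r$ in the remaining range (bounded above by the diameter of $\wt\Omega$), we can simply enlarge $\wt\kappa$ so that $\wt\kappa\, r^{n+\alpha/2} \ge \kappa r^{\alpha} J_{E,x,r}(u)$, using that $J_{E,x,r}(u)$ is controlled by $M + C|\wt\Omega|$ on that range.

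The main point to be careful about is the uniform in $x$ upper bound for $\fint_{E_{x}(x,r_{0})}|\nabla u|^{2}$: this is where the hypothesis $\wt\Omega\subset\subset\Omega$ enters, giving a fixed scale $r_{0}$ and a fixed compact set containing all ellipsoids, so that Lemma \ref{L:logdecay} applies with uniform constants. Everything else is bookkeeping; the dichotomy on $J_{E,x,r}(v)$ versus $J_{E,x,r}(u)$ is the trick that converts a multiplicative inequality into an additive one in a universally clean way.
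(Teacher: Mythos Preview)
Your proof is correct and follows essentially the same route as the paper: use Lemma~\ref{L:logdecay} to get the uniform bound $J_{E,x,r}(u)\le C r^n(1+\log^2(r_0/r))$, and then absorb the multiplicative error $\kappa r^\alpha J_{E,x,r}(v)$ into an additive $\wt\kappa\, r^{n+\alpha/2}$. Your explicit dichotomy on whether $J_{E,x,r}(v)\ge J_{E,x,r}(u)$ is in fact cleaner than the paper's presentation, which writes the needed bound as one on $J_{E,x,r}(v)$ but then only estimates $J_{E,x,r}(u)$ (implicitly relying on the same dichotomy you spell out).
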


\begin{proof}
Let $\Omega$, $u$, $\wt\Omega$, be as in the statement, and choose
$r_0 = \Lambda^{-1/2} \dist(\wt\Omega, \p\Omega)/2$, so small that $E_x(x,2r_0) \subset \Omega$ 
for $x\in \Omega$. We deduce from Lemma \ref{L:logdecay}, applied with $r=r_0$, that
\begin{equation}\label{log2}
\fint_{E_x(x,s)}|\nabla u|^2 \leq C + C |\log(s/r_0)|^2
\ \text{ for } 0 < s \leq r_0,
\end{equation}
where $C$ only depends on $\wt\Omega$ and $u$ through a bound for $\displaystyle{\fint_{E_x(x,r_0)}|\nabla u|^2}$
by $\displaystyle{r_0^{-n}\int_{(\wt\Omega,r_0)}|\nabla u|^2}$ where $(\wt\Omega,r_0)$, the $r_0$ neighborhood of $\wt\Omega$, is compactly contained in $\Omega$
by our choice of $r_0$.

Now let $x\in \Omega, r > 0$ be such that $\overline{E}_x(x,r)\subset \tilde{\Omega}$ and let 
$v$ be an admissible function, with $v = u$ on $\partial E_x(x,r)$; we know that
$$
J_{E,x,r}(u) \le(1+\kappa r^\alpha)J_{E,x,r}(v) 
$$
and so we just need to show that $\kappa r^\alpha J_{E,x,r}(v) \leq \wt\kappa r^{n+\alpha/2}$.
But by \eqref{eqn:jonballg} 
and \eqref{log2}
\begin{equation}\label{log3}
J_{E,x,r}(u) \leq \Lambda \int_{E_x(x,r)}|\nabla v|^2 + Cr^n ||q_+||_\infty + Cr^n ||q_-||_\infty
\leq C r^n + C r^n |\log(r/r_0)|^2 
\end{equation}
and the result follows easily; we could even have taken any given exponent $\wt \alpha < \alpha$.
\end{proof}

For the remainder of the paper we will work solely with additive almost-minimizers and refer to them simply as almost-minimizers.

\begin{remark}\label{equivam}
In \cite{STV} they consider the seemingly broader class of almost-minimizers defined by 
the inequality $$J_{x,r}(u) \leq (1+C_1r^\alpha)J_{x,r}(v) + C_2 r^{\alpha + n}.$$  In fact this definition is equivalent to our ``additive" almost-minimizers. To see this, first note that Lemmas \ref{L:logdecay} and \ref{lem:comparemintypes} hold with  
the same proofs if ``multiplicative" almost-minimizers are replaced by almost-minimizers of ``\cite{STV}-type". The only change is the presence of the (lower order) term $C_2 r^{\alpha + n}$ in \eqref{2.4mult} and 
\eqref{log3}.  
\end{remark}

\section{Continuity of Almost-Minimizers}\label{S:continuity}

Given the equivalence between almost minimizers of $J_B$ and $J_E$, we will omit the subscript. 
In this section we prove the continuity of almost minimizers for $J$ and $J^+$. 
Our arguments will follow very closely those of Theorem 2.1 in \cite{davidtoroalmostminimizers}. 
Despite this, we will prove Theorem~\ref{T:continuity} in complete detail, in order to highlight the differences in the variable coefficient setting. 

Furthermore, to 
ease notation we will refer only to $J$ in this section, with the understanding that $q_-$ might be identically zero and the 
functions we consider might be {\it a priori} non-negative.

\begin{theorem}\label{T:continuity} Almost minimizers of $J$ are continuous in $\Omega$. Moreover, if $u$ is an almost minimizer for $J$ and $\overline{B}(x_0,2r_0)\subset \Omega$ then there exists a constant $C>0$ 
such that 
for $x,y\in B(x_0,r_0)$
\begin{equation}\label{continuity}
|u(x)-u(y)|\le C|x-y|\left(1+\log\left(\frac{2r_0}{|x-y|}\right)\right).
\end{equation}
\end{theorem}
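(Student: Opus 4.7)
The plan is to adapt the proof of Theorem 2.1 of \cite{davidtoroalmostminimizers} to the variable-coefficient setting, using the affine change of variables from Lemma \ref{lem: changingvariables} to freeze coefficients at each point. The first step is to establish an analogue of Lemma \ref{L:logdecay} for our (additive) almost minimizers. The proof of Lemma \ref{L:logdecay}, although written there for multiplicative almost minimizers, goes through almost verbatim in the additive setting: the only use of the multiplicative hypothesis \eqref{eqn:amjmult} occurs in the step leading to \eqref{2.4mult}, and replacing it by the additive bound \eqref{eqn:amj2} merely removes the factor $(1+\kappa s^\alpha)$ and introduces a lower-order additive term $C\kappa s^{n+\alpha}$. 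Carrying this change through the computation still yields
$$
\int_{B(x,s)}|\nabla u_x-\nabla (u_x)_s^*|^2 \leq C s^{\alpha}\int_{B(x,s)}|\nabla u_x|^2 + C s^n,
$$
and the dyadic iteration that produced \eqref{2.11mult} and \eqref{need} then gives the same logarithmic decay for $u$.

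Next, given $x_0$ and $r_0$ with $\overline{B}(x_0,2r_0)\subset\Omega$, shrinking $r_0$ by a constant factor depending only on $\lambda$ and $\Lambda$ (via \eqref{tt-2}) ensures that $\overline E_y(y,r_0)\subset B(x_0,2r_0)$ for every $y\in B(x_0,r_0)$. Applying the additive analogue of \eqref{need} at such $y$, and combining with the $L^2$-bound for $\nabla u$ on $B(x_0,2r_0)$, I would obtain
$$
\fint_{B(y,s)}|\nabla u|^2 \leq C\bigl(1+\log(r_0/s)\bigr)^{2}
\qquad\text{for } y\in B(x_0,r_0) \text{ and } 0<s\leq r_0.
$$

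From this bound I would extract the log-Lipschitz modulus \eqref{continuity} by a standard Campanato-type argument. Writing $u_{y,s}=\fint_{B(y,s)}u$, Poincar\'e's inequality gives $\fint_{B(y,s)}|u-u_{y,s}|^2\leq Cs^2(1+\log(r_0/s))^2$, and hence $|u_{y,s_{k+1}}-u_{y,s_k}|\leq Cs_k(1+\log(r_0/s_k))$ along $s_k=2^{-k}r$. Summing the geometric series shows that $u$ admits a Lebesgue representative satisfying $|u(y)-u_{y,r}|\leq Cr(1+\log(r_0/r))$ for small $r$. For $x,y\in B(x_0,r_0)$ with $d=|x-y|$, taking $r=2d$ and comparing $u_{x,r}$ with $u_{y,r}$ by applying Poincar\'e on a common ball containing both $B(x,r)$ and $B(y,r)$ yields $|u_{x,r}-u_{y,r}|\leq Cd(1+\log(r_0/d))$, and the triangle inequality then produces \eqref{continuity}.

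The principal obstacle is the first step, namely verifying that the delicate energy-decay argument of Lemma \ref{L:logdecay} truly passes from multiplicative to additive almost minimization with only lower-order changes. Once this is in hand the remaining analysis is a classical Morrey--Campanato argument, and the H\"older continuity of $A$ poses no additional difficulty since it was already absorbed into the freezing step of Lemma \ref{L:logdecay} via the bound \eqref{1.25g}.
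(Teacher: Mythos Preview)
Your proposal is correct and follows essentially the same route as the paper: the paper also rederives the key energy-decay estimate $\int_{B(x,s)}|\nabla u_x-\nabla (u_x)_s^*|^2 \leq Cs^{\alpha}\int_{B(x,s)}|\nabla u_x|^2 + Cs^n$ directly for additive almost minimizers (see \eqref{gradientcomparison3}), iterates dyadically to get the logarithmic bound on $\omega$, and finishes with the same Poincar\'e/Campanato comparison of averages at nearby points. The only cosmetic difference is that the paper keeps track of the ellipsoid geometry $E_x(x,r_j)$, $E_y(y,r_j)$ explicitly when comparing averages at $x$ and $y$ (finding a common Euclidean ball $B_{xy}$ inside both ellipsoids), whereas you first pass to Euclidean balls via \eqref{tt-2} and then run a standard Campanato argument; both are equivalent.
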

 
\begin{proof} 
Let $u$ be an almost minimizer of $J$ in $\Omega$, and let $x\in\Omega$ and $0<r<1$ be such that 
$\overline{E}_x(x,r)\subset\Omega$. Define $u_x$ as in \eqref{defux}, and then 
for $0 < s \leq r$, let 
$u_s^*\in L^1(B(x,s))$ with
$\nabla u_s^*\in L^2(B(x,s))$, be the harmonic extension of  $u_x$ 
in $ B(x,s) = T_x( E_x(x,s))$. Recall that $u_s^*$ minimizes the Dirichlet
energy among all functions that coincide $u_x$ on  $ \partial B(x,s)$.
%

Let us assume for the moment that $A(x)=I$; this will simplify the computation, 
in particular because $E_x(x,r) = B(x,r)$ and $u_x=u$,
and then we will use Lemma \ref{lem: changingvariables} to reduce to that case.
Since $A\in C^{0,\alpha}(\Omega; \R^{n\times n})$, \eqref{2.3mult} yields 
\begin{align}
\int_{B(x,s)} |\nabla u-\nabla u_s^*|^2 &=\int_{B(x,s)}|\nabla u|^2-|\nabla u_s^*|^2\nonumber\\
&=\int_{B(x,s)}\langle (A(x)-A(y))\nabla u,\nabla u\rangle+\int_{B(x,s)}\langle A(y)\nabla u,\nabla u\rangle-\int_{B(x,s)}|\nabla u_s^*|^2\nonumber\\
&\le Cs^{\alpha}\int_{B(x,s)}|\nabla u|^2-\int_{B(x,s)}|\nabla u_s^*|^2
+\int_{B(x,s)}\langle A\nabla u,\nabla u\rangle,
\label{gradientcomparison}
\end{align}
where we used \eqref{1.25g} to control $|A(x)-A(y)|$.
Since $u$ is an almost minimizer and $q_{\pm}\in L^{\infty}$, 
\begin{align}
\int_{B(x,s)}\langle A\nabla u,\nabla u\rangle
&= J_{x,s}(u)-\int_{B(x,s)}q_+^2\chi_{\{u>0\}} +q_-^2\chi_{\{u<0\}} \leq J_{x,s}(u)
\le J_{x,s}(u_s^*)+\kappa s^{n+\alpha}
\nonumber\\
&\le \int_{B(x,s)}\langle A\nabla u_s^*,\nabla u_s^*\rangle +\kappa s^{n+\alpha}+Cs^n
\nonumber\\
&= \int_{B(x,s)}|\nabla u_s^*|^2
+\int_{B(x,s)}\langle (A(y)-A(x))\nabla u_s^*,\nabla u_s^*\rangle 
+\kappa s^{n+\alpha}+Cs^n\nonumber\\
&\leq (1+Cs^\alpha) \int_{B(x,s)}|\nabla u_s^*|^2 + \kappa s^{n+\alpha}+Cs^n,
\label{gradientcomparison2a}
\end{align}
by \eqref{1.25g} again. Hence by \eqref{gradientcomparison} and since  
$\int_{B(x,s)}|\nabla u_s^*|^2 \leq \int_{B(x,s)}|\nabla u_s|^2$ by definition of $u_s^*$,
\begin{align}
\int_{B(x,s)} |\nabla u-\nabla u_s^*|^2  
&\le Cs^{\alpha} \int_{B(x,s)}|\nabla u|^2+Cs^{\alpha}\int_{B(x,s)}|\nabla u_s^*|^2
+\kappa s^{n+\alpha}+Cs^n
\nonumber\\
&\leq Cs^{\alpha} \int_{B(x,s)}|\nabla u|^2 +Cs^n.
\label{gradientcomparison3}
\end{align}
In particular, when applied to $s=r$,
\begin{equation}\label{gradientcomparisonr}
\int_{B(x,r)}|\nabla u-\nabla u_r^*|^2\le C r^{\alpha}\int_{B(x,r)}|\nabla u|^2+Cr^n.
\end{equation}
For $s>0$ such that $B(x,s)\subset \Omega$, define as in \eqref{2.6mult}
\begin{equation}\label{w}
\omega(u,x,s):=\left( \fint_{B(x,s)} |\nabla u|^2\right)^{1/2}. 
\end{equation}
Since $u_r^*$ is harmonic in $B(x,r)$, $|\nabla u_r^*|^2$ is subharmonic and
for $s\le r$,
\begin{equation}\label{subharmonic}
\left(\fint_{B(x,s)}|\nabla u_r^*|^2\right)^{1/2}\le \left(\fint_{B(x,r)}|\nabla u_r^*|^2\right)^{1/2}\le \left(\fint_{B(x,r)}|\nabla u|^2\right)^{1/2}
\end{equation}

Combining \eqref{gradientcomparisonr}, \eqref{w} and \eqref{subharmonic} as in (2.10) in \cite{davidtoroalmostminimizers} and \eqref{2.10mult}, we obtain, for some $C>0$,
\begin{eqnarray}\label{firstw}
\omega(u,x,s)&\le & \left(\fint_{B(x,s)}|\nabla u- \nabla u_r^*|^2\right)^{1/2} + \left(\fint_{B(x,s)}|\nabla u_r^*|^2\right)^{1/2}\nonumber\\
&\le&\left(1+C\left(\frac{r}{s}\right)^{n/2}r^{\alpha/2}\right)\omega(u,x,r)+C\left(\frac{r}{s}\right)^{n/2}
\end{eqnarray}

Setting $r_j=2^{-j}r$ for $j\ge 0$, \eqref{firstw} implies
\[
\omega(u,x,r_{j+1})\le \left(1+C2^{n/2}r_j^{\alpha/2}\right)\omega(u,x,r_j) + C2^{n/2}.
\]
Iterating this as in (2.10) of \cite{davidtoroalmostminimizers} we obtain for $r_j=2^{-j}r$ for $j\ge 0$ that
\begin{equation}\label{iterationw}
\begin{aligned}
\omega(u,x,r_{j+1})
&\le P\omega(u,x,r)+CPj\le C\omega(u,x,r)+Cj,
\end{aligned}
\end{equation}
where $P=\prod_{j=0}^{\infty}(1+C2^{n/2}r_j^{\alpha/2})$ can be bounded depending on an upper bound for $r$.

As in (2.11) in \cite{davidtoroalmostminimizers}, this implies that if $\overline{B}(x,r)\subset \Omega$ and $A(x)=I$, then for $0<s\le r$,
\begin{equation}\label{logboundw}
\omega(u,x,s)\le C\omega(u,x,r)+C\log(r/s),
\end{equation}
where $C$ also depends on an upper bound for $r$.

\smallskip 
Now we use this to control the variations of $u$ near $x$
let $u_j=\fint_{B(x,r_j)}u$. The 
Poincar\'{e} inequality and \eqref{iterationw} yield
\begin{equation}\label{pcj}
\left(\fint_{B(x,r_j)}|u-u_j|^2\right)^{1/2}\le C r_j \omega(u,x,r_j)\le Cr_j\omega(u,x,r)+Cjr_j.
\end{equation}
If, in addition to the assumptions above, $x$ is a Lebesgue point of $u$, then
$u(x)=\lim\limits_{l\rightarrow\infty}u_l$ and we obtain, as in (2.13) from \cite{davidtoroalmostminimizers},
\begin{equation}\label{uaverage}
|u(x)-u_j| \le Cr_j(\omega(u,x,r)+j+1).
\end{equation}

\smallskip 
We may now return to the general case when $\overline E_x(x,r) \subset \Omega$
but maybe $A(x) \neq I$. In this case, Lemma \ref{lem: changingvariables}
and Remark \ref{rem: changingvariables} say that $u_x$ is an almost minimizer in the
domain $\Omega_x = T_x(\Omega)$, with the functional $J_x$ associated
to $A_x$ defined by \eqref{defux}, the same exponent $\alpha$ and 
$\wt\kappa = \det A(x)^{-1/2}\kappa$. This is good, because we can apply the argument above to
$u_x$ in $B(x,r) = T_x(E_x(x,r))$ as $A_x(x)=I$.
In particular we obtain that
\begin{equation}\label{tt-4}
\omega(u_x,x,s)\le C\left(\frac{r}{s}\right)^{n/2}r^\alpha \omega(u_x,x,r) +  C\left(\frac{r}{s}\right)^{n/2}
+\left(\fint_{B(x,s)}|\nabla (u_x)_r^*|^2\right)^{1/2},
\end{equation}
where $(u_x)_r^*$ is the harmonic extension of $u_x$ to $B(x,r)$.
 Since $u(x) = u_x(x)$ by \eqref{defTE}, we get that
\begin{equation}\label{uaverage2} 
|u(x)-u_{x,j}| = |u_x(x)-u_{x,j}| \le Cr_j(\omega(u_x,x,r)+j+1),
\end{equation}
where $u_{j,x}=\fint_{B(x,r_j)}u_x$, provided that $x$ is a Lebesgue point for $u_x$
(or, equivalently for $u$). 

\smallskip
For the continuity of $u$, we intend to apply this to Lebesgue points $x, y$ for $u$,
choose a correct $j$, and compare $u_{j,x}$ to $u_{j,y}$. This is possible if
$E_x(x,r_j) = T_x^{-1}(B(x,r_j))$ and $E_y(y,r_j) = T_x^{-1}(B(y,r_j))$ have a large intersection, 
so we need to pay attention to the size of balls.

Let $x_0\in \Omega$ and $r_0>0$ such that $\overline{B}(x_0,2r_0)\subset \Omega$ 
be given, and then let $x,y \in B(x_0,r_0)$ be given. Set $r = \Lambda^{-1/2} r_0$; this way
we are sure that $E_{x}(x,r) = T_x^{-1}(B(x,r)) \subset B(x, \Lambda^{1/2} r) \subset \overline{B}(x_0,2r_0)$
(see \eqref{defTE}), and since $u_x(y) = u(T_x^{-1}(u))$ by \eqref{defux},
\begin{align} \label{2.14g}
\omega(u_x,x,r)&:=\left(\fint_{B(x,r)}|\nabla u_x|^2\right)^{1/2} =\left(\fint_{E_x(x,r)}\langle A(x)\nabla u,\nabla u\rangle\right)^{1/2} \nonumber\\
& \le C\left(\fint_{B(x,\Lambda^{1/2}r)}|\nabla u|^2\right)^{1/2}
\leq C \left(\fint_{\overline B(x,2r_0)}|\nabla u|^2\right)^{1/2},
\end{align}
and we have a similar estimate for $\omega(u_y,y,r)$. Next assume that 
$|x-y| \leq \lambda^{1/2} r = \lambda^{1/2} \Lambda^{-1/2} r_0$. If this does not happen,
we need to take intermediate points and apply the estimates below to a string of such points.
Let $j$ be the largest integer such that $|x-y| \leq \lambda^{1/2} r_j$; we just made sure that
$j \geq 0$. Since $r_j=2^{-j}r$ then $|x-y| \geq \lambda^{1/2} r_j/2$.
Now $E_x(x,r_j) = T_x^{-1}(B(x,r_j))$ contains $B(x,\lambda^{1/2} r_j)$
and similarly $E_y(y,r_j)$ contains $B(y,\lambda^{1/2} r_j)$. Thus both sets contain the ball $B_{xy}$
centered at $(x+y)/2$ and with radius $\lambda^{1/2} r_j/2$, because $|x-y| \leq \lambda^{1/2} r_j$.
Set $m = \fint_{B_{xy}}u$; then
\begin{align}
|m-u_{j,x}| &\leq \fint_{B_{xy}}|u-u_{j,x}| \leq C \fint_{E_x(x,r_j)}|u-u_{j,x}| 
= C \fint_{B(x,r_j)}|u_x-u_{j,x}| 
\nonumber\\
&\leq C r_j \omega(u_x,x,r) + C j r_j
\leq C \left(\fint_{\overline B(x,2r_0)}|\nabla u|^2\right)^{1/2} r_j + C j r_j
\label{averageux}
\end{align}
because $B \subset E_x(x,r_j)$, by the change of variable suggested by \eqref{defTE} and \eqref{defux},
then by the Poincar\'e estimate \eqref{pcj} and \eqref{2.14g}.

We have a similar estimate for $|m-u_{j,y}|$, we compare them, and then use \eqref{uaverage2} to obtain
\begin{align}\label{2.16g}
|u(x)-u(y)| &\leq C r_j \left\{ \left(\fint_{\overline B(x,2r_0)}|\nabla u|^2\right)^{1/2} + j\right\}
\nonumber\\
&\leq C |x-y| \left\{ \left(\fint_{\overline B(x,2r_0)}|\nabla u|^2\right)^{1/2} 
+ \log\left(\frac{r_0}{|x-y|}\right)\right\}
\end{align}
for Lebesgue points $x,y \in B(x_0,r_0)$ such that $|x-y| \leq \lambda^{1/2} \Lambda^{-1/2} r_0$, 
and where $C$ depends on $\kappa, ||q_{\pm}||_{L^{\infty}(\Omega)}, \alpha, n$, an upper bound on $r$ and the $C^{0,\alpha}$ norm of $A$. By possibly modifying $u$ on a set
of measure 0, we get a continuous function that satisfies \eqref{continuity}.
\end{proof}

Here is a simple consequence of Theorem \ref{T:continuity}.
\begin{corollary}\label{C:2.1}
If $u$ is an almost minimizer for $J$, then for each compact $K\subset \Omega$, there exists a constant $C_K>0$ such that for $x,y\in K$,
\begin{equation}\label{2.21}
|u(x)-u(y)|\le C_K|x-y|\left(1+\left|\log\frac{1}{|x-y|}\right|\right).
\end{equation}
\end{corollary}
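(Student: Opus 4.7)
The plan is to deduce the corollary from Theorem \ref{T:continuity} by a standard compactness argument. The point is that \eqref{continuity} gives a pointwise estimate whose constant depends on the reference point $x_0$ (through $\fint_{\overline B(x_0,2r_0)}|\nabla u|^2$) and on $r_0$, but what is needed is a constant that is uniform over the compact set $K$.

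First I would observe that since $K \subset \Omega$ is compact and $\Omega$ is open, $\dist(K, \p\Omega) > 0$, so I can choose $r_0 \in (0, 1/4)$ small enough that $\overline{B}(x,2r_0) \subset \Omega$ for every $x \in K$. By Theorem \ref{T:continuity}, $u$ is continuous on $\Omega$ and hence bounded on $K$; set $M = \sup_K |u|$. Then I would cover $K$ by finitely many balls $B(x_i, r_0/2)$, $x_i \in K$, $i = 1, \ldots, N$, and extract a Lebesgue number $\delta \in (0, \lambda^{1/2}\Lambda^{-1/2} r_0)$ of this cover, so that any $x, y \in K$ with $|x-y| < \delta$ lie in a common ball $B(x_i, r_0)$.

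For such close-by pairs, Theorem \ref{T:continuity} applied with $x_0 = x_i$ yields
\[
|u(x) - u(y)| \leq C_i\, |x-y|\left(1 + \log\frac{2r_0}{|x-y|}\right),
\]
and since $2r_0 < 1$, the right-hand side is controlled by $C_i\, |x-y|\bigl(1 + |\log(1/|x-y|)|\bigr)$ up to an absolute constant; taking $C_K' = \max_i C_i$ handles this regime. For the remaining case $|x-y| \geq \delta$, the trivial bound $|u(x) - u(y)| \leq 2M$, combined with $|x-y|\bigl(1+|\log(1/|x-y|)|\bigr) \geq |x-y| \geq \delta$, gives $|u(x) - u(y)| \leq (2M/\delta)\,|x-y|\bigl(1 + |\log(1/|x-y|)|\bigr)$. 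Setting $C_K = \max(C_K',\, 2M/\delta)$ concludes the argument. There is no real obstacle here: the estimate is a routine consequence of Theorem \ref{T:continuity}, and the only minor point to check is the comparison of the log factor $\log(2r_0/|x-y|)$ with $|\log(1/|x-y|)|$, which is immediate from $2r_0 < 1$ when $|x-y|$ is small and from boundedness of $|x-y|$ on $K$ when $|x-y|$ is large.
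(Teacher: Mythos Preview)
Your argument is correct and is precisely the routine compactness argument the paper has in mind; indeed, the paper states the corollary as ``a simple consequence of Theorem~\ref{T:continuity}'' and omits the proof entirely. One minor remark: the extra constraint $\delta < \lambda^{1/2}\Lambda^{-1/2} r_0$ is unnecessary, since the statement \eqref{continuity} of Theorem~\ref{T:continuity} already holds for all $x,y\in B(x_0,r_0)$.
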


\section{Almost minimizers are $C^{1,\beta}$ in $\{u>0\}$ and in $\{u<0\}$}{\label{S:C1beta}
We first prove Lipschitz bounds away from the free boundary. Note that since $u$ is continuous,
$\{u>0\}$ and $\{u<0\}$ are open sets.

\begin{theorem}\label{T:Lippositive} Let $u$ be an almost minimizer for $J$ (or $J^+$) 
in $\Omega$. 
Then $u$ is locally Lipschitz in $\{u>0\}$ and in $\{u<0\}$.
\end{theorem}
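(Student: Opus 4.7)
The plan is to rerun the iteration of Theorem \ref{T:continuity}, exploiting the fact that on balls where both $u$ and its harmonic competitor are strictly positive, the $q_\pm^2$ contributions cancel exactly. This replaces the error term $Cs^n$ in \eqref{gradientcomparison3} by an error of order $s^{n+\alpha}$, which upgrades the logarithmic bound \eqref{continuity} on $\omega(u,x,s) = \left(\fint_{B(x,s)}|\nabla u|^2\right)^{1/2}$ to a \emph{uniform} bound; Lipschitz regularity then follows from Lebesgue differentiation.

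First I would fix $x_0\in\{u>0\}$ and use the continuity from Theorem \ref{T:continuity} to find $r_0>0$ and $c>0$ so that $u\geq c$ on a neighborhood of $\overline B(x_0,2r_0)$ that is large enough to contain every ellipsoid $E_x(x,r_0)$ with $x\in B(x_0,r_0)$ (using the inclusion $E_x(x,r)\subset B(x,\Lambda^{1/2}r)$ from \eqref{tt-2}). For such $x$ and $0<s\leq r_0$, apply Lemma \ref{lem: changingvariables} with $S=A^{-1/2}(x)$, so that $A_x(x)=I$ (cf.\ Remark \ref{rem: changingvariables}), and let $u_s^*$ be the standard harmonic extension of $u_x|_{\partial B(x,s)}$ to $B(x,s)=T_x(E_x(x,s))$. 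The maximum principle gives $u_s^*\geq \min_{\partial B(x,s)} u_x\geq c$ on $B(x,s)$, so both $u_x$ and $u_s^*$ are strictly positive there, and consequently
\[
\int_{B(x,s)}(q_x)_+^2\chi_{\{u_x>0\}} = \int_{B(x,s)}(q_x)_+^2 = \int_{B(x,s)}(q_x)_+^2\chi_{\{u_s^*>0\}},
\]
while both $(q_x)_-^2$ integrals vanish. Substituting this cancellation into the derivation \eqref{gradientcomparison}--\eqref{gradientcomparison3} replaces the $Cs^n$ error by $\kappa s^{n+\alpha}$, so I obtain the improved estimate
\[
\int_{B(x,s)}|\nabla u_x-\nabla u_s^*|^2 \leq Cs^\alpha \int_{B(x,s)}|\nabla u_x|^2 + Cs^{n+\alpha}.
\]

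With $r_j=2^{-j}r_0$, the triangle inequality together with the subharmonicity of $|\nabla u_s^*|^2$ (exactly as in the passage from \eqref{gradientcomparisonr} to \eqref{firstw}) then yields
\[
\omega(u_x,x,r_{j+1}) \leq (1+C\,2^{n/2}r_j^{\alpha/2})\,\omega(u_x,x,r_j) + C\,2^{n/2}r_j^{\alpha/2},
\]
whose only difference from the one-step inequality preceding \eqref{iterationw} is that the constant summand $C\,2^{n/2}$ has become the geometrically summable $C\,2^{n/2}r_j^{\alpha/2}$. Iterating as in \eqref{iterationw}, both the product $P=\prod_{j\geq 0}(1+C\,2^{n/2}r_j^{\alpha/2})$ and the sum $\sum_j r_j^{\alpha/2}$ converge, giving
\[
\omega(u_x,x,r_j) \leq C\,\omega(u_x,x,r_0)+C
\]
uniformly in $j$ and in $x\in B(x_0,r_0)$, with $\omega(u_x,x,r_0)$ controlled by Theorem \ref{T:continuity} together with the local $L^2$ bound on $\nabla u$. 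Lebesgue differentiation then gives $|\nabla u_x|\leq C$ at every Lebesgue point, and passing back through $T_x$ yields $|\nabla u|\leq C$ a.e.\ on $B(x_0,r_0)$; so $u$ is Lipschitz there. The argument on $\{u<0\}$ is identical with $-u$ and $q_-$ in place of $u$ and $q_+$.

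The main obstacle I expect is correctly setting up the cancellation: verifying that the positivity of both $u_x$ and the harmonic extension $u_s^*$ (which requires choosing $r_0$ small enough relative to $c$ and the ellipsoid distortion $\Lambda^{1/2}$) is exactly what allows us to drop the $Cs^n$ error from \eqref{gradientcomparison3} while still keeping the $Cs^\alpha\int|\nabla u_x|^2$ term arising from the $C^{0,\alpha}$-variation of $A$. Once that improved bound is in hand, the iteration is a direct rerun of the continuity argument with a summable error, and passing from the $L^2$-average bound on $\nabla u$ to a pointwise bound is routine.
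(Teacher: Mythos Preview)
Your proposal is correct and follows essentially the same route as the paper: use positivity of $u$ and the maximum principle to cancel the $q_\pm^2$ terms against those of the harmonic competitor, thereby upgrading the $Cs^n$ error in \eqref{gradientcomparison3} to $Cs^{n+\alpha}$, then iterate to a uniform bound on $\omega(u_x,x,\cdot)$ and conclude via Lebesgue differentiation. The paper's presentation first assumes $A(x)=I$ and then invokes Lemma~\ref{lem: changingvariables} to reduce to that case, whereas you carry the change of variables throughout, but this is only a cosmetic difference.
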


\begin{proof}
We show the result for almost minimizers of $J$ in $\{u>0\}$, but the proof 
applies to the other cases. First 
let $x\in \{u>0\}$ be such that $A(x)=I$ and take $r>0$ 
such 
that $\overline{B}(x,2\Lambda^{1/2}\lambda^{-1/2}r)\subset \{u>0\}$.  
We start as in the proof of Lemma~\ref{L:logdecay}. 
Denote with $u_r^*$ the function with the same trace as $u$ on $\partial B(x,r)$ and which minimizes 
the Dirichlet energy under this constraint. Since $u$ is an almost minimizer we have
\begin{equation}\label{Jbound}
J_{B,x,r}(u)\le J_{B,x,r}(u_r^*)+ 
\kappa r^{n+\alpha}.
\end{equation}
Since $u>0$ in $\overline{B}(x,r)$, by the maximum principle we have $u_r^*>0$ in $\overline{B}(x,r)$. Therefore \eqref{Jbound} gives
\[
\int_{B(x,r)} (\langle A\nabla u,\nabla u\rangle +q_+^2)\le \int_{B(x,r)} (\langle A\nabla u_r^*,\nabla u_r^*\rangle +q_+^2) +
\kappa r^{n+\alpha},
\]
which implies that 
\[
\int_{B(x,r)} \langle A\nabla u,\nabla u\rangle \le  \int_{B(x,r)} \langle A\nabla u_r^*,\nabla u_r^*\rangle   
+
\kappa r^{n+\alpha}.
\]
Hence, since $A(x)=I$ and then by \eqref{1.25g}, 
\begin{align}
\int_{B(x,r)}|\nabla u|^2& =\int_{B(x,r)}\langle (A(x)-A(y))\nabla u,\nabla u\rangle 
+\int_{B(x,r)}\langle A(y)\nabla u,\nabla u\rangle\nonumber \\
&\le Cr^{\alpha}\int_{B(x,r)}|\nabla u|^2+\int_{B(x,r)}\langle A\nabla u_r^*,\nabla u_r^*\rangle 
+ 
\kappa r^{n+\alpha}.\label{gradbd}
\end{align}
As in \eqref{2.3mult}, 
$\int_{B(x,r)}|\nabla u_r^*|^2=\int_{B(x,r)}\langle \nabla u,\nabla u_r^*\rangle$, hence \eqref{gradbd} yields
\begin{align}
\int_{B(x,r)} &|\nabla u-\nabla u_r^*|^2 
=\int_{B(x,r)} |\nabla u|^2- \int_{B(x,r)}|\nabla u_r^*|^2
\nonumber\\
&\le Cr^{\alpha}\int_{B(x,r)}|\nabla u|^2
+\int_{B(x,r)}\langle A\nabla u_r^*,\nabla u_r^*\rangle 
+ 
\kappa r^{n+\alpha} - \int_{B(x,r)}|\nabla u_r^*|^2
\nonumber \\
&= Cr^{\alpha}\int_{B(x,r)}|\nabla u|^2
+\int_{B(x,r)}\langle (A-I)\nabla u_r^*,\nabla u_r^*\rangle
+\kappa r^{n+\alpha}
\nonumber\\
&\le Cr^{\alpha}\int_{B(x,r)}|\nabla u|^2
+ Cr^{\alpha}\int_{B(x,r)} |\nabla u_r^*|^2
+ 
\kappa r^{n+\alpha}\nonumber\\
&\le Cr^{\alpha}\int_{B(x,r)}|\nabla u|^2 
+ 
\kappa r^{n+\alpha},\label{equiv3.4}
\end{align}
by the minimizing property of $u_r^*$.

Defining $\omega(u,x,s)$ for $0<s\le r$ as in \eqref{w}, 
the triangle inequality, subharmonicity of $|\nabla u_r^*|^2$ and \eqref{equiv3.4} yield
as for \eqref{firstw}, but with a smaller error term 
\begin{align}
\omega(u,x,s) &\le \left(1+C\left(\frac{r}{s}\right)^{\frac{n}{2}}r^{\alpha/2}\right)\omega (u,x,r)+C\left(\frac{r}{s}\right)^{\frac{n}{2}}r^{\alpha/2}.\label{equiv3.5}
\end{align}

Set $r_j=2^{-j}r$ for $j\ge 0$ and apply \eqref{equiv3.5} repeatedly. 
This time the error term yields a converging series, and we obtain
as in (3.6) of \cite{davidtoroalmostminimizers},
\begin{equation}\label{equiv3.6}
\omega(u,x,r_{j+1})\le \omega(u,x,r)\prod_{l=0}^j\left(1+C2^{n/2}r_l^{\alpha/2}\right) +C2^{n/2}\sum_{l=1}^{j+1}\left(\prod_{k=l}^j\left(1+C2^{n/2}r_k^{\alpha/2}\right)\right)r_{l-1}^{\alpha/2}.
\end{equation}
Since $\prod_{l=0}^{\infty}\left(1+C2^{n/2}r_l^{\alpha/2}\right)\le C$, where $C$ depends on an upper bound for $r$, \eqref{equiv3.6} yields
\begin{equation}\label{equiv3.7}
\omega(u,x,r_{j+1})\le C\omega(u,x,r)+C2^{n/2}\sum_{l=1}^{j+1}r_{l-1}^{\alpha/2}\le C\omega(u,x,r)+Cr^{\alpha/2}.
\end{equation}
Consequently, applying this for $j$ such that 
$r_{j+1}< s \le r_j$,
\begin{equation}\label{equiv3.8}
\omega(u,x,s)\le C\omega(u,x,r)+Cr^{\alpha/2} 
\ \text{ for } 0<s \le r.
\end{equation}

Recall that all of this holds if $\overline{B}(x,2\Lambda^{1/2}\lambda^{-1/2}r)\subset \{u>0\}$ and $A(x)=I$.
Now assume that $x\in \Omega$, but maybe $A(x) \neq I$. By Lemma \ref{lem: changingvariables} and 
Remark \ref{rem: changingvariables}, $u_x$ is an almost minimizer in the domain $\Omega_x = T_x(\Omega)$, 
with the functional $J_x$ associated to $A_x$ defined by \eqref{defux}, the same exponent $\alpha$ and 
the constant $\wt\kappa = \det A(x)^{-1/2}\kappa \leq C \kappa$. 
The proof above yields
\begin{equation}\label{equiv3.8g}
\omega(u_x,x,s)\le C\omega(u_x,x,r)+Cr^{\alpha/2} 
\ \text{ for } 0<s \le r,
\end{equation}
as soon as $\overline{B}(x,2\Lambda_x^{1/2}\lambda_x^{-1/2}r)\subset \{u_x>0\}$,
where the constants $\Lambda_x$ and $\lambda_x$ are slightly different, because they correspond
to $A_x$. On the other hand $\Lambda_x$ and $\lambda_x$ are bounded above and below in terms of 
$\Lambda$ and $\lambda$.
Let us not do the precise computation but choose $c(\lambda,\Lambda)\in (0,1/2)$ such that 
this happens for $r \leq 2c(\lambda,\Lambda) \dist(x,\p\Omega)$. Note that in this case $E_x(x,r)\subset B(x, \Lambda_x^{1/2}r)\subset B(x, C_0\Lambda^{1/2}r)\subset \Omega$.
If in addition $x$ is a Lebesgue point for $|\nabla u|^2$ (recall that this happens for almost every $x\in \Omega$,
because $|\nabla u|^2 \in L^1_{loc}(\Omega)$), then 
\begin{align} \label{limnabla} 
|\nabla u|^2(x) &= \lim_{s \to 0}\fint_{E_x(x,s)} |\nabla u|^2
\leq C \limsup_{s \to 0}\fint_{B(x,s)} |\nabla u_x|^2 = C \limsup_{s \to 0} \omega(u_x,x,s)^2 
\nonumber\\
&\leq C\big(\omega(u_x,x,r)+r^{\alpha/2}\big)^2
\leq C \fint_{E_x(x,r)} |\nabla u|^2  +Cr^{\alpha}
\end{align}
because $\nabla u$ and $\nabla u_x$ are related by \eqref{defux}, and by \eqref{equiv3.8g}.

Note that the Lebesgue points (with the strong definition where we average
$|u(x)-u(y)|$ on small balls) are the same for the balls and the ellipsoids $E_x(x,r)$, which have
bounded eccentricities.
Now \eqref{limnabla} means that locally, the gradient of $u$ is bounded, and hence $u$ is Lipschitz in small balls.
In particular given a compact subset $K$ of $\Omega$ there exists $\eta_K=C(\lambda,\Lambda,\dist(K,\partial\Omega)$ such that $(K, \eta_K)\subset\Omega$ and
\begin{equation}\label{tt-5}
\sup_{x\in K}|\nabla u(x)|\le C(K,\lambda,\Lambda)\int_{(K,\eta_K)}|\nabla u|^2 + C(K).
\end{equation}
Theorem \ref{T:Lippositive} follows.
\end{proof}

We shall now improve Theorem \ref{T:Lippositive} and prove that $u$ is $C^{1,\beta}$ away
from the free boundary. Before we wanted bounds on averages of $|\nabla u|^2$, and now we want
to be more precise and control the variations of $\nabla u$. Our main tool will be a (more careful)
comparison with the harmonic approximation $(u_x)_r^*$.

\medskip 
\begin{theorem}\label{T:C1beta}
Let $u$ be an almost minimizer for $J$ in $\Omega$ and set $\beta=\frac{\alpha}{n+2+\alpha}$. 
Then $u$ is of class 
$C^{1,\beta}$ locally in $\{u>0\}$ and in $\{u<0\}$.
\end{theorem}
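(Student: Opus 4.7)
The plan is to prove a Campanato-type decay estimate for $\nabla u$ and then invoke the standard Campanato embedding. Fix a compact set $K \i \{u>0\}$ and $x_0 \in K$; I will show that for all $s$ small enough,
\begin{equation*}
\phi(x_0,s) := \inf_{c \in \R^n} \fint_{B(x_0,s)} |\nabla u - c|^2 \le C s^{2\beta}, \qquad \beta = \frac{\alpha}{n+2+\alpha},
\end{equation*}
with $C$ uniform on $K$. By the Campanato characterization of Hölder continuity this gives $\nabla u \in C^{0,\beta}_{\loc}(\{u>0\})$; the argument in $\{u<0\}$ is identical, and the $J^+$ case only needs the $\{u>0\}$ branch.

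The method is to compare $\nabla u$ to the gradient of its harmonic replacement at a well-chosen intermediate scale. First, using Remark \ref{rem: changingvariables}, apply the affine map $T_{x_0}$ to reduce to $A(x_0) = I$; since $A$ is continuous on $K$, the transformed constants ($\wt\kappa$, the eccentricities of the $E$-ellipsoids, and the $C^{0,\alpha}$ norm of $A_{x_0}$) remain bounded in terms of $K$ only. For $\overline B(x_0, r) \i \{u > 0\}$, let $u_r^*$ denote the Euclidean harmonic extension of $u|_{\p B(x_0,r)}$. Estimate \eqref{equiv3.4} (already produced in the proof of Theorem \ref{T:Lippositive}) then reads
\begin{equation*}
\int_{B(x_0, r)} |\nabla u - \nabla u_r^*|^2 \le C r^{\alpha} \int_{B(x_0, r)} |\nabla u|^2 + \kappa r^{n+\alpha} \le C r^{n+\alpha},
\end{equation*}
where at the final step I insert the local Lipschitz bound on $\nabla u$ furnished by Theorem \ref{T:Lippositive}. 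In particular, for any $0 < s \le r$,
\begin{equation*}
\fint_{B(x_0,s)} |\nabla u - \nabla u_r^*|^2 \le C \Bigl(\frac{r}{s}\Bigr)^n r^{\alpha}.
\end{equation*}

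For the harmonic piece, each partial derivative of $u_r^*$ is itself harmonic in $B(x_0, r)$, so the classical interior mean-value oscillation estimate yields, for $0 < s \le r/2$,
\begin{equation*}
\inf_{c \in \R^n}\fint_{B(x_0,s)} |\nabla u_r^* - c|^2 \le C \Bigl(\frac{s}{r}\Bigr)^2 \fint_{B(x_0, r)} |\nabla u_r^*|^2 \le C \Bigl(\frac{s}{r}\Bigr)^2,
\end{equation*}
again thanks to the Lipschitz bound together with $\int |\nabla u_r^*|^2 \le \int |\nabla u|^2$ (minimality of $u_r^*$). Adding these two contributions via the triangle inequality,
\begin{equation*}
\phi(x_0, s) \le C \Bigl(\frac{r}{s}\Bigr)^n r^\alpha + C \Bigl(\frac{s}{r}\Bigr)^2.
\end{equation*}
Optimizing in $r$ by equating the two terms gives $r = s^{(n+2)/(n+2+\alpha)}$, which is admissible (i.e. $s \le r \le r_0$) as soon as $s$ is small enough, and produces $\phi(x_0,s) \le C s^{2\beta}$ with $\beta = \alpha/(n+2+\alpha)$.

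The one technical subtlety, compared to the constant-coefficient argument in \cite{davidtoroalmostminimizers}, is that we must compare $u$ to a truly harmonic replacement, not an $A$-harmonic one, in order to use the classical interior gradient decay. This is exactly what requires the frozen-coefficient reduction $A(x_0) = I$; the price paid is the $r^\alpha$ factor from $|A(y) - A(x_0)| \lesssim r^\alpha$ already visible in \eqref{equiv3.4}. Since this error carries the same scaling as the almost-minimizer defect $\kappa r^{n+\alpha}$, it is absorbed cleanly, and the exponent $\beta = \alpha/(n+2+\alpha)$ that emerges is precisely the one obtained by balancing $(r/s)^n r^\alpha$ against $(s/r)^2$.
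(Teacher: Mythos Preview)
Your proof is correct and follows essentially the same route as the paper's: reduce to $A(x_0)=I$ via $T_{x_0}$, compare $\nabla u$ to the gradient of the Euclidean harmonic replacement $u_r^*$ using \eqref{equiv3.4}, exploit interior regularity of the harmonic function to control the oscillation of $\nabla u_r^*$, and balance $(r/s)^n r^\alpha$ against $(s/r)^2$ to produce $\beta=\alpha/(n+2+\alpha)$. The only cosmetic differences are that you invoke the Lipschitz bound from Theorem~\ref{T:Lippositive} a step earlier (to replace $\omega(u,x,r)$ by a constant immediately) and package the final iteration as a black-box Campanato embedding, whereas the paper keeps $\omega(u,x,r)$ in the estimates a little longer and writes out the telescoping comparison of averages $m_E(u,x,\rho)$ explicitly in \eqref{equiv3.25}--\eqref{equiv3.27}; one small point you elide is that after the change of variables the Campanato decay is for $\nabla u_{x_0}$ on balls (equivalently $\nabla u$ on the ellipsoids $E_{x_0}(x_0,s)$), so to compare $\nabla u$ at two nearby points $x,y$ one must match an $E_x$-average to an $E_y$-average as the paper does in \eqref{equiv3.27}---a routine step, but worth a sentence.
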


\begin{proof}
As before we consider almost minimizers for $J$ and the open set $\{u>0\}\subset\Omega$, but the proof
works in the other cases. Let $x\in \Omega$ be given, assume first that $A(x) = I$ (we will reduce to that case later),
and let $r$ be such that $\overline B(x,r) \subset \{u>0\}\subset\Omega$.
Let $u_r^\ast$ denote, as before, the harmonic extension of $u|_{\partial B(x,r)}$ to $B(x,r)$. Then $\nabla u_r^*$ is also 
harmonic and by the mean value property
\begin{equation}\label{vuxr}
v(u,x,r):=\fint_{B(x,r)} \nabla u_r^\ast =\nabla  u_r^\ast(x).
\end{equation}
We want to estimate $\int_{B(x,\tau r)}|\nabla u-v(u,x,r)|^2$, where $\tau \in (0,1/2)$ is a small number to be chosen later, depending on $r$. 
As in 3.20 from \cite{davidtoroalmostminimizers}, we deduce that for $y\in B(x,\tau r)$,
\begin{align}
|\nabla u_r^\ast(y)-v(u,x,r)|&
=|\nabla u_r^\ast(y)-\nabla u_r^\ast(x)| \le \tau r\sup_{B(x,\tau r)}|\nabla^2 u_r^\ast|\nonumber\\
&\le C\tau \left(\fint_{B(x,r)}|\nabla u_r^\ast|\right)\le C\tau \left(\fint_{B(x,r)}|\nabla u_r^\ast|^2\right)^{1/2}\nonumber\\
&\le C\tau \left(\fint_{B(x,r)}|\nabla u|^2\right)^{1/2}= C\tau \omega(u,x,r),
\label{equiv3.20}
\end{align}
where the last part uses the Dirichlet minimality of $u_r^\ast$. Then by \eqref{equiv3.4} and \eqref{equiv3.20},
\begin{align}
\int_{B(x,\tau r)}|\nabla u-v(u,x,r)|^2 
&\le 2\int_{B(x,\tau r)}|\nabla u -\nabla (u_r^\ast)|^2+2\int_{B(x,\tau r)}|\nabla u_r^\ast-v(u,x,r)|^2
\nonumber\\
&\le 2\int_{B(x,r)}|\nabla u -\nabla (u_r^\ast)|^2 +C \tau^{n+2}r^n \omega(u,x,r)^2
\nonumber\\
& \le Cr^{\alpha}\int_{B(x,r)}|\nabla u|^2+Cr^{n+\alpha}+C \tau^{n+2}r^n \omega(u,x,r)^2.
\nonumber\\
& \le C[r^{\alpha}+  \tau^{n+2}] r^n \omega(u,x,r)^2 + Cr^{n+\alpha}
\label{equiv3.19} 
\end{align}
or, dividing by $(\tau r)^{-n}$,
\begin{equation}\label{equiv3.21} 
\fint_{B(x,\tau r)}|\nabla u-v(u,x,r)|^2 \leq C [\tau^{-n} r^\alpha + \tau^2][1+\omega(u,x,r)^2].
\end{equation}
We want to optimize in \eqref{equiv3.21} and take $\tau=r^{\frac{\alpha}{n+2}}$.
Since we required $\tau<1/2$ for the computations above, we add the assumption that 
\begin{equation}\label{rissmall}
r^{\frac{\alpha}{n+2}}<1/2
\end{equation}
Set $\rho=\tau r = r^{1+\frac{\alpha}{n+2}}=r^{\frac{n+2+\alpha}{n+2}}$, and notice that
$r^{\alpha}\tau^{-n}=\tau^2=r^{\frac{2\alpha}{n+2}}=\rho^{\frac{2\alpha}{n+2+\alpha}}$.
Also set $\beta=\frac{\alpha}{n+2+\alpha}$ as in the statement ; this way \eqref{equiv3.21} implies that
\begin{equation}\label{equiv3.21g} 
\fint_{B(x,\tau r)}|\nabla u-v(u,x,r)|^2 \leq C \rho^{2\beta} [1+\omega(u,x,r)^2].
\end{equation}
Now we want to compute everything in terms of $\rho$ rather than $r$, so we take 
\begin{equation} \label{rofrho}
r = r(\rho) = \rho^{\frac{n+2}{n+2+\alpha}}
\end{equation}
and record that \eqref{rissmall} means that $\rho < 2^{-\frac{n+2+\alpha}{\alpha}}$.
Now let
\begin{equation}\label{mux}
m(u,x,\rho)=\fint_{B(x,\rho)}\nabla u; 
\end{equation}
Since $B(x,\tau r)= B(x,\rho)$ and $m(u,x,\rho)$ 
gives the best approximation of $\nabla u$ in $L^2$, \eqref{equiv3.21g} implies that
\begin{equation}\label{equiv3.24} 
\fint_{B(x,\rho)}|\nabla u-m(u,x,\rho)|^2 \leq \fint_{B(x,\rho)}|\nabla u-v(u,x,r)|^2 
\leq C \rho^{2\beta} [1+\omega(u,x,r)^2],
\end{equation} 
where we keep $r = r(\rho)$ in $\omega(u,x,r)$ to simplify the notation.

So far this holds whenever $u(x)>0$ and $A(x)=I$, as soon as 
$\rho < 2^{-\frac{n+2+\alpha}{\alpha}}$ for \eqref{rissmall}), and 
$\overline B(x,r(\rho)) \subset \{u>0\}$, so that we can define $u_r^\ast$ and do the computations.

Let us extend \eqref{equiv3.24} to the case when we no longer assume that $A(x)=I$.
By Lemma \ref{lem: changingvariables} and Remark \ref{rem: changingvariables}, $u_x$ is an almost 
minimizer in the domain $\Omega_x = T_x(\Omega)$, 
with the functional $J_x$ associated to $A_x$ defined by \eqref{defux}, the same exponent $\alpha$ and 
the constant $\wt\kappa = \det A(x)^{-1/2}\kappa \leq C \kappa$. So we can apply the proof of \eqref{equiv3.24}
to the function $u_x$; we get that
\begin{equation}\label{equiv3.24bis} 
\fint_{B(x,\rho)}|\nabla u_x-m(u_x,x,\rho)|^2 \leq C \rho^{2\beta} [1+\omega(u_x,x,r(\rho))^2],
\end{equation} 
maybe with a slightly larger constant (because of $\wt\kappa$). The conditions of validity are now that
$\rho < 2^{-\frac{n+2+\alpha}{\alpha}}$, as before, and $\overline B(x,r) \subset \Omega_x$, i.e., 
\begin{equation} \label{3.20g}
\overline E_x(x,r(\rho)) = T_x^{-1}(\overline B(x,r(\rho)) \subset \Omega.
\end{equation}
Since $u=u_x \circ T$ by \eqref{defux}, $\nabla u(y) = T^t \nabla u_x(T(y))$. Then using a change of variable in \eqref{equiv3.24bis} 
we have
\begin{equation}\label{equiv3.24'} 
\fint_{E_x(x,\rho)}|\nabla u-m_E(u,x,\rho)|^2 \leq C \rho^{2\beta} [1+\omega_E(u,x,r(\rho))^2],
\end{equation}
where $C$ became larger, depending on $\lambda$ and $\Lambda$, and where
\begin{equation} \label{3,22g}
m_E(u,x,\rho) = \fint_{E_\rho(x,\rho)} \nabla u 
\, \text{ and } \, \omega_E(u,x,r)^2 = \fint_{E_\rho(x,\rho)} |\nabla u|^2.
\end{equation}

\smallskip
Now we localize and get rid of $\omega_E(u,x,r)$. 
Let $B_0 = B(x_0,r_0)$ be such that that $4B_0 \subset \{u>0\}\subset\Omega$ with
$r_0^{\frac{\alpha}{n+2}}< 1/2$, because this way we will always pick radii that satisfy 
\eqref{rissmall}. Theorem \ref{T:Lippositive} and in particular \eqref{tt-5} ensure that $u$ is Lipschitz on $2B_0$.

%

Then let $x,y \in B_0$ be given. Suppose in addition that $|x-y| \leq c r_0^{\frac{n+2+\alpha}{n+2}}$,
where the small constant $c$ depends on $\lambda$ and $\Lambda$, and will be chosen soon.
We want to apply the computations above with radii $\rho \leq 2 \lambda^{-1/2} |x-y|$, and we choose $c$ so
small that \eqref{rofrho} yields $\Lambda r(\rho) < r_0$, and so 
$E_x(x,r(\rho)) \subset B(x,r_0) \subset 2B_0$.
Then \eqref{equiv3.24'}, holds with uniform control on $\omega_E(u,x,r(\rho)) \leq C(B_0)$ by \eqref{tt-5}.

We apply this to $\rho$ and $\rho/2$, compare, and get that
\begin{align}
|m_E(u,x,\rho/2)-m_E(u,x,\rho)&|=\left|\fint_{E_x(x,\rho/2)}\nabla u-m_E(u,x,\rho)\right|
\le 2^n\fint_{E_x(x,\rho)}|\nabla u-m_E(u,x,\rho)|
\nonumber\\
&\le 2^n \left(\fint_{E_x(x,\rho)}|\nabla u-m_E(u,x,\rho)|^2\right)^{1/2}
 \le C \rho^{\beta} [1+C(B_0)^2]^{1/2}.
\label{equiv3.25}
\end{align}
Then we iterate as usual, sum a geometric series, and find that when $x$ is a Lebesgue point for $\nabla u$,
\begin{equation} \label{equiv3.26}
|\nabla u(x) - m_E(u,x,\rho)| \leq C \rho^{\beta} [1+C(B_0)^2]^{1/2}.
\end{equation}
We have a similar estimate for $y$ if $y$ is a Lebesgue point also. We now compare 
two averages as we did in \eqref{equiv3.25}. Take $\rho_x= 2 \lambda^{-1/2} |x-y|$,
so that $E_x(x,\rho_x)$ contains $B(x,2|x-y|)$, and $\rho_y = \Lambda^{-1/2} |x-y|$,
chosen so that $E_y(y,\rho_y) \subset B(y,|x-y|) \subset E_x(x,\rho_x)$. Then
\begin{align}
|m_E(u,y,\rho_y)&-m_E(u,x,\rho_x)| =\Big|\fint_{E_y(y,\rho_y)}\nabla u-m_E(u,x,\rho_x)\Big|
\nonumber\\
&
\le (\Lambda/\lambda)^{n/2} \fint_{E_x(x,\rho_x)}|\nabla u-m_E(u,x,\rho_x)|
\le C \left(\fint_{E_x(x,\rho_x)}|\nabla u-m_E(u,x,\rho_x)|^2\right)^{1/2}
\nonumber\\
&\le C \rho_x^{\beta} [1+C(B_0)^2]^{1/2} \le C |x-y|^{\beta} [1+C(B_0)^2]^{1/2}.
\label{equiv3.27}
\end{align}
This completes the proof of Theorem \ref{T:C1beta}.
\end{proof}

\section{Estimates towards Lipschitz continuity}\label{S:technical}
In this section we prove technical results needed to obtain local Lipschitz regularity for both the one phase 
and two-phase problems. 
The main case is really with two phases, but our estimates are also
true (and some times simpler) for $J^+$.


Define the quantities \begin{equation}\label{defthebs} 
b(x,r) 
= \fint_{\partial B(x,r)}u_x\: \  \text{ and }\: \ b^+(x,r) 
=\fint_{\partial B(x,r)}|u_x|,
\end{equation}
where we recall that $u_x = u \circ T_x^{-1}$ and $T_x$ is the affine mapping from \eqref{defTE}.
We will sometimes write $b(u_x,x,r)$ and $b^+(u_x,x,r)$ to stress the dependence on $u_x$.

The object of the next manipulations will be to distinguish two types of pairs $(x,r)$, for which we will use different
estimates. 
For constants $\tau \in (0, 10^{-2}), C_0 \geq 1$, 
$C_1 \geq 3$ and 
$r_0 > 0$,
we study the class $\mathcal G(\tau, C_0, C_1 , r_0)$ of 
pairs $(x,r) \in \Omega \times (0, r_0]$ such that 
\begin{equation} \label{bballe} 
E_x(x,2r) \subset \Omega,
\end{equation}
\begin{equation}\label{defofGsetbisbig} 
 C_0 \tau^{-n} (1+ r^\alpha \omega(u_x,x,r)^2)^{1/2} \leq r^{-1} |b(x,r)|,
 \end{equation}
 and
  \begin{equation}\label{defofGsetbplusissmall} 
  b^+(x,r) \leq C_1 |b(x,r)|.
  \end{equation} 

Let us explain the idea. We force $r \leq r_0$ to have uniform estimates,
and \eqref{bballe} is natural. In \eqref{defofGsetbisbig}, we will typically choose $\tau$ very small, so 
\eqref{defofGsetbisbig} really says that the quantity $r^{-1} |b(x,r)|$ is as large as we want.
This quantity has the same dimensionality of the expected variation of $u$ on $B(x,r)$.
And in addition, \eqref{defofGsetbplusissmall} says that $b$ accounts for a significant part of $b^+$, 
which measures the average size of $|u|$. We mostly expect this to happen only far from the free boundary,
and the next lemmas go in that direction.

We will have to be a little more careful than usual, because for the first time we will play with our usual center $x$,
and at the same time with ellipsoids $E_z(z,\rho)$, with $z$ near $x$, with different orientations.
Set
\begin{equation} \label{kk}
k = \frac{1}{6}\lambda^{1/2}\Lambda^{-1/2},
\end{equation}
which we choose like this so that
\begin{equation} \label{defofkk}
E_z(z, k r) \subset B(z,\Lambda^{1/2} k r) \subset
E_x(x,r/2) \text{ whenever $x\in \Omega$ and } z \in E_x(x,r/3).
\end{equation}
Indeed recall that $E_x(x,r) = T_x^{-1}(B(x,r))$ and $T_x(y) = x + A^{-1/2}(y-x)$ by \eqref{defTE}, and similarly for $z$.
The first inclusion follows at once, and since $B(z,\Lambda^{1/2} k r)$ is contained in the translation centered at 
$z$ of $E_x(x, \lambda^{-1/2}\Lambda^{1/2} k r) = E_x(x, r/6)$, \eqref{defofkk} holds too.
We start with a self-improvement lemma.

\begin{lemma}\label{Gselfimprovement}
Assume $u$ is an almost minimizer for $J$ in $\Omega$. For each choice of constants $C_1 \geq 3$ and $r_0$, there is a constant $\tau_1 \in (0, 10^{-2})$ (which depends only on $n, \kappa, \alpha, r_0, C_1, \lambda$ and $\Lambda$), such that if $(x,r) \in \mathcal G(\tau, C_0, C_1, r_0)$ for some choice of $\tau \in (0, \tau_1)$ and $C_0 \geq 1$, 
then for each $z \in E_x(x, \tau r/3)$, 
we can find $\rho_z \in (\tau kr/2, \tau kr)$
such that $(z, \rho_z) \in \mathcal G(\tau, 10C_0, 3, r_0)$. 
Here  $k$  is defined as in \eqref{kk} and satisfies \eqref{defofkk}. 
\end{lemma}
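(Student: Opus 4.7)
The plan is to use \eqref{defofGsetbisbig} to show that the harmonic replacement $h := (u_x)_r^\ast$ of $u_x|_{\partial B(x,r)}$ behaves essentially like a large nonzero constant on $B(x,r)$, so that the bounds at $(z,\rho_z)$ follow by transferring this information through the affine change of variables relating $u_x$ and $u_z$. The factor $10$ loss in the output constant will be absorbed by the factor $1/(\tau k)$ gained when the denominator rescales from $r$ to $\rho_z \leq \tau k r$. For the geometric claim \eqref{bballe} at $(z,\rho_z)$: since $\rho_z \geq \tau k r/2$, we have $z \in E_x(x, \tau r/3) \subset E_x(x, 2\rho_z/(3k))$, and applying \eqref{defofkk} with $r$ replaced by $2\rho_z/k$ yields $E_z(z, 2\rho_z) \subset E_x(x, \rho_z/k) \subset E_x(x, r) \subset \Omega$.

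Since $A_x(x) = I$, the argument behind \eqref{gradientcomparison3} applies verbatim to $u_x$, giving
\[
\int_{B(x,r)}|\nabla u_x - \nabla h|^2 \leq C r^n \bigl(r^\alpha\omega(u_x,x,r)^2 + 1\bigr),
\]
and hence by Poincar\'e $\int_{B(x,r)}(u_x - h)^2 \leq C r^{n+2}(r^\alpha\omega^2 + 1)$. The mean-value property gives $h(x) = b(u_x,x,r)$, while standard interior estimates give $\|\nabla h\|_{L^\infty(B(x,r/2))} \leq C\omega(u_x,x,r)$ and $\|D^2 h\|_{L^\infty(B(x,r/2))} \leq C\omega/r$. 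A Fubini/pigeonhole argument on $\int_{\tau kr/2}^{\tau kr}\!\!\int_{\partial B(z,\rho)}(u_x - h)^2\,d\sigma\,d\rho$ produces $\rho_z \in (\tau kr/2, \tau kr)$ with $\fint_{\partial B(z,\rho_z)}(u_x-h)^2\,d\sigma \leq C\tau^{-n} r^2(r^\alpha\omega^2 + 1)$, and similarly for the $L^1$ average. To transfer to $u_z$, write $u_z(y) = u_x(F(y))$ with $F(y) = z' + S(y-z)$, where $z' := T_x(z)$ and $S := A^{-1/2}(x)A^{1/2}(z) = I + O(r^\alpha)$ by H\"older continuity of $A$. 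A Taylor expansion of $h$ about $z'$ combined with the antipodal symmetry $\theta \mapsto -\theta$ on $\partial B(z,\rho_z)$ yields
\[
\fint_{\partial B(z,\rho_z)} h(F(y))\,d\sigma(y) = h(z') + O\bigl(\rho_z^2\omega/r + r^\alpha\rho_z\omega\bigr),
\]
while $\fint_{\partial B(z,\rho_z)}(u_x - h)\circ F\,d\sigma$ is controlled by Cauchy--Schwarz and the pigeonhole bound. Combining, $b(u_z,z,\rho_z) = h(z') + \mathrm{Err}$ with $|\mathrm{Err}|$ small.

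Finally, interior estimates give $|h(z') - h(x)| \leq C\tau r\,\omega(u_x,x,r)$, and \eqref{defofGsetbisbig} implies the self-consistent bound $\omega(u_x,x,r) \leq C|h(x)|\tau^n/(C_0 r^{1+\alpha/2})$, so $|h(z') - h(x)|/|h(x)| \leq C\tau^{n+1}/(C_0 r^{\alpha/2})$. Choosing $\tau_1$ small enough (in terms of $r_0$, the compact set, and the other constants) forces this ratio to be at most $1/2$, so $|b(u_z,z,\rho_z)| \geq |h(x)|/2$. Dividing by $\rho_z \leq \tau kr$ and using Lemma~\ref{L:logdecay} to bound $\omega(u_z,z,\rho_z) \leq C\omega(u_x,x,r) + C\log(1/\tau)$ then yields \eqref{defofGsetbisbig} for $(z,\rho_z)$ with constant $10 C_0$. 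For \eqref{defofGsetbplusissmall}: since $|h(x)|$ dominates $\mathrm{osc}_{B(x,r/2)} h$ by the same chain of inequalities, $h\circ F$ has constant sign on $\partial B(z,\rho_z)$, and combined with the $L^1$-closeness of $u_z$ to $h\circ F$ on that sphere, we obtain $b^+(u_z,z,\rho_z) \leq 3|b(u_z,z,\rho_z)|$. The main obstacle is making all error terms simultaneously smaller than $|h(x)|$ uniformly in $r \in (0,r_0]$: naive bounds like $\|\nabla h\|_\infty \leq C\|u\|_\infty/r$ are too weak, and one must instead invoke the tighter bound on $\omega$ coming from \eqref{defofGsetbisbig} itself, together with the near-constant-sign (Harnack-type) behaviour of $h$, to close the estimates.
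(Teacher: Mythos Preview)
Your argument has a genuine gap: the displayed bound $|h(z')-h(x)|/|h(x)| \leq C\tau^{n+1}/(C_0 r^{\alpha/2})$ is \emph{not} uniform in $r\in(0,r_0]$, since the right-hand side blows up as $r\to 0$. No choice of $\tau_1$ depending only on $n,\kappa,\alpha,r_0,C_1,\lambda,\Lambda$ can force this ratio below $1/2$ simultaneously for all small $r$. The same non-uniformity contaminates your Taylor remainder $O(\rho_z^2\omega/r)$: this is of order $\tau^2 r\omega$, and bounding $r\omega$ via \eqref{defofGsetbisbig} again produces a factor $r^{-\alpha/2}$. Your closing remark that one should ``invoke the tighter bound on $\omega$ coming from \eqref{defofGsetbisbig}'' is precisely what causes the problem, not its cure; and the appeal to Harnack is circular, since $h$ is not known a priori to have a sign on $B(x,r/2)$.

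The missing idea is to use the hypothesis \eqref{defofGsetbplusissmall}, which you never invoke. From the Poisson representation one has $\sup_{B(x,r/2)}|h|\leq C\fint_{\partial B(x,r)}|u_x|=Cb^+(x,r)\leq CC_1|b(x,r)|$, and hence the interior gradient estimate for harmonic functions gives $\sup_{B(x,\tau r)}|\nabla h|\leq (C/r)\sup_{B(x,r/2)}|h|\leq CC_1|b(x,r)|/r$. This yields directly $|h(y)-b(x,r)|\leq CC_1\tau|b(x,r)|$ for $y\in B(x,\tau r)$, with no factor of $r^{-\alpha/2}$ anywhere; the remainder of your outline (pigeonhole for $\rho_z$, the comparison of $\omega(u_z,z,\rho_z)$ with $\omega(u_x,x,r)$ via the logarithmic bound, and the analogous treatment of $b^+$) then goes through. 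This is exactly the route taken in the paper. With this correction the detour through the affine map $F$ and the second-order Taylor expansion also becomes unnecessary: one may integrate $u$ directly over $\partial E_z(z,\rho)$ with the Jacobian of $T_z$ and compare the result to $b(x,r)$.
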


\begin{proof} We already use $u_{x} = u \circ T_x^{-1}$ as in \eqref{defux},
and now let $(u_x)^*_r$ be the harmonic extension of $u_x $ to $B(x,r)$. Hence for $y \in B(x, \tau r)$
\begin{align} 
|(u_x)_r^*(y) - b(x,r)| &= \Big|(u_x)_r^*(y) - \fint_{\partial B(x,r)} u_x\Big| 
= |(u_x)_r^*(y) - (u_x)_r^*(x)| 
\leq \tau r \sup_{z \in B(x, \tau r)} |\nabla (u_x)_r^*(z)|
\nonumber\\ 
&\leq \tau 
\sup_{\partial B(x,r/2)} |(u_x)_r^*| \leq C\tau \fint_{\partial B(x,r)} |(u_x)_r^*| 
= C\tau \fint_{\partial B(x,r)}|u_x|\nonumber\\
& = C\tau b^+(x,r) \stackrel{\eqref{defofGsetbplusissmall}}{\leq} CC_1\tau |b(x,r)|.
\label{ustaryvsb}
\end{align}

Recall that \eqref{gradientcomparisonr} 
holds as long as $A(x)=I$ and $\overline{B}(x,r)\subset \Omega$. 
Then, by the discussion below \eqref{uaverage}, this also holds for $u_x$, as long as 
$\overline{B}(x,r)\subset \Omega_x = T_x(\Omega)$ or equivalently $\overline{E}_x(x,r)\subset \Omega$.
That is, 
\begin{equation}\label{gradientcomparisonrbis}
\int_{B(x,r)}|\nabla u_x-\nabla (u_x)_r^*|^2\le C r^{\alpha}\int_{B(x,r)}|\nabla u_x|^2+Cr^n.
\end{equation}
Then by Poincar\'{e}'s inequality and the definition \eqref{w},
\begin{equation}\label{avgtuminustsu} 
\fint_{B(x,r)} |u_x - (u_x)_r^*|^2 \leq r^2 \fint_{B(x,r)}|\nabla u_x - \nabla (u_x)_r^*|^2 
\leq Cr^2(r^\alpha \omega(u_x,x,r)^2 + 1).
\end{equation}
Applying Cauchy-Schwartz's inequality in the smaller ball; then by \eqref{avgtuminustsu} 
\begin{equation}\label{avgtuminustsuonsmall} 
\fint_{B(x,\tau r)} |u_x - (u_x)_r^*| \leq \tau^{-n/2} \left(\fint_{B(x,r)}|u_x - (u_x)_r^*|^2\right)^{1/2} \stackrel{\eqref{avgtuminustsu}}{\leq} C\tau^{-n/2}r(r^\alpha \omega(u_x,x,r)^2 +1)^{1/2},
\end{equation}
or equivalently, after an affine change of variable,
\begin{equation} \label{ams2}
\fint_{E_x(x,\tau r)} |u - (u_x)_r^* \circ T_x| \leq C\tau^{-n/2}r(r^\alpha \omega(u_x,x,r)^2 +1)^{1/2}.
\end{equation}
Now let $z\in E_x(x, \tau r/3)$ be given. We want to use \eqref{ams2} to control $b(z,\rho)$ for some $\rho \in (\tau k r/2,\tau kr)$
Fix $x$ and $z$, and notice that for each such $\rho$,
\begin{equation} \label{bjac}
b(z,\rho) = \fint_{\zeta \in \partial B(z,\rho)}u_z(\zeta)\, d\zeta = \fint_{\partial E_z(z,\rho)} u(\xi) \mathcal{J}(\xi) d\sigma(\xi), 
\end{equation}
where we set $\xi = T_z(\zeta) \in \partial E_z(z,\rho)$, notice that $u_z(\zeta) = u(\xi)$. Here $\mathcal{J}(\xi)$ is the Jacobian of the change of variable $T_z$. Since $A_x(x)=I$ and $A$ is H\"older continuous one can show that $|\mathcal{J}(\xi)-1| \leq C (\tau r)^{\alpha}$. Here we only use the fact that $C^{-1}\le\mathcal{J}(\xi)\le C$ for some $C$ depending on $\lambda$, $\Lambda$ and the H\"older norm of $A$.
There is no problem with the definition and the domains,
because $E_z(z,k\tau r) \subset E_x(x,\tau r/2)$ by \eqref{defofkk}.

Now we subtract $b(x,r)$, take absolute values, and integrate on $I = (\tau k r/2,\tau kr)$. We get that
\begin{align} \label{bjad}
\int_I |b(z,\rho) - b(x,r)|\, d\rho &\leq C (\tau r)^{n-1} \int_{\rho \in I} \int_{\partial E_z(z,\rho)} |u(\xi)-b(x,r)| \mathcal{J}(\xi) d\sigma(\xi)d\rho
\nonumber\\& \leq C (\tau r)^{n-1} \int_{E_x(x,\tau r/2)} |u-b(x,r)|
\leq C \tau r \fint_{E_x(x,\tau r/2)} |u-b(x,r)|.
\end{align}
Observe that  for $\xi\in E_x(x,\tau r/2)= T_x^{-1}(B(x, \tau r/2))$
\begin{equation}\label{tt-6}
|u(\xi)-b(x,r)| \leq |u(\xi) - (u_x)_r^*(T_x(\xi)| +  |(u_x)_r^*(T_x(\xi)) - b(x,r)|.
\end{equation}
Use \eqref{tt-6}, \eqref{ustaryvsb}
and \eqref{ams2}; this yields
\begin{equation} \label{bjae}
\int_I |b(z,\rho) - b(x,r)| \leq C \tau r \big[C_1 \tau |b(x,r)| + \tau^{-n/2}r(r^\alpha \omega(u_x,x,r)^2 +1)^{1/2}\big]
\end{equation}
and allows us to choose, by Chebyshev, a radius $\rho = \rho_z\in (\tau k r/2,\tau kr)$ such that
\begin{eqnarray} \label{batrhozbig}
|b(z,\rho_z) - b(x,r)| &\leq &C \big[C_1 \tau |b(x,r)| + \tau^{-n/2}r(r^\alpha \omega(u_x,x,r)^2 +1)^{1/2}\big]\nonumber\\
&\leq & C \big[C_1 \tau |b(x,r)| + \tau^{n/2} |b(x,r)| \big]\nonumber\\
&\leq & \frac{1}{2} |b(x,r)|, 
\end{eqnarray}
provided $\tau_1$ is small enough depending on $C$ (which depends on the usual constants 
for $J$) and $C_1$. Note that we have used \eqref{defofGsetbisbig} for 
 $\tau \leq \tau_1$. Note that \eqref{batrhozbig}
implies that 
\begin{equation}
|b(z,\rho_z)| \geq \frac{1}{2} |b(x,r).
\end{equation}


Next we need to prove \eqref{defofGsetbplusissmall} for $\rho_z$, and for this we want to control 
\begin{equation} \label{bjaz}
b^+(z,\rho) = \fint_{\zeta \in \partial B(z,\rho)} |u_z(\zeta)| = \fint_{\partial E_z(z,\rho)} |u(\xi)| \mathcal{J}(\xi) d\sigma(\xi), 
\end{equation}
where the only difference with \eqref{bjac} is that we used $|u|$. 
Recall that $||a|-|b||\le |a-b|$ and continue the computation as above, putting absolute values in \eqref{ustaryvsb}
to control the integral of $|(u_x)_r^*(y)| - |b(x,r)|$ and in \eqref{ams2} to control the integral of $|u|-|(u_x)^*\circ T_x|$. We obtain as in \eqref{batrhozbig}
that
\begin{equation} \label{bjaff}
|b^+(z,\rho_z) - |b(x,r)|| \leq C \big[C_1 \tau |b(x,r)| + \tau^{-n/2}r(r^\alpha \omega(u_x,x,r)^2 +1)^{1/2}\big]
\leq \frac12 |b(x,r)|.
\end{equation}
We do not even have to ask for an additional Chebyshev requirement for $\rho_z$, even though we could have done so.
Hence, if $\tau_1$ is small enough and by \eqref{batrhozbig},
\begin{align}
b^+(z, \rho_z) \leq \frac{3}{2}|b(x,r)| \leq 3 |b(z,\rho_z)|,
\label{upperboundonbplussmallscale}
\end{align}
which is \eqref{defofGsetbplusissmall} with $C_1 = 3$. 

We are left to verify 
an analogue of \eqref{defofGsetbisbig} at the scale $\rho_z$, and for this 
we control $\omega(u_z,z, \rho_z)$ in terms of $\omega(u_x,x,r)$.
First observe that $E_z(z,k r) \subset E_x(x, r/2)$
by \eqref{defofkk}; hence we can apply \eqref{logboundw} to $u_z$ in 
$\overline B(z,k r)$, between the radii $\rho_z$ and $k r$; we get that
\[
\omega(u_z,z,\rho_z)\le C\omega(u_z,z, kr)+C\log(k r/\rho_z) \leq C\omega(u_z,z, kr)+C (1+|\log(\tau)|).
\]
Then by \eqref{w} 
\[
\omega(u_z,z, kr)^2 = \fint_{B(z,kr)} |\nabla u_z|^2 \leq C \fint_{E_z(z,kr)} |\nabla u|^2
\leq C \fint_{E_x(x,r)} |\nabla u|^2 \leq C \omega(u_x,x,r)^2
\]
with constants $C$ that depend also on $\lambda$ and $\Lambda$, so 
\begin{equation} \label{boundedtwzbytwx}
\omega(u_z,z,\rho_z) \leq C \omega(u_x,x,r) + C (1+|\log(\tau)|).
\end{equation}

Thus
\begin{align} 
1+ \rho_z^\alpha \omega(u_z,z, \rho_z)^2 &  \stackrel{\eqref{boundedtwzbytwx}}{\leq} 
1+ C\rho_z^\alpha \omega(u_x,x,r)^2 + C\rho_z^{\alpha}(1+|\log\tau|)^2\nonumber\\
&\leq 1 + Cr^\alpha\omega(u_x,x,r)^2 + C(\tau r)^\alpha (1+|\log \tau|)^2\nonumber\\
&\leq 1 + Cr^\alpha \omega(u_x,x,r)^2 + Cr^\alpha [\tau^\alpha (1+|\log \tau|)^2] \nonumber\\
&\leq C(1+r^\alpha \omega(u_x,x,r)^2) \stackrel{\eqref{defofGsetbisbig}}{\leq} \left(\frac{C\tau^n}{C_0r} |b(x,r)|\right)^2 \stackrel{\eqref{batrhozbig}}{\leq} \left(\frac{2C\tau^n}{C_0r}|b(z, \rho_z)|\right)^2.\label{finalboundsontwatrho}\end{align}

Recall that $r \simeq \tau^{-1}\rho_z$ with constants of comparability depending only on $n, \lambda, \Lambda$. Together with \eqref{finalboundsontwatrho}, this remark yields
\begin{align} 
 |b(z, \rho_z)| &\geq \frac{C_0r}{2C\tau^{n}}\left(1+\rho_z^\alpha \omega(u_z,z, \rho_z)^2\right)^{1/2}\nonumber\\
&\geq \frac{C_0\rho_z}{2C\tau^{n+1}}\left(1+\rho_z^\alpha \omega(u_z,z, \rho_z)^2\right)^{1/2}\nonumber\\
&= (C\tau)^{-1}C_0\tau^{-n} \rho_z\left(1+\rho_z^\alpha \omega(u_z,z, \rho_z)^2\right)^{1/2}.\label{bbigatlowerscale}
\end{align}
Here 
$C > 0$ is a constant which depends on $n, \lambda$ and $\Lambda$. 
Therefore we can choose $\tau_1$ so small that  
$(C\tau)^{-1} \geq 10$ above. 
Thus we have \eqref{defofGsetbisbig} at $(z,\rho_z)$ with the constant $10C_0$. We can conclude that $(z,\rho_z) \in \mathcal G(\tau, 10C_0, 3, r_0)$, which is the desired result. 
\end{proof} 

\begin{lemma}\label{samesignatsmallscale} Let $u, x,r$ satisfy the hypothesis of 
Lemma \ref{Gselfimprovement}; in particular $(x,r) \in \mathcal G(\tau, C_0, C_1, r_0)$ for some $C_0 \geq 1$, $C_1\ge 3$ and $\tau \leq \tau_1$. Recall that $b(x,r) \neq 0$ by \eqref{defofGsetbisbig}. 
If $b(x,r) > 0$ then 
\begin{align}
u\geq 0 \mbox{ on } E_x(x,\tau r/3) 
\mbox{ and } u > 0 \mbox{ almost everywhere on } E_x(x,\tau r/3) 
\label{eqn:postitiveatsmallscale} 
\end{align} Similarly, if $b(x,r)< 0$, then
\begin{equation}\label{eqn:negativeatsmallscale} 
u\leq 0 \mbox{ on } E_x(x,\tau r/3) 
\mbox{ and } u  < 0 \mbox{ almost everywhere on } E_x(x,\tau r/3).
\end{equation}
\end{lemma}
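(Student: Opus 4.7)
The plan is to handle only the case $b(x,r)>0$; the case $b(x,r)<0$ is symmetric (apply the same argument to $-u$, which is an almost minimizer of the analogous functional with the roles of $q_+$ and $q_-$ swapped). Fix an arbitrary $w\in E_x(x,\tau r/3)$ and iterate Lemma~\ref{Gselfimprovement} with $z=w$ at every step. A first application at $(x,r)$ produces $\rho^{(1)}\in(\tau k r/2,\tau k r)$ with $(w,\rho^{(1)})\in\mathcal G(\tau,10C_0,3,r_0)$, and, by the bound \eqref{batrhozbig}, $b(w,\rho^{(1)})\ge\tfrac12 b(x,r)>0$. Since $w$ trivially lies in $E_w(w,\tau\rho^{(1)}/3)$ and the smallness threshold $\tau_1$ relevant for the next step depends only on the new $C_1=3$ (so is no more restrictive than the original), we may reapply the lemma at $(w,\rho^{(1)})$. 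Iterating, we obtain a sequence $\rho^{(j)}\to 0$ with $\rho^{(j+1)}\in(\tau k\rho^{(j)}/2,\tau k\rho^{(j)})$, memberships $(w,\rho^{(j)})\in\mathcal G(\tau,10^{j}C_0,3,r_0)$, and $b(w,\rho^{(j)})\ge 2^{-j}b(x,r)>0$ for every $j\ge 1$.

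Since $u$ is continuous on $\Omega$ by Theorem~\ref{T:continuity} and $\partial B(w,\rho^{(j)})$ corresponds via $T_w^{-1}$ to a subset of $\overline B(w,\Lambda^{1/2}\rho^{(j)})$, the spherical averages $b(w,\rho^{(j)})=\fint_{\partial B(w,\rho^{(j)})}u_w$ converge to $u_w(w)=u(w)$ as $j\to\infty$. Combined with $b(w,\rho^{(j)})>0$, this gives $u(w)\ge 0$, establishing the pointwise statement. For the almost-everywhere positivity, suppose for contradiction that $F=\{u=0\}\cap E_x(x,\tau r/3)$ has positive Lebesgue measure. Since $u$ is continuous, $F$ is closed, and the Lebesgue density theorem supplies a density point $w_0\in F$, for which $u(w_0)=0$. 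Apply the iteration above at $w_0$. Membership $(w_0,\rho^{(j)})\in\mathcal G(\tau,10^{j}C_0,3,r_0)$ together with \eqref{defofGsetbisbig} gives the lower bound
\begin{equation*}
b(w_0,\rho^{(j)})\ \ge\ 10^{j} C_0\,\tau^{-n}\rho^{(j)}.
\end{equation*}
On the other hand, since $u(w_0)=0$, the Dini-type continuity estimate \eqref{continuity}, transferred to $u_{w_0}$ via \eqref{defux}, yields
\begin{equation*}
|b(w_0,\rho^{(j)})|\ \le\ \sup_{\partial B(w_0,\rho^{(j)})}|u_{w_0}|\ \le\ C\rho^{(j)}\bigl(1+\log(1/\rho^{(j)})\bigr).
\end{equation*}
Dividing by $\rho^{(j)}$ and using $\rho^{(j)}\ge(\tau k/2)^{j}r$ to bound the logarithm linearly in $j$, we arrive at
\begin{equation*}
10^{j} C_0\tau^{-n}\ \le\ C\bigl(1+j\log(2/(\tau k))+\log(1/r)\bigr),
\end{equation*}
which fails for all sufficiently large $j$ because the left-hand side grows exponentially in $j$ while the right-hand side grows only linearly. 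Hence $|F|=0$.

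The main obstacle is the almost-everywhere statement: the pointwise inequality $u\ge 0$ follows essentially for free from continuity and the iterated same-sign propagation in \eqref{batrhozbig}, but to rule out a closed zero set of positive measure one must really exploit the quantitative tension between the exponential-in-$j$ lower bound on $b(w_0,\rho^{(j)})/\rho^{(j)}$ supplied by successive memberships in $\mathcal G(\tau,10^{j}C_0,3,r_0)$ and the merely logarithmic upper bound available from Theorem~\ref{T:continuity} at a putative zero of $u$. A secondary (minor) point to verify is that the iteration can indeed be carried out with a fixed $\tau$, namely that the smallness threshold $\tau_1$ in Lemma~\ref{Gselfimprovement} at the fixed value $C_1=3$ of subsequent iterations is at least as large as the one governing the initial step.
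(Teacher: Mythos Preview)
Your proof is correct but takes a genuinely different route from the paper's.

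The paper argues as follows: for each $z\in E_x(x,\tau r/3)$ it runs the same iteration you do, getting $(z,\rho_j)\in\mathcal G(\tau,10^jC_0,3,r_0)$ with the same sign propagation. It then defines the ``wrong sign'' set $Z_j=\{y\in B(z,\tau\rho_j):u(y)b(x,r)\le 0\}$ and, using the estimate \eqref{ustarminusbatj} together with the $L^1$ bound \eqref{avgtuminustsuonsmall} and Markov's inequality, shows that $|Z_j|\le C(C_010^j)^{-1}(\tau\rho_j)^n\tau^{n/2}$. Hence the density of $\{u\le 0\}$ at every point of $E_x(x,\tau r/3)$ is zero, so $u>0$ a.e.\ there, and continuity gives $u\ge 0$. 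The key tool is the comparison with the harmonic replacement $(u_z)^\ast_{\rho_j}$ in $L^1$.

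You bypass the harmonic replacement entirely at this stage: you get $u\ge 0$ directly from continuity and the sign of the spherical averages, and then rule out zeros by pitting the exponential-in-$j$ lower bound on $|b(w_0,\rho^{(j)})|/\rho^{(j)}$ coming from membership in $\mathcal G(\tau,10^jC_0,3,r_0)$ against the merely logarithmic upper bound supplied by Theorem~\ref{T:continuity}. This is a clean and slightly more elementary argument. In fact your contradiction uses nothing about $w_0$ beyond $u(w_0)=0$; the Lebesgue density hypothesis is superfluous, and your argument actually shows $u>0$ \emph{everywhere} on $E_x(x,\tau r/3)$, which is stronger than what the lemma claims (and stronger than what the paper proves). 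The paper's approach, on the other hand, is more robust in that it never needs a pointwise continuity modulus---only an integral comparison---which is sometimes an advantage in related settings.
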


\begin{proof}
Let $z\in E_x(x,\tau r/3)$. 
Apply Lemma \ref{Gselfimprovement} to get $(z, \rho_z) \in \mathcal G(\tau, 10C_0, 3, r_0)$. Let $\rho_z = \rho_0$. Iterate Lemma \ref{Gselfimprovement}, $j$ times, each time around the point $z$, to get $(z, \rho_j) \in \mathcal G(\tau, 10^jC_0, 3, r_0)$ where $\rho_j \in ( (\tau k/2)^j r, (\tau k)^j r)$.
By \eqref{defofGsetbisbig}, 
\begin{equation}\label{bbigatrhoj} 
\rho_j^{-1}|b(z, \rho_j)| \geq 10^j C_0 \tau^{-n}(1+\rho_j^\alpha w(u_z,z, \rho_j)^2)^{1/2}.
\end{equation} 

Arguing as before (i.e. obtaining \eqref{batrhozbig} at the
scale $j$) we see that 
\[
|b(z, \rho_j) - b(z, \rho_{j-1})| < \frac{1}{2}|b(z, \rho_{j-1})|,
\]
 that is, $b(z,\rho_j)$ has the same sign as $b(z, \rho_{j-1})$. An induction argument yields that $b(z,\rho_j)$ has the same sign as $b(x,r)$ for all $j$. Set 
\begin{equation}\label{defofZj} 
Z_j = \{y \in B(z, \tau \rho_j)\mid u(y)b(x,r) \leq 0\}
= \{y\in B(z, \tau \rho_j)\mid u(y)b(z,\rho_j)\le 0\}.
\end{equation} 
One should think of this as the subset of $B(z,\tau \rho_j)$ where $u$ has the ``wrong" sign. In particular if $y\in Z_j$, $|u(y)-b(z,\rho_j)|\ge |b(z,\rho_j)|$.
Arguing exactly as in the proof of Lemma \ref{Gselfimprovement} we can prove as in \eqref{ustaryvsb}
(and because we took $\tau$ small enough for \eqref{batrhozbig}) that 
\begin{equation}\label{ustarminusbatj}
|(u_z)^*_{\rho_j}(y) - b(z, \rho_j)| \leq CC_1 \tau |b(z, \rho_j)| 
\leq \frac{1}{4} |b(z, \rho_j)| 
\;\;\; \text{ for }
y\in B(z, \tau \rho_j).
\end{equation} 
Here $(u_z)^*_{\rho_j}$ is the harmonic extension of $u_z$ to $B(z,\rho_j)$.
This implies that $(u_z)^*_{\rho_j}$ shares a sign with $b(z, \rho_j)$ on $B(z,\tau \rho_j)$. Thus, for every $y\in Z_j$ we have 
\begin{equation}\label{uminusustaronZ} |u_z(y) - (u_z)^*_{\rho_j}(y)| \geq |u_z(y)- b(z,\rho_j)| - |b(z, \rho_j)-(u_z)^*_{\rho_j}(y)| \geq \frac{3}{4}|b(z, \rho_j)|.\end{equation} 
In other words, 
$$Z_j \subset \Big\{y\in B(z, \tau \rho_j) \ : \  |u_z(y) - (u_z)^*_{\rho_j}(y)| \geq \frac{3}{4}|b(z, \rho_j)|\Big\}.$$ 
Arguing as in \eqref{ams2}, Markov's inequality combined with \eqref{avgtuminustsuonsmall} yields,  
\begin{eqnarray}\label{estimatethesizeofZtwo}
|Z_j| &\leq &\frac{4}{3|b(z,\rho_j)|} \int_{B(z,\tau \rho_j)} |u_z - (u_z)^*_{\rho_j}|\nonumber\\
&\leq & \frac{C}{|b(z,\rho_j)|}\int_{E_z(z,\tau \rho_j)} |u - (u_z)^*_{\rho_j}\circ T_z| \nonumber\\
&\leq &  \frac{C}{|b(z,\rho_j)|}(\tau \rho_j)^n\tau^{-n/2} \rho_j(\rho_j^\alpha w(u_z,z,\rho_j)^2 + 1)^{1/2}\nonumber\\
&\leq &C(\tau \rho_j)^n 
\frac{
\tau^{-n/2}\rho_j(1+\rho_j^\alpha w(u_z,z, \rho_j)^2)^{1/2}}{|b(z,\rho_j)|}\nonumber\\ 
&\leq&C[C_010^{j}]^{-1}(\tau \rho_j)^n \tau^{n/2}\frac{10^j C_0 \tau^{-n}(1+\rho_j^\alpha w(u_z,z,\rho_j)^2)^{1/2}}{\rho_j^{-1}|b(z, \rho_j)|}\nonumber\\
&\stackrel{\eqref{bbigatrhoj}}{\leq}&C[C_010^{j}]^{-1}(\tau \rho_j)^n \tau^{n/2}.
\end{eqnarray}


To simplify the discussion assume that $b(x,r) > 0$ and thus $b(z, \rho_j) > 0$ for all $j$. 
Then $Z_j = \{u \leq 0\} \cap B(z,\tau \rho_j)$. Divide both sides of \eqref{estimatethesizeofZtwo} by $|B(z, \tau \rho_j)| \simeq (\tau \rho_j)^n$ and then let $j \rightarrow \infty$ to get that 

\begin{equation}\label{nolesbesguepoints}
\lim_{j\rightarrow \infty} \frac{|\{u \leq 0\} \cap B(z, \tau \rho_j)|}{|B(z,\tau \rho_j)|} = \lim\limits_{j\rightarrow\infty}\frac{C|Z_j|}{(\tau \rho_j)^n} = 0 \;\;\; \text{ for all } z\in E_x(x,\tau r/3).
\end{equation}
Thus $|\{u\le 0\}\cap E_x(x,\tau r/3)|=0$. Hence $u>0$ a.e on $E_x(x,\tau r/3)$ and by continuity $u\ge 0$ on $E_x(x,\tau r/3)$.
The case where $b(x,r) < 0$ follows in the same way. 
\end{proof}

For the next lemma we use Lemma \ref{samesignatsmallscale} to get some regularity
for $u$ near a point $x$ such that $(x,r) \in \mathcal{G}(\tau,C_0,C_1,r_0)$, with the same method as
for the local regularity of $u$ away from the free boundary.

\begin{lemma}\label{L:4.3} There exist constants $ k_1\in(0, k/2)$, depending only
on $\lambda$ and $\Lambda$, and $\tau_2\in (0,\tau_1)$, with $\tau_1$ as in Lemmas \ref{Gselfimprovement} and \ref{samesignatsmallscale}
with the following properties.
Let $u$ be an almost minimizer for $J$ in $\Omega$.
Let $(x,r)\in \mathcal{G}(\tau,C_0,C_1,r_0)$ for some $\tau\in(0,\tau_2)$ and $C_0\ge 1$. 
Then for $z\in B(x,\tau r/10)$ and $s\in (0,k_1\tau r)$,
\begin{equation}\label{4.36}
\omega(u,z,s)\le C\left(\tau^{-\frac{n}{2}}\omega(u_x,x,r)+r^{\frac{\alpha}{2}}\right),
\end{equation}
and for $y,z\in B(x,\tau r/10)$,
\begin{equation}\label{4.37}
|u(y)-u(z)|\le C\left(\tau^{-\frac{n}{2}}\omega(u_x,x,r)+r^{\frac{\alpha}{2}}\right)|y-z|.
\end{equation}
Here $C=C(n,\kappa,\alpha,\lambda,\Lambda,r_0)$. Finally, there is a constant $C(\tau, r)$ 
depending on $n,\kappa,\alpha,r_0,\tau, r,\lambda,\Lambda$, such that
\begin{equation}\label{4.38}
|\nabla u(y)-\nabla u(z)|\le C(\tau, r)(\omega(u_x,x,r)+1)|y-z|^{\beta},
\end{equation}
 for any $y,z\in B(x,\tau r/10)$, 
 where as before $\beta=\frac{\alpha}{n+2+\alpha}$.
\end{lemma}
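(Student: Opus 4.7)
Plan. By Lemma \ref{samesignatsmallscale}, $u$ has a definite sign on $E_x(x, \tau r/3)$; by symmetry we may assume $u \geq 0$ there. The strategy is to re-run the arguments of Theorems \ref{T:Lippositive} and \ref{T:C1beta} on balls centered at $z \in B(x, \tau r/10)$ of radius $\sim \tau r$, while tracking the quantitative dependence of the constants on $\omega(u_x, x, r)$ and $\tau$.

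For \eqref{4.36}, choose $k_1 \in (0, k/2)$ small enough (depending only on $\lambda, \Lambda$) so that, for every $z \in B(x, \tau r/10)$, one has $\overline{E}_z(z, C k_1 \tau r) \subset E_x(x, \tau r/3)$, where $C = C(\lambda, \Lambda)$ is large enough that the neighborhood required to apply \eqref{equiv3.8g} to $u_z$ at scale $k_1 \tau r$ is contained in this set. Since $u \geq 0$ on $E_x(x, \tau r/3)$, the proof of Theorem \ref{T:Lippositive} applied to $u_z$ on $B(z, k_1 \tau r)$ yields
\[
\omega(u_z, z, s') \leq C\, \omega(u_z, z, k_1 \tau r) + C r^{\alpha/2} \qquad \text{for } 0 < s' \leq k_1 \tau r.
\]
The key quantitative step is the estimate
\[
\omega(u_z, z, k_1 \tau r)^2 \leq C \fint_{E_z(z, k_1 \tau r)} |\nabla u|^2 \leq \frac{C |E_x(x, r)|}{|E_z(z, k_1 \tau r)|} \fint_{E_x(x, r)} |\nabla u|^2 \leq C \tau^{-n} \omega(u_x, x, r)^2,
\]
using the inclusion $E_z(z, k_1 \tau r) \subset E_x(x, r)$ and the volume ratio $\sim \tau^{-n}$. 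Translating $\omega(u_z, z, s')$ back via $T_z$ gives \eqref{4.36}. Then \eqref{4.37} follows by sending $s \to 0$ in \eqref{4.36} at a.e.\ Lebesgue point of $|\nabla u|^2$ in $B(x, \tau r/10)$ to obtain the pointwise a.e.\ bound $|\nabla u| \leq C(\tau^{-n/2}\omega(u_x, x, r) + r^{\alpha/2})$, and integrating this $L^\infty$ bound along line segments in the convex set $B(x, \tau r/10)$.

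For \eqref{4.38}, repeat the argument of Theorem \ref{T:C1beta} with ambient ball $B_0 = B(x, \tau r/20)$, so that $4B_0 \subset E_x(x, \tau r/3)$, where $u \geq 0$. The Lipschitz bound on $2B_0$ required by that proof (playing the role of $C(B_0)$ in the passage preceding \eqref{equiv3.25}) is precisely the one furnished by \eqref{4.37}, hence controlled by $C(\tau^{-n/2}\omega(u_x, x, r) + r^{\alpha/2})$. The smallness hypothesis of Theorem \ref{T:C1beta} on the ambient radius, together with the fact that here the working scale is $\tau r$ rather than a universal $r_0$, accounts for the dependence of $C(\tau, r)$ on both $\tau$ and $r$. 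The main obstacle is the careful bookkeeping of constants through the three changes of variables (between $u$, $u_x$, and $u_z$) and through the scaling from outer radius $r$ to inner radius $\tau r$; in particular, extracting the precise power $\tau^{-n/2}$ (and no worse) in \eqref{4.36} requires that the volume ratio $\tau^{-n}$ above is the only unfavorable $\tau$-dependence that enters.
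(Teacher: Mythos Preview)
Your proposal is correct and follows essentially the same approach as the paper's proof: invoke Lemma~\ref{samesignatsmallscale} to obtain a definite sign on $E_x(x,\tau r/3)$, then re-run the arguments of Theorems~\ref{T:Lippositive} and~\ref{T:C1beta} inside this region, tracking the $\tau^{-n/2}$ dependence through the volume ratio of the nested ellipsoids. One small point of precision: the relevant input from Lemma~\ref{samesignatsmallscale} is not merely $u\geq 0$ but $u>0$ almost everywhere on $E_x(x,\tau r/3)$, since this is what forces $\chi_{\{u>0\}}\equiv 1$ a.e.\ and allows the $q_+^2$ terms to cancel when comparing $u$ with its harmonic replacement (cf.\ \eqref{aaa} in the paper); your write-up should make this explicit.
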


\begin{proof}
 Let $u$, $x$ and $r$ be as in the statement and $z\in B(x,\tau r/3)$. 
 By taking $\tau_2$ small enough Lemmas \ref{Gselfimprovement} and \ref{samesignatsmallscale} hold when we 
 replace $k$ by $2k_1$ (see \eqref{bjad} and \eqref{bjae}).
 Thus we can find
$\rho\in (k_1 \tau r, 2 k_1 \tau r)$ such that such that $(z,\rho)\in \mathcal{G}(\tau, 10C_0,3,r_0)$.
 Since $b(x,r)\ne 0$ by \eqref{defofGsetbisbig}, we can assume $b(x,r)>0$ (the other case is similar). 
 By Lemma~\ref{samesignatsmallscale}, $u\ge 0$ in $E_x(x,\tau r /3)$ and 
 $u>0$ almost everywhere in $E_x(x,\tau r /3)$. 
 
 Assume now that $z\in B(x,\tau r/6)$, and apply \eqref{defofkk} with the radius $\tau r/2$;
 we get that $E_z(z, k \tau r/2) \subset E_x(x,\tau r /4)$ and so $u>0$ almost everywhere 
 on $E_z(z, k \tau r/2)$.
 This means that in the definition \eqref{eqn:jonballg} of our functional,
\begin{equation}\label{aaa} 
J_{E,z,k \tau r/2}(u)=
\int_{E_z(z,\tau r/2)}\left\langle A\nabla u,\nabla u\right\rangle+q^2_+ 
\end{equation} 
with a full contribution for $q^2_+$. The same thing holds for other ellipsoids contained in $E_x(x,\tau r /4)$,
and in particular smaller ellipsoids centered at $z$.
In Section \ref{S:C1beta}, positivity almost everywhere and its consequence \eqref{aaa}
were the only way we ever used the fact that ellipsoids are contained in $\{u_x>0\}$. That is,
we can repeat the proofs of that section as long as our ellipsoids stay inside $E_x(x,\tau r /4)$. 
In particular, if we choose $k_2$ small enough (depending on $\lambda$ and $\Lambda$), 
and set $r_2 = k_2 \tau r$, the proof of \eqref{equiv3.8g} also yields
\begin{equation} \label{equiv3.8gb}
\omega(u_z,z,t)\le C\omega(u_z,z,r_2)+Cr_2^{\alpha/2} 
\ \text{ for } 0<t \le r_2,
\end{equation}
because our earlier condition that $\overline{B}(z,2\Lambda_z^{1/2}\lambda_z^{-1/2}r_2)
\subset E_z(z,\tau r/2)$, where $\lambda_z$ and $\Lambda_z$ are easily estimated in terms of
$\lambda$ and $\Lambda$, is satisfied. Now observe that
$\omega(u,z,s) \leq C \omega(u_z,z, \lambda^{-1/2} s)$, by \eqref{w} and \eqref{defTE},
and $\omega(u_z,z,r_2) \leq C \tau^{-\frac{n}{2}}\omega(u_x,x,r)$, for the same reasons;
\eqref{4.36} follows provided $   s\le \lambda^{1/2}k_2\tau r$. Thus $k_1=k_2\lambda^{1/2}$.

Next \eqref{4.37} follows from \eqref{4.36}, because $\nabla u(z)$ can be computed almost 
everywhere as limits of averages of $u$, which are dominated by $\limsup_{s \to 0} \omega(u,z,s)$.

The local H\"older estimate for $\nabla u$ in \eqref{4.38} is very similar to the proof of Theorem \ref{T:C1beta}.
We only need to make sure that we never get outside of the ellipsoid $E_x(x,\tau r /4)$,
where we know that $u > 0$ almost everywhere.
\end{proof}

\begin{lemma}\label{L:4.4} Let $u$ be an almost minimizer for $J$ in $\Omega$. There exists $K_2=K_2(\lambda,\Lambda)\ge 2$ such that for each choice of $\gamma\in(0,1)$, $\tau>0$ and $C_0\ge 1$, we can find $r_0,\eta$ small and $K\ge 1$ with the following property: if $x\in\Omega$ and $r>0$ are such that $0<r\le r_0$, $B(x,K_2r)\subset \Omega$ and
\begin{equation}\label{4.55}
|b(u_x,x,r)|\ge \gamma r(1+\omega(u_x,x,r)),
\end{equation}
and
\begin{equation}\label{4.56}
\omega(u_x,x,r)\ge K,
\end{equation}
then there exists $\rho\in \left(\frac{\eta r}{2},\eta r\right)$ such that $(x,\rho)\in \mathcal{G}(\tau, C_0, 3,r_0)$.
\end{lemma}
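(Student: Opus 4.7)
The strategy is to use the harmonic replacement $(u_x)_r^\ast$ of $u_x$ on $B(x,r)$ as a bridge: we already know $u_x$ is close to $(u_x)_r^\ast$ in $L^2$ by the almost‑minimality plus H\"older continuity of $A$, and $(u_x)_r^\ast$ is itself very rigid since it is harmonic. Applying these facts on a well‑chosen sub‑ball $B(x,\rho)$ with $\rho\in(\eta r/2,\eta r)$ will let us control the three defining inequalities of $\mathcal G(\tau,C_0,3,r_0)$ at $(x,\rho)$.

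First I choose $K_2=2\Lambda^{1/2}$ so that $B(x,K_2 r)\subset\Omega$ forces $E_x(x,2r)\subset\Omega$, which is the ambient inclusion required in the definition of $\mathcal G$ (and the condition \eqref{bballe} at scale $\rho\le r$ will then be automatic once $\eta\le 1$). Next I apply the Caccioppoli/harmonic‑replacement inequality \eqref{gradientcomparisonrbis}, which gives
\begin{equation*}
\int_{B(x,r)}|\nabla u_x-\nabla (u_x)_r^\ast|^2\le Cr^{\alpha}\int_{B(x,r)}|\nabla u_x|^2+Cr^n,
\end{equation*}
followed by Poincar\'e's inequality, to obtain
\begin{equation*}
\fint_{B(x,r)}|u_x-(u_x)_r^\ast|^2\le Cr^{2}\bigl(r^{\alpha}\omega(u_x,x,r)^2+1\bigr).
\end{equation*}
A Chebyshev argument (integrating over the annulus $B(x,\eta r)\setminus B(x,\eta r/2)$ exactly as in \eqref{bjad}--\eqref{bjae}) then selects a radius $\rho\in(\eta r/2,\eta r)$ with
\begin{equation*}
\fint_{\partial B(x,\rho)}|u_x-(u_x)_r^\ast|\le C\eta^{-n/2}r\bigl(r^{\alpha}\omega(u_x,x,r)^2+1\bigr)^{1/2}.
\end{equation*}

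The crucial rigidity step is the mean value property: since $(u_x)_r^\ast$ is harmonic in $B(x,r)$, one has $b((u_x)_r^\ast,x,s)=(u_x)_r^\ast(x)=b(u_x,x,r)$ for every $s\le r$, and standard interior gradient estimates yield $\sup_{B(x,\eta r)}|\nabla(u_x)_r^\ast|\le C\omega(u_x,x,r)$ once $\eta\le 1/4$. Combining these with the Chebyshev estimate above gives
\begin{equation*}
|b(u_x,x,\rho)-b(u_x,x,r)|\le C\eta^{-n/2}r\bigl(r^{\alpha}\omega^2+1\bigr)^{1/2},
\end{equation*}
and likewise
\begin{equation*}
b^+(u_x,x,\rho)\le |b(u_x,x,r)|+C\eta r\,\omega(u_x,x,r)+C\eta^{-n/2}r\bigl(r^{\alpha}\omega^2+1\bigr)^{1/2}.
\end{equation*}
Finally I use Lemma \ref{L:logdecay} to get $\omega(u_x,x,\rho)\le C\omega(u_x,x,r)+C|\log\eta|$, so $\rho^{\alpha}\omega(u_x,x,\rho)^2\lesssim r^{\alpha}\omega^2+r^{\alpha}|\log\eta|^2$.

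The proof is then a parameter chase using the two hypotheses \eqref{4.55}, which says $|b(u_x,x,r)|\ge\gamma r(1+\omega)$, and \eqref{4.56}, which says $\omega\ge K$. I choose $\eta$ small so that $C\eta\le\gamma/8$; then $r_0$ small (depending on $\eta,\gamma,\tau,C_0$) so that $r^{\alpha/2}$ absorbs all factors involving $\eta^{-n/2}$ and $\tau^{-n}$; and finally $K$ large (depending on $\eta,\tau,C_0,\gamma$) so that $\omega\ge K$ dominates the constant‑order error terms. Under these choices, $|b(u_x,x,\rho)|\ge\tfrac12|b(u_x,x,r)|\ge\tfrac\gamma2 r(1+\omega)$, which gives both \eqref{defofGsetbisbig} at scale $\rho$ (with constant $C_0$) and, upon comparing $b^+$ to $|b|$, also \eqref{defofGsetbplusissmall} with $C_1=3$. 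The main obstacle is the bookkeeping: the right‑hand sides of the Chebyshev/harmonic estimates scale as $\eta^{-n/2}(r^\alpha\omega^2+1)^{1/2}$, while the lower bound $|b|\gtrsim\gamma r(1+\omega)$ only scales linearly in $\omega$, so it is essential that the smallness of $r_0$ and the largeness of $K$ be calibrated so that the $1+\omega$ on the right truly dominates, uniformly in the possibly very large $\omega$.
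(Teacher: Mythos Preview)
Your proposal is correct and follows essentially the same approach as the paper: harmonic replacement $(u_x)_r^\ast$, the mean value identity $(u_x)_r^\ast(x)=b(u_x,x,r)$ together with interior gradient bounds, a Chebyshev/Fubini selection of $\rho\in(\eta r/2,\eta r)$ on which $\fint_{\partial B(x,\rho)}|u_x-(u_x)_r^\ast|$ is small, the log decay of $\omega$ from Lemma~\ref{L:logdecay}, and then a parameter chase. The only cosmetic difference is the order of parameter choices: the paper fixes $\eta$ small depending on $C_0,\tau,\gamma$ (so that $C_0\tau^{-n}\cdot 2\eta/\gamma\le\tfrac12$) and then takes $r_0$ small and $K$ large, whereas you fix $\eta$ depending only on $\gamma$ and push the $C_0,\tau$ dependence into $r_0$ and $K$; both orderings work.
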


\begin{proof} Let $\eta\in (0,10^{-2})$ be small, to be chosen later, and let $(x,r)$ be as in the statement. Let $(u_x)_r^*$ be the harmonic extension of $u_x$ to $ B(x,r)$. Notice that $|\nabla (u_x)_r^*|^2$ is subharmonic on $B(x,r)$, and $\int_{B(x,r)}|\nabla (u_x)_r^*|^2\le \int_{B(x,r)}|\nabla u_x|^2$. For $y\in B(x,\eta r)$,
\begin{equation}\label{4.57}
|\nabla (u_x)_r^*(y)|^2\le\fint_{B(y,r/2)}|\nabla (u_x)_r^*|^2\le 2^n\fint_{B(x,r)}|\nabla (u_x)_r^*|^2\le 2^n\fint_{B(x,r)}|\nabla u_x|^2= 2^n\omega(u_x,x,r)^2.
\end{equation}
Since $(u_x)_r^*$ is harmonic in $B(x,r)$, $\displaystyle{(u_x)_r^*(x)=\fint_{\partial B(x,r)}u_x=b(u_x,x,r)}$. Therefore for $y\in B(x,\eta r)$.
\begin{equation}\label{4.58}
|(u_x)_r^*(y)-b(u_x,x,r)|=|(u_x)_r^*(y)-(u_x)_r^*(x)|\le \eta r\sup_{B(x,\eta r)}|\nabla (u_x)_r^*|\le 2^{n/2}\eta r\omega(u_x,x,r).
\end{equation}
We will choose $\eta$ so small that $2^{n/2}\eta<\gamma/4$. Then \eqref{4.55} and \eqref{4.58} yield
\begin{equation}\label{5.59}
|(u_x)_r^*(y)-b(u_x,x,r)|\le 2^{n/2}\eta r\omega(u_x,x,r)\le\frac{1}{4}\gamma r\omega(u_x,x,r)\le \frac{1}{4}|b(u_x,x,r)|.
\end{equation}
In particular, $(u_x)_r^*$ has the same sign as $b(u_x,x,r)$ on $B(x,\eta r)$ and
\begin{equation}\label{4.60}
\frac{5}{4}|b(u_x,x,r)|\ge|(u_x)_r^*(y)|\ge\frac{3}{4}|b(u_x,x,r)| \ \text{ for } \ y\in B(x,\eta r).
\end{equation}

Since 
\[
\int_{B(x,\eta r)\setminus B(x,\eta r/2)}|u_x-(u_x)_r^*|=\int_{\eta r/2}^{\eta r}\int_{\partial B(x,s)}|u_x-(u_x)_r^*|,
\]
there exists $\rho\in \left(\frac{\eta r}{2},\eta r\right)$ such that
\[
\int_{\partial B(x,\rho)}|u_x-(u_x)_r^*|\le \frac{2}{\eta r}\int_{B(x,\eta r)\setminus B(x,\eta r/2)}|u_x-(u_x)_r^*|=\frac{2}{\eta r}\int_{\eta r/2}^{\eta r}\int_{\partial B(x,s)}|u_x-(u_x)_r^*|.
\]
Poincar\'{e}'s inequality and Cauchy-Schwarz lead to
\begin{align}
\int_{\partial B(x,\rho)}|u_x-(u_x)_r^*|&\le \frac{2}{\eta r}\int_{\eta r/2}^{\eta r}\int_{\partial B(x,s)}|u_x-(u_x)_r^*|\le \frac{2}{\eta r}\int_{B(x,\eta r)}|u_x-(u_x)_r^*|\nonumber\\
&\le C\int_{B(x,\eta r)}|\nabla u_x-\nabla (u_x)_r^*|\le C(\eta r)^{n/2}\left(\int_{B(x,\eta r)}|\nabla u_x-\nabla (u_x)_r^*|^2\right)^{1/2}\nonumber\\
&\le C(\eta r)^{n/2}\left(\int_{B(x,r)}|\nabla u_x-\nabla (u_x)_r^*|^2\right)^{1/2}.\label{4.61}
\end{align}

By \eqref{gradientcomparison3}
\begin{equation}\label{4.62}
\fint_{B(x,r)}|\nabla u_x-\nabla (u_x)_r^*|^2\le Cr^{\alpha}\fint_{B(x,r)}|\nabla u_x|^2+C=Cr^{\alpha}\omega(u_x,x,r)^2+C.
\end{equation}
Combining \eqref{4.62} and \eqref{4.61} yields
\begin{equation}\label{4.63}
\int_{\partial B(x,\rho)}|u_x-(u_x)_r^*|\le C\eta^{n/2}r^n(1+r^{\alpha}\omega(u_x,x,r))^{1/2}.
\end{equation}
Since $r\le r_0$, then $r^{\alpha}\le r_0^{\alpha}$ and by \eqref{4.63} and \eqref{4.56},
\begin{align}
\fint_{\partial B(x,\rho)}|u_x-(u_x)_r^*|&\le C(\eta r)^{1-n}\int_{\partial B(x,\rho)}|u_x-(u_x)_r^*|\le C\eta^{1-\frac{n}{2}}r(1+r_0^{\alpha}\omega(u_x,x,r)^2)^{1/2}\nonumber\\
&\le C\eta^{1-\frac{n}{2}}r\omega(u_x,x,r)(K^{-2}+r_0^{\alpha})^{1/2}.\label{4.64}
\end{align}
We choose $K$ large enough and $r_0$ small enough, both depending on $\gamma$ and $\eta$ (recall that $\eta$ depends on $\gamma$ only), so that in \eqref{4.64},
\begin{equation}\label{4.65}
C\eta^{1-\frac{n}{2}}(K^{-2}+r_0^{\alpha})^{1/2}\le\frac{\gamma}{4}.
\end{equation}
Then by \eqref{4.64} and \eqref{4.55},
\begin{equation}\label{4.66}
\fint_{\partial B(x,\rho)}|u_x-(u_x)_r^*|\le\frac{\gamma}{4}r\omega(u_x,x,r)\le\frac{|b(u_x,x,r)|}{4}.
\end{equation}
As mentioned above, $(u_x)_r^*$ has the same sign as $b(u_x,x,r)$ in $B(x,\eta r)$. Since $\rho<\eta r$, $(u_x)_r^*$ does not change sign in $\partial B(x,\rho)$. By \eqref{4.60} and \eqref{4.66},
\begin{align}
|b(u_x,x,\rho)|&=\Big|\fint_{\partial B(x,\rho)}u_x\Big|\ge \Big|\fint_{\partial B(x,\rho)}(u_x)_r^*\Big|-\fint_{\partial B(x,\rho)}|u_x-(u_x)_r^*|\nonumber\\
&=\fint_{\partial B(x,\rho)}|(u_x)_r^*|-\fint_{\partial B(x,\rho)}|u_x-(u_x)_r^*|\nonumber\\
&\ge \frac{3}{4}|b(u_x,x,r)|-\frac{1}{4}|b(u_x,x,r)|=\frac{1}{2}|b(u_x,x,r)|.\label{4.67}
\end{align}
The same computations yield
\begin{align}
|b^+(u_x,x,\rho)|&=\fint_{\partial B(x,\rho)}|u_x|\le\fint_{\partial B(x,\rho)}|(u_x)_r^*|+\fint_{\partial B(x,\rho)}|u_x-(u_x)_r^*|\nonumber\\
&\le\frac{5}{4}|b(u_x,x,r)|+\frac{1}{4}|b(u_x,x,r)|\le\frac{3}{2}|b(u_x,x,r)|.\label{4.68}
\end{align}
This shows that $(x,\rho)$ satisfies \eqref{defofGsetbplusissmall}  with $C_1=3$. We still need to check \eqref{defofGsetbisbig}.
By \eqref{4.67} and \eqref{4.55},

\begin{equation}\label{4.69}
\frac{|b(u_x,x,\rho)|}{\rho}\ge\frac{1}{2\rho}|b(u_x,x,r)|\ge\frac{\gamma r}{2\rho}(1+\omega(u_x,x,r))\ge\frac{\gamma}{2\eta}(1+\omega(u_x,x,r)).
\end{equation}
We now need a lower bound for $\omega(u_x,x,r)$ in terms of $\omega(u_x,x,\rho)$.
Applying \eqref{iterationw} to $u_x$ (which can be done as long as $B(x,r)\subset\Omega_x$), for any $j\ge 0$ integer, we have
\begin{equation}\label{4.70}
\omega(u_x,x,2^{-j-1}r)\le C\omega(u_x,x,r)+Cj
\end{equation}
We apply this to the integer $j$ such that $2^{-j-2}r\le \rho<2^{-j-1}r$ and get, recalling that $\rho\in(\frac{\eta r}{2}, \eta r)$, that
\begin{equation}\label{4.71}
\omega(u_x,x,\rho)\le 2^{n/2}\omega(u_x,x,2^{-j-1}r)\le C(\omega(u_x,x,r)+Cj)\le \omega(u_x,x,r)+C|\log\eta|.
\end{equation}
Then \eqref{4.69} yields
\begin{align}
(1+\rho^{\alpha}\omega(u_x,x,\rho)^2)^{1/2}&\le 1+\rho^{\alpha/2}\omega(u_x,x,\rho)\le 1+Cr_0^{\alpha/2}\omega(u_x,x,r)+Cr_0^{\alpha/2}|\log\eta|\nonumber\\
&\le (1+Cr_0^{\alpha/2}+Cr_0^{\alpha/2}|\log\eta|)(1+\omega(u_x,x,r))\nonumber\\
&\le (1+Cr_0^{\alpha/2}+Cr_0^{\alpha/2}|\log\eta|)\frac{2\eta}{\gamma}\frac{|b(u_x,x,\rho)|}{\rho}.\label{4.72}
\end{align}
Multiplying by $C_0\tau^{-n}$ we obtain
\begin{equation}\label{4.73}
C_0\tau^{-n}(1+\rho^{\alpha}\omega(u_x,x,\rho)^2)^{1/2}\le C_2\frac{|b(u_x,x,\rho)|}{\rho},
\end{equation}
where 
\begin{equation}\label{4.74}
C_2=C_0\tau^{-n}(1+Cr_0^{\alpha/2}+Cr_0^{\alpha/2}|\log\eta|)\frac{2\eta}{\gamma}.
\end{equation}
This shows that \eqref{defofGsetbisbig} holds for $(x,\rho)$ if $C_2\le 1$. We choose $\eta$ so small (depending on $C_0, \tau, \gamma$), so that $C_0\tau^{-n}\frac{2\eta}{\gamma}\le\frac{1}{2}$ and $\eta 2^{n/2}\le\frac{\gamma}{4}$, and then $r_0$ so small and $K$ so large, depending on $\eta$, so that $1+Cr_0^{\alpha/2}+Cr_0^{\alpha/2}|\log\eta|\le 2$ and  $C\eta^{1-\frac{n}{2}}(K^{-2}+r_0^{\alpha})^{1/2}\le\frac{\gamma}{4}$, see \eqref{4.65}. By using an upper bound for $r_0$ we get rid of the dependence on it. Therefore $(x,\rho)\in\mathcal{G}(\tau, C_0, 3, r_0)$, completing our proof.
\end{proof}

\section{Local Lipschitz regularity for one-phase almost minimizers}\label{S:onephaselip}

\begin{lemma}\label{L:5.1} Let $u$ be an almost minimizer for $J^+$ in $\Omega$. Let $\theta\in(0,1/2)$. There exist $\gamma>0$, $K_1>1$, $\beta\in (0,1)$ and $r_1>0$ such that if $x\in \Omega$ and $0<r\le r_1$ are such that $B(x,r)\subset \Omega_x$,
\begin{equation}\label{5.1}
b(u_x,x,r)\le \gamma r(1+\omega(u_x,x,r)),
\end{equation}
and
\begin{equation}\label{5.2}
\omega(u_x,x,r)\ge K_1, 
\end{equation}
then
\begin{equation}\label{5.3}
\omega(u_x,x,\theta r)\le \beta \omega(u_x,x,r).
\end{equation}
\end{lemma}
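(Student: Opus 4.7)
The plan is to compare $u_x$ on $B(x,r) \subset \Omega_x$ with the harmonic extension $h = (u_x)^*_r$ of $u_x|_{\partial B(x,r)}$, in the spirit of Section \ref{S:continuity} but exploiting the non-negativity of $u_x$ (inherited from $u\ge 0$ for $J^+$). Since $u_x\ge 0$ on $\partial B(x,r)$, the maximum principle gives $h\ge 0$ in $B(x,r)$, and the mean value property gives $h(x) = b(u_x,x,r)$, which is small: by \eqref{5.1},
\begin{equation*}
h(x) = b(u_x,x,r) \le \gamma r(1+\omega(u_x,x,r)).
\end{equation*}

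First I would exploit the non-negativity of $h$ via Harnack's inequality and standard interior gradient estimates for harmonic functions: for $\theta \in (0,1/2)$ there is $C(\theta)$ such that $\sup_{B(x,3r/4)} h \le C\, h(x) \le C\gamma r(1+\omega(u_x,x,r))$, and then $\sup_{B(x,\theta r)}|\nabla h| \le C(\theta) r^{-1}\sup_{B(x,3r/4)} h \le C(\theta)\gamma(1+\omega(u_x,x,r))$. Averaging yields
\begin{equation*}
\fint_{B(x,\theta r)}|\nabla h|^2 \le C(\theta)^2 \gamma^2 (1+\omega(u_x,x,r))^2.
\end{equation*}

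Second, I would bring in almost-minimality. By Lemma \ref{lem: changingvariables} and Remark \ref{rem: changingvariables}, $u_x$ is an almost minimizer (for the functional $J^+_x$ associated to $A_x$, with $A_x(x)=I$) in $\Omega_x$, so the gradient comparison \eqref{gradientcomparisonr} applies directly to $u_x$ in $B(x,r)$:
\begin{equation*}
\int_{B(x,r)}|\nabla u_x - \nabla h|^2 \le Cr^\alpha \int_{B(x,r)} |\nabla u_x|^2 + Cr^n = Cr^n\bigl(r^\alpha \omega(u_x,x,r)^2 + 1\bigr),
\end{equation*}
which, divided by $|B(x,\theta r)|$, gives $\fint_{B(x,\theta r)}|\nabla u_x - \nabla h|^2 \le C\theta^{-n}(r^\alpha \omega(u_x,x,r)^2 + 1)$. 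Combining both estimates through the triangle inequality in $L^2$, and using the hypotheses $\omega(u_x,x,r)\ge K_1\ge 1$ and $r\le r_1$ (so that $(1+\omega)^2\le 4\omega^2$ and $1\le K_1^{-2}\omega^2$), I arrive at
\begin{equation*}
\omega(u_x,x,\theta r)^2 \le \bigl[\,8C(\theta)^2\gamma^2 + 2C\theta^{-n} r_1^\alpha + 2C\theta^{-n} K_1^{-2}\,\bigr]\,\omega(u_x,x,r)^2.
\end{equation*}

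To finish, I would fix $\beta\in(0,1)$ (say $\beta^2 = 1/2$), then choose, in this order, $\gamma$ small enough that the first term is at most $\beta^2/3$, $r_1$ small enough that the second term is at most $\beta^2/3$, and $K_1$ large enough that the third term is at most $\beta^2/3$; all depending on $\theta$ and the structural constants $n,\kappa,\alpha,\lambda,\Lambda,\|q_+\|_\infty,\|A\|_{C^{0,\alpha}}$. The only subtlety — and the main obstacle worth writing out carefully — is the bookkeeping that allows us to invoke \eqref{gradientcomparisonr} for $u_x$ rather than $u$; this is resolved exactly as in Lemmas \ref{L:4.3} and \ref{L:4.4} by working in the affine image $\Omega_x$ where $A_x(x)=I$, so the estimates of Section \ref{S:continuity} apply verbatim to $u_x$.
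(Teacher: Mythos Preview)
Your proposal is correct and in fact takes a cleaner route than the paper's own proof. Both arguments compare $u_x$ to its harmonic extension $h=(u_x)^*_r$ via \eqref{gradientcomparisonr} (applied to $u_x$ in $\Omega_x$, where $A_x(x)=I$), so the ``almost-minimality'' input is identical. The difference is in how the energy of $h$ on $B(x,\theta r)$ is controlled. You exploit $h\ge 0$ through Harnack's inequality to get $\sup_{B(x,3r/4)} h \le C\,h(x)=C\,b(u_x,x,r)$, and then an interior gradient estimate gives a pointwise bound on $|\nabla h|$ in $B(x,\theta r)$ directly in terms of $b$; this collapses the whole proof to a single string of inequalities. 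The paper instead subtracts the affine part of $h$ at $x$, writes a Pythagorean identity for $\fint|\nabla h|^2$, and runs a dichotomy on whether the non-affine remainder carries at least $\eta^2\omega^2$ of the energy; non-negativity is only invoked through the weaker fact $\fint_{\partial B(x,r)}|u_x|=b(u_x,x,r)$, which feeds into derivative bounds at the center (see \eqref{5.10} and \eqref{5.18}). Your Harnack shortcut is specific to the one-phase setting (it would fail for Lemma~\ref{twosidedlipschitzlemma}), whereas the paper's decomposition mirrors more closely the structure used later in the two-phase case; but for Lemma~\ref{L:5.1} as stated your argument is both valid and more economical.
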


\begin{proof} Recall from the definition of $K_{\text{loc}}^+(\Omega)$ that almost minimizers for $J^+$ are non-negative almost everywhere. Since Theorem \ref{T:continuity} ensures that almost minimizers are continuous (after modification on a set of measure zero), almost minimizers must be non-negative everywhere. Let $x\in\Omega$ and $r\le r_1$ be such that $B(x,r)\subset \Omega_x$. Let $(u_x)_r^*$ denote the harmonic extension of $u_x$ to $B(x,r)$. Notice that, by the maximum principle, $(u_x)_r^*\ge 0$ in $\overline{B}(x,r)$. Given $y\in B(x,r)$, let
\[
a(y)=(u_x)_r^*(x)+\langle \nabla (u_x)_r^*(x),y-x\rangle.
\]
Let also
\begin{equation}\label{5.4}
(v_x)_r^*(y)=(u_x)_r^*(y)-a(y)=(u_x)_r^*(y)-(u_x)_r^*(x)-\langle \nabla (u_x)_r^*(x),y-x\rangle.
\end{equation}
Notice that $(v_x)_r^*$ is harmonic in $B(x,r)$, $(v_x)_r^*(x)=0$ and $\nabla (v_x)_r^*(x)=0$. 
As in \eqref{tt-4}, we obtain that for $0<s\le r$,
\begin{equation}\label{5.5}
\omega(u_x,x,s)\le C\left(\frac{r}{s}\right)^{n/2}r^{\alpha/2}\omega(u_x,x,r)+C\left(\frac{r}{s}\right)^{n/2}+\left(\fint_{B(x,s)}|\nabla (u_x)_r^*|^2\right)^{1/2}.
\end{equation}
We now evaluate $\displaystyle{\fint_{B(x,s)}|\nabla (u_x)_r^*|^2}$. By \eqref{5.4} and because $\nabla a=\nabla (u_x)_r^*(x)$,
\begin{align}
\fint_{B(x,s)}|\nabla (u_x)_r^*|^2&=\fint_{B(x,s)}|\nabla (a+(v_x)_r^*)|^2=\fint_{B(x,s)}|\nabla (v_x)_r^*|^2\nonumber\\\
&+\fint_{B(x,s)}|\nabla a|^2+2\fint_{B(x,s)}\langle \nabla a,\nabla (v_x)_r^*\rangle\nonumber\\\
&=\fint_{B(x,s)}|\nabla (v_x)_r^*|^2+|\nabla (u_x)_r^*(x)|^2+2\langle \nabla (u_x)_r^*(x),\fint_{B(x,s)}\nabla (v_x)_r^*\rangle.\label{5.6}
\end{align}
Since $(v_x)_r^*$ is harmonic in $B(x,r)$, so is $\nabla(v_x)_r^*$, thus $\displaystyle{\fint_{B(x,s)}\nabla (v_x)_r^*=\nabla (v_x)_r^*(x)=0}$. So \eqref{5.6} yields
\begin{equation}\label{5.7}
\fint_{B(x,s)}|\nabla (u_x)_r^*|^2=|\nabla (u_x)_r^*(x)|^2+\fint_{B(x,s)}|\nabla (v_x)_r^*|^2.
\end{equation}
The same proof, with $(x,s)$ replaced with $B(x,r)$, shows that
\begin{equation}\label{5.8}
\fint_{B(x,r)}|\nabla (u_x)_r^*|^2=|\nabla (u_x)_r^*(x)|^2+\fint_{B(x,r)}|\nabla (v_x)_r^*|^2.
\end{equation}

We return to $\displaystyle{\fint_{B(x,s)}|\nabla (u_x)_r^*|^2}$. By \eqref{5.7}, because $\displaystyle{\fint_{B(x,s)}\nabla (v_x)_r^*=\nabla (v_x)_r^*(x)=0}$, by Poincar\'{e}'s inequality and because $\nabla^2 a=0$,
\begin{align}
\fint_{B(x,s)}|\nabla (u_x)_r^*|^2&=|\nabla (u_x)_r^*(x)|^2+\fint_{B(x,s)}|\nabla (v_x)_r^*|^2\nonumber\\
&=|\nabla (u_x)_r^*(x)|^2+\fint_{B(x,s)}\Big|\nabla (v_x)_r^*-\fint_{B(x,s)}\nabla (v_x)_r^*\Big|^2\nonumber\\
&\le |\nabla (u_x)_r^*(x)|^2+Cs^2\fint_{B(x,s)}|\nabla^2 (v_x)_r^*|^2\nonumber\\
&\le |\nabla (u_x)_r^*(x)|^2+Cs^2\fint_{B(x,s)}|\nabla^2 (u_x)_r^*|^2.\label{5.9}
\end{align}
Now suppose that $s<r/2$. By basic properties of harmonic functions,

\begin{align}
\fint_{B(x,s)}|\nabla^2(u_x)_r^*|^2&\le\sup_{B(x,s)}|\nabla^2 (u_x)_r^*|^2\le C\left(r^{-2}\fint_{\partial B(x,r)}|(u_x)_r^*|\right)^2\nonumber\\
&=C\left(r^{-2}\fint_{\partial B(x,r)}u_x\right)^2=Cr^{-4}b(u_x,x,r)^2.\label{5.10}
\end{align}
Now \eqref{5.9} and \eqref{5.10} yield
\begin{equation}\label{5.11}
\fint_{B(x,s)}|\nabla (u_x)_r^*|^2\le  |\nabla (u_x)_r^*(x)|^2+Cr^{-4}s^2b(u_x,x,r)^2.
\end{equation}
By \eqref{5.5} and \eqref{5.11}, since $b(u_x,x,r)\ge 0$ (and because $\sqrt{a^2+b^2}\le a+b$ for $a,b\ge 0$), we have
\begin{equation}
\omega(u_x,x,s)\
\le C\left(\frac{r}{s}\right)^{n/2}r^{\alpha/2}\omega(u_x,x,r)+C\left(\frac{r}{s}\right)^{n/2}+|\nabla (u_x)_r^*(x)|+Cr^{-2}sb(u_x,x,r).\label{5.12}
\end{equation}
Let $\theta\in (0,1/2)$, as in the statement. Take $s=\theta r<r/2$. With this notation, \eqref{5.12} yields, using \eqref{5.2} and \eqref{5.1}:
\begin{align}
\omega(u_x,x,\theta r)&\le |\nabla (u_x)_r^*(x)| +C\theta^{-n/2}r^{\alpha/2}\omega(u_x,x,r)+C\theta^{-n/2}+C\theta r^{-1}b(u_x,x,r)\nonumber\\
&\le |\nabla (u_x)_r^*(x)|+C\theta^{-n/2}(r^{\alpha/2}+K_1^{-1})\omega(u_x,x,r)+C\theta\gamma(1+\omega(u_x,x,r))\nonumber\\
&\le |\nabla (u_x)_r^*(x)|+C\left(\theta^{-n/2}(r^{\alpha/2}+K_1^{-1})+\theta\gamma(K_1^{-1}+1)\right)\omega(u_x,x,r).\label{5.13}
\end{align}
We shall now control $|\nabla (u_x)_r^*(x)|$ in terms of $\omega(u_x,x,r)$. We consider two cases. Let $\eta>0$ be small, to be chosen later. If
\begin{equation}\label{5.14}
\fint_{B(x,r)}|\nabla (v_x)_r^*|^2\ge \eta^2\fint_{B(x,r)}|\nabla u_x|^2=\eta^2\omega(u_x,x,r)^2
\end{equation}
then we use \eqref{5.8} and obtain
\begin{align}
\omega(u_x,x,r)^2&=\fint_{B(x,r)}|\nabla u_x|^2\ge \fint_{B(x,r)}|\nabla (u_x)_r^*|^2=|\nabla (u_x)_r^*(x)|^2+\fint_{B(x,r)}|\nabla (v_x)_r^*|^2\nonumber\\
&\ge |\nabla (u_x)_r^*(x)|^2+\eta^2\omega(u_x,x,r)^2.\label{5.15}
\end{align}
By \eqref{5.13}, \eqref{5.15} yields
\begin{align}
\omega(u_x,x,\theta r)&\le |\nabla (u_x)_r^*(x)|+C\left(\theta^{-n/2}(r^{\alpha/2}+K_1^{-1})+\theta\gamma(K_1^{-1}+1)\right)\omega(u_x,x,r)\nonumber\\
&\le \sqrt{1-\eta^2}\omega(u_x,x,r)+C\left(\theta^{-n/2}(r^{\alpha/2}+K_1^{-1})+\theta\gamma(K_1^{-1}+1)\right)\omega(u_x,x,r).\label{5.16}
\end{align}
Before we continue the analysis of this case, let us deal with the case when \eqref{5.14} fails. In this case, by \eqref{5.8},
\begin{equation}\label{5.17}
\fint_{B(x,r)}|\nabla (u_x)_r^*|^2=|\nabla (u_x)_r^*(x)|^2+\fint_{B(x,r)}|\nabla (v_x)_r^*|^2\le |\nabla (u_x)_r^*(x)|^2+\eta^2\omega(u_x,x,r)^2.
\end{equation}

By standard estimates on harmonic functions,
\begin{equation}\label{5.18}
|\nabla (u_x)_r^*(x)|\le Cr^{-1}\fint_{\partial B(x,r)}|(u_x)_r^*|=Cr^{-1}\fint_{\partial B(x,r)}|u_x|=Cr^{-1}\fint_{\partial B(x,r)}u_x=Cr^{-1}b(u_x,x,r).
\end{equation}
Then, returning to \eqref{5.17},
\begin{align}
\fint_{B(x,r)}|\nabla (u_x)_r^*|^2&\le |\nabla (u_x)_r^*(x)|^2+\eta^2\omega(u_x,x,r)^2\le Cr^{-2}b(u_x,x,r)^2+\eta^2\omega(u_x,x,r)^2\nonumber\\
&\le C\gamma^2(1+\omega(u_x,x,r))^2+\eta^2\omega(u_x,x,r)^2.\label{5.19}
\end{align}
At the same time, \eqref{5.5} with $s=r$, \eqref{5.19} and \eqref{5.2} yield, recalling that $r<r_1$,
\begin{align}
\omega(u_x,x,r)&\le Cr^{\alpha/2}\omega(u_x,x,r)+C+\left(\fint_{B(x,r)}|\nabla (u_x)_r^*|^2\right)^{1/2}\nonumber\\
&\le Cr^{\alpha/2}\omega(u_x,x,r)+C+C\gamma(1+\omega(u_x,x,r))+\eta\omega(u_x,x,r)\nonumber\\
&\le C(r_1^{\alpha/2}+K_1^{-1}+\gamma K_1^{-1}+\gamma+\eta)\omega(u_x,x,r).\label{5.20}
\end{align}
If $\eta$ is small enough so that $C\eta<1/4$ and $K_1$ is large enough and $r_1$ is small enough so that
\begin{equation}\label{5.21}
C(r_1^{\alpha/2}+K_1^{-1}+\gamma K_1^{-1}+\gamma)<\frac{1}{4},
\end{equation}
 we get a contradiction since $\omega(u_x,x,r)\ge K_1>0$. Under these conditions the second case is impossible and \eqref{5.16} holds. 
 
 To deduce \eqref{5.3}, choose $K_1, r_1$ and $\gamma$ satisfying \eqref{5.21} and
 \begin{equation}\label{5.22}
 C\left(\theta^{-n/2}(r_1^{\alpha/2}+K_1^{-1})+\theta\gamma(K_1^{-1}+1)\right)\le\frac{1-\sqrt{1-\eta^2}}{2},
 \end{equation}
 where $\eta$ is as above. Let $\beta\in\left(\frac{1+\sqrt{1-\eta^2}}{2},1\right)$. We have
 \begin{equation}\label{5.23}
 \sqrt{1-\eta^2}+C\left(\theta^{-n/2}(r_1^{\alpha/2}+K_1^{-1})+\theta\gamma(K_1^{-1}+1)\right)\le \beta,
 \end{equation}
 which ensures that \eqref{5.3} holds.
\end{proof}

\begin{theorem}\label{T:5.1}
Let $u$ be an almost minimizer for $J^+$ in $\Omega$. Then $u$ is locally Lipschitz in $\Omega$. 
\end{theorem}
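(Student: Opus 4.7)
The plan is to combine the three technical results of Section~\ref{S:technical} with Lemma~\ref{L:5.1}: the geometric decay of $\omega$ when $b$ is small (Lemma~\ref{L:5.1}), the construction of good scales when $b$ is large (Lemma~\ref{L:4.4}), and the local Lipschitz control at good scales (Lemma~\ref{L:4.3}). The goal is to show that $|\nabla u|\in L^\infty_{\loc}(\Omega)$; since $u$ is continuous by Theorem~\ref{T:continuity}, this yields local Lipschitz regularity.

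First I would fix constants in a carefully ordered fashion. Choose $\theta\in(0,1/2)$ and invoke Lemma~\ref{L:5.1} to produce $\gamma,K_1,\beta,r_1$; then choose $\tau\in(0,\tau_2)$ small, and invoke Lemma~\ref{L:4.4} with this $\tau$, the $\gamma$ just produced, and $C_0=1$ to obtain $K,\eta,r_0$. Set $K^\ast=\max(K,K_1)$, fix a compact $\wt\Omega\subset\Omega$, and pick $R_0\le\min(r_0,r_1)$ small enough that $B(x,K_2 R_0)\subset\Omega$ for all $x\in\wt\Omega$. Put $R_k=\theta^k R_0$ and $\omega_k=\omega(u_x,x,R_k)$; the initial $\omega_0$ is uniformly bounded on $\wt\Omega$ by a constant $M_0=M_0(\wt\Omega,u)$, via the change of variables from $u$ to $u_x$ and the $L^2$ norm of $\nabla u$ on a slightly larger neighborhood.

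For a Lebesgue point $x\in\wt\Omega$ of $|\nabla u_x|^2$, I would iterate a trichotomy at each dyadic scale $R_k$. In \textbf{Case (a)} ($\omega_k\ge K_1$ and $b(u_x,x,R_k)\le\gamma R_k(1+\omega_k)$), Lemma~\ref{L:5.1} gives $\omega_{k+1}\le\beta\omega_k$. In \textbf{Case (b)} ($\omega_k\ge K$ and $b(u_x,x,R_k)>\gamma R_k(1+\omega_k)$), Lemma~\ref{L:4.4} produces $\rho\in(\eta R_k/2,\eta R_k)$ with $(x,\rho)\in\mathcal{G}(\tau,1,3,r_0)$, and Lemma~\ref{L:4.3} then yields $\omega(u,x,s)\le C(\tau^{-n/2}\omega(u_x,x,\rho)+\rho^{\alpha/2})$ for $s\le k_1\tau\rho$; combined with the one-step log estimate \eqref{4.71} controlling $\omega(u_x,x,\rho)$ by $C(\omega_k+|\log\eta|)$, letting $s\to 0$ bounds $|\nabla u|(x)$ in terms of $\omega_k$ alone. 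In \textbf{Case (c)} ($\omega_k<K^\ast$), there is nothing to prove at that scale and $\omega_k$ is already bounded.

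An induction then shows $\omega_k\le L:=\max(M_0,CK^\ast+C)$ for all $k$: in Case~(a), $\omega_{k+1}\le\beta L\le L$; in Case~(c), the one-step estimate \eqref{iterationw} gives $\omega_{k+1}\le C\omega_k+C\le CK^\ast+C\le L$; Case~(b) terminates the iteration with a direct pointwise bound on $|\nabla u|(x)\le C(L+|\log\eta|)$. If Case~(b) never triggers, $\omega_k\le L$ for all $k$, and \eqref{logboundw} interpolates to yield $\omega(u_x,x,s)\le CL$ for all $s\in(0,R_0]$; letting $s\to 0$ at the Lebesgue point produces $|\nabla u|(x)\le CL$. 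Either way, $|\nabla u|\in L^\infty(\wt\Omega)$, and the continuity of $u$ upgrades this to local Lipschitz regularity. The main obstacle is the consistent joint choice of parameters---the ordering above is designed so each constant only depends on the previously fixed ones---and verifying that the one-step log estimate in Case~(c) does not overwhelm the $\beta$-decay in Case~(a); a secondary subtlety is that Lemma~\ref{L:4.3} must apply with the value of $C_0$ produced by Lemma~\ref{L:4.4}, which is why we fix $C_0=1$ throughout.
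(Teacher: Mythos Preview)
Your proposal is correct and follows essentially the same approach as the paper: a trichotomy at each scale $R_k=\theta^k R_0$ (small $b$ gives geometric decay via Lemma~\ref{L:5.1}, large $b$ gives a terminating Lipschitz bound via Lemmas~\ref{L:4.4} and~\ref{L:4.3}, small $\omega$ is already bounded), iterated until termination or until a Lebesgue point argument applies. The paper organizes the induction slightly differently---it tracks the decaying bound $\omega_k\le\max(\beta^k\omega(u_x,x,r),3^{n/2}K_3)$ via a stopping index $k_{\mathrm{stop}}$, and uses $C_0=10$ rather than $C_0=1$ in Lemma~\ref{L:4.4}---but these are cosmetic; the substance is identical.
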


We want to show that there exist $r_2>0$ and $C_2\ge 1$ (depending on $n,\kappa,\alpha,\lambda,\Lambda)$ such that for each choice of $x_0\in \Omega$ and $r_0>0$ such that $r_0\le r_2$ and $B(x_0,K_2r_0)\subset \Omega$, where $K_2$ is as in Lemma \ref{L:4.4}, 
 then
\begin{equation}\label{5.24} 
|u(x)-u(y)|\le C_2(\omega(u_{x_0},x_0,2r_0)+1)|x-y| \ \text{ for } x,y\in B(x_0,r_0).
\end{equation}

\begin{proof} Let $(x,r)$ be such that $B(x,K_2r)\subset \Omega$. We want to use the different Lemmas above to find a pair $(x,\rho)$ that allows us to control $u$. Pick $\theta=1/3$ (smaller values would work as well), and let $\beta, \gamma, K_1, r_1$ be as in Lemma \ref{L:5.1}. 

Pick $\tau=\tau_2/2$, where $\tau_2\in(0,\tau_1)$ where $\tau_1$ is the constant that we get in Lemma \ref{Gselfimprovement} applied with $C_1=3$ and $r_0=r_1$.
Here $\tau_2$ us the corresponding constant that appears in Lemma \ref{L:4.3}.
Let now $r_0,\eta, K$ be as in Lemma \ref{L:4.4} applied to $C_0=10$, and to $\tau$ and $\gamma$ as above. From Lemma \ref{L:4.4} we get a small $r_\gamma$. Set
\begin{equation}\label{5.25} 
K_3\ge \max(K_1,K), \ \ \text{ and } \ \ r_2\le\min(r_1,r_{\gamma}).
\end{equation}
Let $r\le r_2$. We consider three cases.

\textbf{Case 1:}
\begin{equation}\label{5.26}
\begin{cases}
\omega(u_x,x,r)\ge K_3\\
b(u_x,x,r)\ge \gamma r(1+\omega(u_x,x,r))
\end{cases}
\end{equation}

\textbf{Case 2:}
\begin{equation}\label{5.27}
\begin{cases}
\omega(u_x,x,r)\ge K_3\\
b(u_x,x,r)<\gamma r(1+\omega(u_x,x,r))
\end{cases}
\end{equation}

\textbf{Case 3:}
\begin{equation}\label{5.28}
\omega(u_x,x,r)<K_3.
\end{equation}

Let us start with case 1. By \eqref{5.26}, we can apply Lemma \ref{L:4.4} to find $\rho\in\left(\frac{\eta r}{2},\eta r\right)$ such that  $(x,\rho)\in\mathcal{G}(\tau,10,3,r_{\gamma})$. 

Notice that $\tau_1$ obtained in Lemma \ref{Gselfimprovement} depends on an upper bound on $r_0$ (which we had taken to be $r_1$), so if we keep $C_1$ but have a smaller $r_0$, the same $\tau_1$ works. Notice that $\rho<\eta r<r<r_{\gamma}$.
The pair $(x,\rho)$ satisfies the assumptions of Lemmas \ref{Gselfimprovement}-\ref{L:4.3} (applied with $r_0=r_1$), that is, $(x,\rho)\in \mathcal{G}(\tau,10,3,r_{\gamma})$, where $\tau<\tau_2$. By Lemma \ref{L:4.3}, $u$ is $C_x$-Lipschitz in $B(x,\tau r/10)$.


By \eqref{4.37}, we can take
\begin{equation}\label{5.29}
C_x=C(\tau^{-\frac{n}{2}}\omega(u_x,x,\rho)+\rho^{\frac{\alpha}{2}})\le C(\tau^{-\frac{n}{2}}\eta^{-\frac{n}{2}}\omega(u_x,x,r)+r^{\frac{\alpha}{2}}).
\end{equation}
By Lemma \ref{L:4.3} we even know that $u$ is $C^{1,\beta}$ in a neighborhood of $x$, thus Case 1 yields additional regularity.

In the two remaining cases, we set 
\[
r_k=\theta^k r=3^{-k}r, \ k\ge 0.
\]
Our task is to control $\omega(u_x,x,r_k)$. If the pair $(x,r_k)$ ever satisfies \eqref{5.26}, we denote $k_{\text{stop}}$ the smallest integer such that $(x,r_k)$ satisfies \eqref{5.26} (notice that $k\ge 1$ since we are not in Case 1). Otherwise, set $k_{\text{stop}}=\infty$.

Let $k<k_{\text{stop}}$ be given. If $(x,r_k)$ satisfies \eqref{5.27}, we can apply Lemma \ref{5.1} to it. Therefore 
\begin{equation}\label{5.30}
\omega(u_x,x,r_{k+1})\le\beta \omega(u_x,x,r_k).
\end{equation}

Otherwise, $(x,r_k)$ satisfies \eqref{5.28} (since $k<k_{\text{stop}}$). Then
\begin{equation}\label{5.31}
\omega(u_x,x,r_{k+1})=\left(\fint_{B(x,r_{k+1})}|\nabla u_x|^2\right)^{1/2}\le 3^{\frac{n}{2}}\omega(u_x,x,r_k)\le 3^{\frac{n}{2}}K_3.
\end{equation}

By \eqref{5.30} and \eqref{5.31}, we obtain that for $0\le k\le k_{\text{stop}}$,
\begin{equation}\label{5.32}
\omega(u_x,x,r_k)\le \max\left(\beta^k\omega(u_x,x,r),3^{\frac{n}{2}}K_3\right).
\end{equation}

If $k_{\text{stop}}=\infty$, this implies that
\begin{equation}\label{5.33}
\limsup_{k\rightarrow\infty} \omega(u_x,x,r_k)\le 3^{\frac{n}{2}}K_3.
\end{equation}
In particular, if $x$ is a Lebesgue point of $\nabla u_x$ (hence a Lebesgue point for $\nabla u$),
\begin{equation}\label{5.34}
|\nabla u_x(x)|\le 3^{n/2}K_3.
\end{equation}
This implies
\begin{equation}\label{5.34'}
|\nabla u(x)|\le C3^{n/2}K_3.
\end{equation}

If $k_{\text{stop}}<\infty$, we apply our argument from Case 1 to the pair $(x,r_{k_{\text{stop}}})$ and get that $u$ is $C^{1,\beta}$ in a neighborhood of $x$. By \eqref{5.29} and \eqref{5.32},
\begin{align}
|\nabla u(x)|&\le C(\tau^{-\frac{n}{2}}\eta^{-\frac{n}{2}}\omega(u_x,x,r_{k_{\text{stop}}})+r_{k_{\text{stop}}}^{\frac{\alpha}{2}})\nonumber\\
&\le C\tau^{-\frac{n}{2}}\eta^{-\frac{n}{2}}\max\left(\beta^{k_{\text{stop}}}\omega(u_x,x,r),3^{\frac{n}{2}}K_3\right)+Cr^{\frac{\alpha}{2}}\nonumber\\
&\le C'\omega(u_x,x,r)+C',\label{5.35}
\end{align}
where $C'$ depends on $n,\kappa, \alpha,\lambda,\Lambda$.
Notice that we still have \eqref{5.35} in Case 1 (directly by \eqref{5.29}), and since \eqref{5.34'} is better than \eqref{5.35}, we proved that if $r\le r_2$, \eqref{5.35} holds for almost every $x\in\Omega$ with $B(x,K_2r)\subset \Omega$.

Now let $x_0\in\Omega$ and $r_0<r_2$ be such that $B(x_0,K_2r_0)\subset \Omega$. Then for almost every $x\in B(x_0,r_0)$, \eqref{5.35} holds with $r=r_0/2$ (so that $B(x,K_2r)\subset B(x_0,K_2r_0)$) and so
\begin{equation}\label{5.36}
|\nabla u(x)|\le C'\omega(u_x,x,r)+C'\le 2^{n/2}C'\omega(u_x,x_0,2r_0)+C'.
\end{equation}
Since we already know that $u$ is in the Sobolev space $W^{1,2}_{\text{loc}}(B(x_0,r_0))$, we deduce from \eqref{5.36} that $u$ is Lipschitz in $B(x_0,r_0)$ and \eqref{5.24} holds, proving Theorem \ref{T:5.1}.
\end{proof}

\section{Almost Monotonicity}\label{S:am}

In this section we establish an analogue of the Alt-Caffarelli-Friedman \cite{ACF} monotonicity formula for variable coefficient almost-minimizers. Recall, for the remainder of this section, the notation $f^{\pm} = \max\{\pm f, 0\}$. In \cite{ACF} it was shown
that the quantity 
\begin{equation}\label{e:defofACF}\begin{aligned} \Phi(f, y, r) &\equiv \frac{1}{r^4}\left(\int_{B(y,r)} \frac{|\nabla f^+|^2}{|z-y|^{n-2}}dz \right)\left(\int_{B(y,r)} \frac{|\nabla f^-|^2}{|z-y|^{n-2}}dz \right)\\
&\equiv \frac{1}{r^4}\Phi_+(f,y,r)\Phi_-(f,y,r)\end{aligned}.\end{equation}
is monotone increasing in $r$ as long as $f(y) = 0$ and $f$ is harmonic. While we cannot expect to get the same monotonicity, we will prove an almost-mononicity result in the style of \cite{davidtoroalmostminimizers}.

\begin{lemma}\label{varphicomputation}
Let $u$ be an almost minimizer for $J$ in $\Omega$, and assume that $B(x,2r) \subset \Omega$, where $x$ is such that $A(x) = I$. Let $\varphi \in W^{1,2}(\Omega)\cap C(\Omega)$ be such that $\varphi(y) \geq 0$ everywhere, $\varphi(y) = 0$ on $\Omega \backslash B(x,r)$, and let $\lambda \in \R$ be such that \begin{equation}\label{lambdacondition} |\lambda \varphi(y)| < 1,\: \mathrm{on}\; \Omega.\end{equation} Then, for each choice of sign, $\pm$, \begin{equation}\label{eqn:varphicomputation} \begin{aligned} 0 \leq Cr^\alpha J_{x,r}(u)+Cr^{\alpha + n} &+ 2\lambda \left[\int_{B(x,r)} \varphi |\nabla u^{\pm}|^2 + \int_{B(x,r)} u^\pm \left\langle \nabla u^{\pm}, \nabla \varphi\right\rangle \right] \\
&+\lambda^2\left[\int_{B(x,r)} \varphi^2|\nabla u^\pm|^2 + (u^\pm)^2|\nabla \varphi|^2 + 2\varphi u^{\pm} \left\langle \nabla u^{\pm}, \nabla \varphi\right\rangle \right], \end{aligned}\end{equation} where $C < \infty$ is a constant which depends only on $\kappa, n, \Lambda, \lambda$ and the $C^{0,\alpha}$ norm of $A$. 
\end{lemma}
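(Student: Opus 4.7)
The plan is to prove both signs of \eqref{eqn:varphicomputation} by testing the almost-minimization inequality against the ``multiplicatively perturbed'' competitor
\[
v = u + \lambda \varphi u^{\pm} = u^\pm(1+\lambda\varphi) - u^{\mp}.
\]
The hypothesis $|\lambda \varphi|<1$ makes $1+\lambda\varphi$ strictly positive, so $v$ preserves the sign structure of $u$: $\{v>0\} = \{u>0\}$ and $\{v<0\} = \{u<0\}$. Since $\varphi \equiv 0$ on $\Omega\setminus B(x,r)$, we have $v = u$ outside $B(x,r)$, so $v$ is admissible in Definition~\ref{d1}. The crucial point is that the sign sets agreeing makes the $q_{\pm}^2$ indicator contributions to $J_{x,r}(v)-J_{x,r}(u)$ cancel identically, reducing almost-minimality to a statement about Dirichlet-type terms:
\[
0 \le \int_{B(x,r)} \bigl[\langle A\nabla v,\nabla v\rangle - \langle A\nabla u,\nabla u\rangle\bigr] + \kappa r^{n+\alpha}.
\]

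Next, set $w := \nabla v - \nabla u = \lambda \nabla(\varphi u^+)$ (for the $+$ case), and expand
\[
\langle A\nabla v,\nabla v\rangle - \langle A\nabla u,\nabla u\rangle = 2\langle A\nabla u, w\rangle + \langle Aw, w\rangle.
\]
Using $A(x) = I$ together with the $C^{0,\alpha}$ estimate $|A(y)-I| \le Cr^\alpha$ on $B(x,r)$ (as in \eqref{1.25g}), I would replace each occurrence of $A$ by $I$ at the cost of controlled error terms $\le Cr^\alpha |\nabla u|\,|w|$ and $\le Cr^\alpha|w|^2$. The main algebraic identity is then delivered by the orthogonality $\nabla u^+ \cdot \nabla u^- = 0$ and the fact that $u^+ \equiv 0$ on $\{\nabla u^- \neq 0\}$, which together give
\[
\langle \nabla u, \nabla(\varphi u^+)\rangle = \varphi|\nabla u^+|^2 + u^+ \langle \nabla u^+, \nabla\varphi\rangle,
\]
\[
|\nabla(\varphi u^+)|^2 = \varphi^2|\nabla u^+|^2 + (u^+)^2|\nabla \varphi|^2 + 2\varphi u^+ \langle \nabla u^+, \nabla \varphi\rangle.
\]
Multiplying by $2\lambda$ and $\lambda^2$ respectively reproduces exactly the two bracketed quantities in the conclusion. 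The case of the $-$ sign is symmetric, using $v = u - \lambda \varphi u^-$.

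The main technical obstacle will be the bookkeeping of the $A - I$ errors. The cross-term error $2Cr^\alpha \int |\nabla u|\,|w|$ splits via Young's inequality into one part absorbed by $Cr^\alpha \int|\nabla u|^2 \le Cr^\alpha \lambda^{-1} J_{x,r}(u)$, and a second part of order $r^\alpha |w|^2 = Cr^\alpha \lambda^2 |\nabla(\varphi u^+)|^2$; together with the quadratic error $\int \langle (A-I)w, w\rangle$, this is absorbed into the $\lambda^2$ bracket (with a negligible enlargement of the constant there) because that bracket is nonnegative. The residual $\kappa r^{n+\alpha}$ is precisely the $Cr^{\alpha + n}$ allowance in the conclusion. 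Once the error accounting is in place, the inequality follows directly.
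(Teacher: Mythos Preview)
Your approach is essentially the paper's: the same competitor $v=u+\lambda\varphi u^{\pm}$, the same sign-preservation argument to kill the $q_\pm$ contributions, and the same use of $|A-I|\le Cr^\alpha$ on $B(x,r)$ to pass from $\langle A\,\cdot,\cdot\rangle$ to the Euclidean inner product. Your algebraic identities for $\langle\nabla u,\nabla(\varphi u^+)\rangle$ and $|\nabla(\varphi u^+)|^2$ are exactly the ones the paper uses.

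There is one small bookkeeping gap. After Young's inequality your error splits into $Cr^\alpha\int|\nabla u|^2$ (fine, this is $\le Cr^\alpha J_{x,r}(u)$) and $Cr^\alpha\int|w|^2=Cr^\alpha\cdot(\lambda^2\text{ bracket})$. You propose to ``absorb this into the $\lambda^2$ bracket with a negligible enlargement of the constant there,'' but the statement has coefficient \emph{exactly} $1$ on the $\lambda^2$ bracket, and since that bracket is nonnegative, multiplying it by $(1+Cr^\alpha)$ gives a strictly weaker inequality than the one claimed. The paper avoids this by organizing the $A\to I$ replacement differently: it bounds $\langle A\nabla v^+,\nabla v^+\rangle\le(1+Cr^\alpha)|\nabla v^+|^2$ and $\langle A\nabla u^+,\nabla u^+\rangle\ge(1-Cr^\alpha)|\nabla u^+|^2$ \emph{before} subtracting, obtaining
\[
0\le (1+Cr^\alpha)\Big[\int|\nabla v^+|^2-|\nabla u^+|^2\Big]+2Cr^\alpha\int|\nabla u^+|^2+\kappa r^{n+\alpha},
\]
and then divides through by $(1+Cr^\alpha)$. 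This places the \emph{same} factor on both the $2\lambda$ and $\lambda^2$ pieces, so after division both carry coefficient $1$. Adopting that order of operations fixes your argument with no other change; alternatively, your version with $(1+Cr^\alpha)$ on the quadratic bracket is entirely adequate for the subsequent application in Lemma~\ref{twosidedaplusbound}, where that term is estimated from above anyway.
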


\begin{proof}
We verify the proof for $u^+$, the arguments for $u^-$ are similar. Define $v$ on $\Omega$ by \begin{equation}\label{vdefinition} v(y) = u(y) + \lambda \varphi(y)u(y) = (1+\lambda \varphi(y))u^+(y),\; \forall y \in B(x,r)\cap \{u > 0\}\end{equation} and $v(x) \equiv u(x)$ otherwise. It is then easy to verify that $v$ is continuous, that $u$ and $v$ have the same sign on $\Omega$ and that $v^+ = (1+\lambda \varphi)u^+$ everywhere on $\Omega$. We also know that $v^\pm \in W^{1,2}(\Omega)$ with \begin{equation}\label{formulaforvplusgrad} \nabla v^+ = (1+\lambda \varphi)\nabla u^+ + \lambda u^+ \nabla \varphi.\end{equation} For a detailed verification of these facts, see the proof of Lemma 6.1 in \cite{davidtoroalmostminimizers}. 

Because $u$ and $v$ have the same sign and as $\nabla u^- = \nabla v^-$ we can compute that $$J_{x,r}(v) = J_{x,r}(u) + \int_{B(x,r)} \left\langle A\nabla v^+, \nabla v^+\right\rangle - \left\langle A\nabla u^+, \nabla u^+\right\rangle.$$ Also, $u = v$ on $\partial B(x,r)$ so we can use the almost-minimizing properties of $u$ to conclude that \begin{equation}\label{almostminimizingalmostmono} J_{x,r}(u) \leq J_{x,r}(v) + \kappa r^{\alpha + n}.\end{equation} Combining the two above equations we can conclude that \begin{equation}\label{differenceofjuandjv} 0 \leq \kappa r^{\alpha+n}  + \left[\int_{B(x,r)} \left\langle A\nabla v^+, \nabla v^+\right\rangle - \left\langle A\nabla u^+, \nabla u^+\right\rangle \right]\end{equation} Note that $A(x) = I$ and $A$ is H\"older continuous. Thus, on $B(x,r)$, we have $$\left\langle A \nabla v^+, \nabla v^+\right\rangle \leq (1+Cr^\alpha)|\nabla v^+|^2$$ and $$(1-Cr^\alpha)|\nabla u^+|^2 \leq \left\langle A \nabla u^+, \nabla u^+\right\rangle.$$

Using these estimates in \eqref{differenceofjuandjv}, we have \begin{equation}\label{firstlineariezeddifferencebetween}0 \leq  \kappa r^{\alpha + n}+ (1+Cr^\alpha)\left[\int_{B(x,r)} |\nabla v^+|^2 - |\nabla u^+|^2\right] +2Cr^\alpha \int_{B(x,r)} |\nabla u^+|^2.\end{equation}

By the ellipticity of $A$, $r^\alpha \int_{B(x,r)} |\nabla u^+|^2 \leq Cr^{\alpha}J_{x,r}(u)$ and so we get \begin{equation}\label{secondlineariezeddifferencebetween} 0 \leq \kappa r^{\alpha+n} +C_1r^\alpha J_{x,r}(u) + (1+C_2r^\alpha) \left[\int_{B(x,r)} |\nabla v^+|^2 - |\nabla u^+|^2\right], \end{equation} where $C_1, C_2 < \infty$ here depend on $\kappa, \Lambda$ and the $C^{0,\alpha}$ norm of $A$. While the exact values of $C_1, C_2$ are unimportant, we give them subscripts to emphasize that we cannot necessarily take them to be the same constant. 

We note that \eqref{secondlineariezeddifferencebetween} above is very similar to equation (6.14) in \cite{davidtoroalmostminimizers}. We can then argue as in the rest of the proof of Lemma 6.1 there to complete our proof. For the sake of completeness, we include these arguments below. 

By \eqref{formulaforvplusgrad}, \begin{equation}\label{vplusgradexpanded}\begin{aligned} |\nabla v^+|^2 =& (1+\lambda)^2 |\nabla u^+|^2 + 2\lambda(1+\lambda \varphi)u\left\langle \nabla u^+, \nabla \varphi\right\rangle + \lambda^2 (u^+)^2|\nabla \varphi|^2\\ =& |\nabla u^+|^2 + 2\lambda \left[\varphi |\nabla u^+|^2 + u^+\left\langle \nabla u^+, \nabla \varphi\right\rangle \right] \\ &+ \lambda^2 \left[\varphi^2|\nabla u^+|^2 + 2\varphi u^+\left\langle \nabla u^+, \nabla \varphi\right\rangle + (u^+)^2|\nabla \varphi|^2\right]. \end{aligned}\end{equation} Integrate this, place it in \eqref{secondlineariezeddifferencebetween} and get that $$\begin{aligned} 0 \leq &\kappa r^{\alpha + n}+ C_1r^\alpha J_{x,r}(u) + 2\lambda (1+C_2r^\alpha)\left[\int_{B(x,r)}\varphi |\nabla u^+|^2 + u^+\left\langle \nabla u^+, \nabla \varphi\right\rangle \right] \\ &+ \lambda^2(1+C_2r^\alpha) \left[\int_{B(x,r)} \varphi^2|\nabla u^+|^2 + 2\varphi u^+\left\langle \nabla u^+, \nabla \varphi\right\rangle + (u^+)^2|\nabla \varphi|^2\right].\end{aligned}$$

Divide by $(1+C_2r^\alpha)$ and add $(Cr^\alpha- \frac{C_1r^\alpha}{1+C_2r^\alpha})J_{x,r}(u) \geq 0$ for $C$ large enough depending only on $\Lambda, \alpha$ and the $C^{0,\alpha}$ constant of $A$. This gives us the desired inequality \eqref{eqn:varphicomputation}. 
\end{proof}

We will now state and prove variable-coefficient analogues of Lemmas 6.2, 6.3 and 6.4 in \cite{davidtoroalmostminimizers}. We note that the proofs in \cite{davidtoroalmostminimizers} use Lemma 6.1 there, the continuity of almost-minimizers and the logarithmic growth of $\omega(x, r)$. In particular, the proofs go through virtually unchanged for almost-minimizers with variable coefficients. Thus, we will give brief indications of how to adapt the proofs of  \cite{davidtoroalmostminimizers} in our context and invite the reader to study Section 6 in \cite{davidtoroalmostminimizers} for more details.

\begin{lemma}\label{twosidedaplusbound}[Compare to Lemmas 6.2 and 6.3 in \cite{davidtoroalmostminimizers}]
Still assume that $n\geq 3$. Let $u$ be an almost minimizer for $J$ in $\Omega$ and assume that $B(x_0, 4r_0) \subset \Omega$ and that $u(x_0) = 0$ and $A(x_0) = I$. Then, for $0 < r < \min(1, r_0)$ and for each choice of sign, $\pm$, \begin{equation}\label{eqn:twosidedaplusbound}\begin{aligned}
&\left| \frac{c_n}{r^2} \Phi_{\pm}(u, x_0,r) - \frac{1}{n(n-2)}\fint_{B(x_0,r)} |\nabla u^\pm|^2 -\frac{1}{2} \fint_{\partial B(x_0,r)} \left( \frac{u^\pm}{r} \right)^2 \right| \\ &\leq Cr^{\frac{\alpha}{n+1}}\left(1+ \fint_{B(x_0, \tilde{C}r_0)} |\nabla u^2| + \log^2(r_0/r) + \log^2(1/r)\right). \end{aligned}
\end{equation}
Again, $c_n = (n(n-2)\omega_n)^{-1}$ and $C > 0$ depending only on $n, \Lambda, \lambda, \|A\|_{C^{0,\alpha}}$ and the almost-minimizing constants of $u$. 
\end{lemma}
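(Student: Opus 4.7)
The plan is to follow the strategy of Lemma 6.2 in \cite{davidtoroalmostminimizers}, now based on the variable coefficient substitute Lemma \ref{varphicomputation}. The motivating identity is that for a harmonic function $h$ with $h(x_0)=0$, testing $h^2$ against the Green-type function $\varphi^0_r(z) = |z-x_0|^{2-n}-r^{2-n}$ (which vanishes on $\partial B(x_0,r)$) and integrating by parts produces \emph{exactly}
\[
\frac{c_n}{r^2}\Phi_{\pm}(h,x_0,r) = \frac{1}{n(n-2)}\fint_{B(x_0,r)}|\nabla h^{\pm}|^2+\frac{1}{2}\fint_{\partial B(x_0,r)}(h^{\pm}/r)^2.
\]
For an almost minimizer the equality becomes approximate, and our task is to quantify the error on the right-hand side of \eqref{eqn:twosidedaplusbound}.

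For a small parameter $\varepsilon \in (0,r/2)$, I would use a nonnegative continuous test function $\varphi_\varepsilon \in W^{1,2}(\Omega)$, supported in $B(x_0,r)$, obtained from the positive part of $\varphi^0_r$ truncated at height $\varepsilon^{2-n}-r^{2-n}$ and smoothed near $\partial B(x_0,\varepsilon)$; then $\|\varphi_\varepsilon\|_{\infty}\le C\varepsilon^{2-n}$. Choosing $\lambda=\pm\lambda_\varepsilon$ with $\lambda_\varepsilon\simeq\varepsilon^{n-2}$ keeps $|\lambda\varphi_\varepsilon|<1$; applying Lemma \ref{varphicomputation} with both signs, dividing by $|\lambda_\varepsilon|$ and sending $\lambda_\varepsilon\to 0$ kills the $\lambda^2$ piece and gives the two-sided linearized inequality
\[
\Bigl|\int_{B(x_0,r)}\varphi_\varepsilon|\nabla u^{\pm}|^2 + \int_{B(x_0,r)} u^{\pm}\langle\nabla u^{\pm},\nabla\varphi_\varepsilon\rangle\Bigr| \le C r^\alpha J_{x_0,r}(u)+Cr^{n+\alpha}.
\]

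I would then rewrite the second integrand as $\tfrac12\langle\nabla(u^\pm)^2,\nabla\varphi_\varepsilon\rangle$ and integrate by parts. Since $\varphi^0_r$ is harmonic away from $x_0$, $\Delta\varphi_\varepsilon$ acts as a smoothed $\delta_{x_0}$ concentrated in $B(x_0,\varepsilon)$; the boundary piece on $\partial B(x_0,r)$, where $\partial_\nu\varphi^0_r=(2-n)r^{1-n}$, produces after normalization by $c_n/r^2$ exactly the surface-average term $\tfrac12\fint_{\partial B(x_0,r)}(u^\pm/r)^2$ in \eqref{eqn:twosidedaplusbound}, while the weighted bulk term yields $\tfrac{c_n}{r^2}\Phi_\pm(u,x_0,r)-\tfrac{1}{n(n-2)}\fint_{B(x_0,r)}|\nabla u^\pm|^2$. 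The residual errors to control are: (a) the almost-minimality error from Lemma \ref{varphicomputation}, bounded by combining $J_{x_0,r}(u)\le \Lambda\int_{B(x_0,r)}|\nabla u|^2 + Cr^n$ with the log-average transfer $\fint_{B(x_0,r)}|\nabla u|^2\le C\fint_{B(x_0,\tilde Cr_0)}|\nabla u|^2+C\log^2(r_0/r)$ coming from Lemma \ref{L:logdecay}; (b) the singularity-smoothing error $\varepsilon^{2-n}\int_{B(x_0,\varepsilon)}|\nabla u^{\pm}|^2$, similarly dominated via Lemma \ref{L:logdecay} by $C\varepsilon^2(1+\fint_{B(x_0,\tilde Cr_0)}|\nabla u|^2 + \log^2(r_0/\varepsilon))$; and (c) the $\int(u^\pm)^2\Delta\varphi_\varepsilon$ contribution on $B(x_0,\varepsilon)$, where the continuity estimate \eqref{continuity} combined with $u(x_0)=0$ gives $|u(z)|\le C\varepsilon(1+\log(r_0/\varepsilon))$, producing a bound of the same log-squared form. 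Optimizing $\varepsilon$ to balance the almost-minimality error against the smoothing error, exactly as in the analogous calculation of Lemma 6.2 in \cite{davidtoroalmostminimizers}, forces $\varepsilon\simeq r^{(n+\alpha)/(n+1)}$ and produces the exponent $\alpha/(n+1)$ in \eqref{eqn:twosidedaplusbound}. The main obstacle is the delicate bookkeeping of the singularity at $x_0$: since Lipschitz regularity across the free boundary is not yet available, one can only use the logarithmic modulus of continuity \eqref{continuity}, and this is precisely what forces the $\log^2(r_0/r)$ and $\log^2(1/r)$ factors on the right-hand side of \eqref{eqn:twosidedaplusbound}.
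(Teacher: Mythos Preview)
Your overall architecture (truncate the Green function, use Lemma \ref{varphicomputation}, integrate the cross term by parts to produce the surface piece and the weighted bulk, then feed in \eqref{need} and \eqref{continuity} and optimize the truncation) is exactly the paper's. But the step where you ``divide by $|\lambda_\varepsilon|$ and send $\lambda_\varepsilon\to 0$ to kill the $\lambda^2$ piece'' is a genuine gap. The inequality \eqref{eqn:varphicomputation} has the form
\[
0 \le E + 2\lambda L + \lambda^2 Q,\qquad E = Cr^\alpha J_{x_0,r}(u)+Cr^{n+\alpha},
\]
and $E$ does \emph{not} depend on $\lambda$ (this is the additive almost-minimality). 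Applying it with $\pm\lambda$ and dividing by $2\lambda$ gives $|L|\le \tfrac{E}{2\lambda}+\tfrac{\lambda}{2}Q$; as $\lambda\to 0$ the $\lambda Q$ term does vanish, but $E/\lambda\to\infty$, so you cannot conclude $|L|\le E$. The paper never attempts this: it fixes $\lambda = c_n^{-1}r^{n-2+\frac{n\alpha}{n+1}}$ and $s=r^{1+\frac{\alpha}{2(n+1)}}$, keeps the $\lambda^2 Q$ term, and shows that \emph{both} $E/(\lambda r^2)$ and $\lambda Q/r^2$ are bounded by the right-hand side of \eqref{eqn:twosidedaplusbound}. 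In other words, $\lambda$ and $s$ are chosen together so that the three error sources balance; there is no separate limit in $\lambda$.

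A related symptom: your claimed optimum $\varepsilon\simeq r^{(n+\alpha)/(n+1)}$ has exponent $<1$ for $\alpha<1$, hence $\varepsilon>r$ when $r<1$, which is inadmissible. Once you keep the quadratic term and balance correctly you are forced to a truncation scale of the form $r^{1+c\alpha}$ (the paper's $s=r^{1+\alpha/(2(n+1))}$), and the exponent $\alpha/(n+1)$ comes out of the resulting $E/(\lambda r^2)$ term rather than from your balance of (a) against (b).
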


\begin{proof}
We will prove this for $u^+$ and only prove the lower bound on the left hand side of \eqref{eqn:twosidedaplusbound}. The modifications required to prove the upper bound and the statement for $u^-$ are exactly as in \cite{davidtoroalmostminimizers} (see, in particular, Lemma 6.3 there) and we leave them to the interested reader. 

Fix $s < r$ and apply Lemma \ref{varphicomputation} with \begin{equation}\label{onesideddefofphi} \varphi(y) \equiv \varphi_{r,s}(y) \equiv \left\{ \begin{array}{cc} 0 & \:\mathrm{for}\: y \in \Omega \backslash B(x_0,r)\\
c_n\left(|y-x_0|^{2-n} - r^{2-n}\right) & \: \mathrm{for}\: y \in B(x_0,r) \backslash B(x_0,s)\\
c_n s^{2-n} - c_n r^{2-n} & \: \mathrm{for}\: y\in B(x_0,s),\\
\end{array} \right.
\end{equation}
where the constant $c_n$ is such that $\int_{\partial B(x_0, r)} \partial_{\hat{n}} \varphi_{r,s}(y) = 1$. Finally, let $\lambda = c_n^{-1}r^{n-2+\frac{n\alpha}{n+1}}$ and $s = r^{1+\frac{\alpha}{2(n+1)}}$. 

Inserting this choice for $\varphi, \lambda$ into \eqref{eqn:varphicomputation}, integrating by parts (moving the derivative onto the $\varphi$ term) and using Cauchy-Schwartz we get

\begin{equation}\label{eqn:varphirscomputation} \begin{aligned} 0 \leq Cr^\alpha J_{x_0,r}(u)+Cr^{\alpha + n} &+ 2\lambda \left[\int_{B(x_0,r)} \varphi |\nabla u^{\pm}|^2 - \frac{1}{2}\left(\fint_{\partial B(x_0,r)} (u^\pm)^2 - \fint_{\partial B(x_0,s)} (u^{\pm})^2\right)\ \right] \\
&+2\lambda^2\left[\int_{B(x_0,r)} \varphi^2|\nabla u^\pm|^2 +(u^{\pm})^2|\nabla \varphi|^2 \right]. \end{aligned}\end{equation}

Using the definition of $\varphi$ and the estimates $$\|\varphi\|_\infty \leq \frac{c_n}{s^{n-2}} \qquad \text{and} \qquad \|\nabla \varphi\|_\infty \leq \frac{c_n(n-2)}{s^{n-1}},$$ we can deduce that \begin{equation}\label{eqn:varphirscomputation2} \begin{aligned} 0 \leq Cr^\alpha J_{x_0,r}(u)+Cr^{\alpha + n} &+ 2\lambda \left[\Phi_+(u, x_0, r) - \frac{c_n}{r^{n-2}} \int_{B(x,r)}|\nabla u^+|^2 \right] \\
&-\lambda \left(\fint_{\partial B(x_0,r)} (u^\pm)^2 - \fint_{\partial B(x_0,s)} (u^{\pm})^2\right) \\
&+2\lambda^2\left[\frac{c_n^2}{s^{2n-4}}\int_{B(x_0,r)} |\nabla u^\pm|^2 +\frac{c_n^2(n-2)^2}{s^{2n-2}} \int_{B(x_0, r)\setminus B(x_0, s)}(u^{\pm})^2 \right]. \end{aligned}\end{equation}

We want to estimate $$\begin{aligned} M \equiv& \frac{c_n}{r^2} \Phi_+(u, x_0, r) - \frac{1}{n(n-2)}\fint_{B(x_0,r)} |\nabla u^+|^2 - \frac{1}{2}\fint_{\partial B(x_0, r)} \left(\frac{u^+}{r}\right)^2\\
\equiv&\frac{c_n}{r^2} \Phi_+(u, x_0, r) - c_nr^{-n} \int_{B(x_0, r)} |\nabla u^+|^2 - \frac{1}{2r^2}\fint_{\partial B(x_0, r)} (u^+)^2.\end{aligned}$$

Rearranging the terms of \eqref{eqn:varphirscomputation2} and dividing by $2\lambda r^2$ we get that \begin{equation}\label{e:upperM}\begin{aligned} -M \leq& \frac{Cr^\alpha\left(J_{x_0, r}(u) + r^n\right)}{\lambda r^2} + \frac{1}{2r^2} \fint_{B(x_0, s)} (u^+)^2\\ +& \lambda r^{-2} \left[\frac{c_n^2}{s^{2n-4}}\int_{B(x_0,r)} |\nabla u^\pm|^2 +\frac{c_n^2(n-2)^2}{s^{2n-2}} \int_{B(x_0, r)\setminus B(x_0, s)}(u^{\pm})^2 \right]. \end{aligned}\end{equation}
By the continuity of $u$, more specifically the last estimate in the proof of Theorem \ref{T:continuity}, and $u(x_0) = 0$ we can estimate \begin{equation}\label{e:boundaryest}\begin{aligned}
\fint_{B(x_0, s)} (u^+)^2 \leq&Cs^2\left(\omega(u, x_0, 2r_0) + +\log\left(\frac{r_0}{s}\right)\right)^2,\\
\int_{B(x_0, r)\setminus B(x_0, s)}(u^{\pm})^2 \leq&Cr^{n+2}\left(\omega(u, x_0, 2r_0) +\log\left(\frac{r_0}{s}\right)\right)^2.\end{aligned}
\end{equation}

Apply the estimates \eqref{e:boundaryest} to the corresponding terms in \eqref{e:upperM} and use the logarithmic growth of the Dirichlet energy, \eqref{need}, to bound both the energy term in $J_{x_0, r}$ and the term $$\int_{B(x_0, r)} |\nabla u^+|^2 \leq \int_{B(x_0, r)} |\nabla u|^2 \leq Cr^n\left(\omega(u, x_0, r_0) + \log(r_0/r)\right)^2.$$ Note that to bound the energy term in $J_{x_0, r}$ we need to use the ellipticity of $A$. Finally, overestimate the area terms in $J$ by $|B(x, r)| \simeq r^n$. After some arithmetic, and plugging in the values for $\lambda, s$ we arrive at the desired result.  See \cite{davidtoroalmostminimizers} for the detailed computations. 

\end{proof}

The next two results follow from the previous theorems just as they do in \cite{davidtoroalmostminimizers}. We state them here without proof and encourage the reader to refer to \cite{davidtoroalmostminimizers} for full details.

\begin{lemma}\label{averageenergybound}[Compare to Lemma 6.4 in \cite{davidtoroalmostminimizers}]
Let $u$ be an almost minimizer for $J$ in $\Omega$, and assume that $B(x_0, 4r_0) \subset \Omega$ with $u(x_0) = 0$ and $A(x_0) = I$. For $0 < r < \frac{1}{2}\min(1, r_0)$, set $t \equiv t(r) \equiv \left(1 - \frac{r^{\alpha/4}}{10}\right)r$. Then for $0 < r < \min(1/2, r_0)$ and each choice of sign, $\pm$, \begin{equation}\begin{aligned} &\left|\fint_{t(r)}^r \left(\int_{B(x_0,s)}|\nabla u^\pm(y)|^2 dy\right)ds - \fint_{t(r)}^r \left(\int_{\partial B(x_0,s)} u^\pm \frac{\partial u^\pm}{\partial n}\right)ds\right| \\ &\leq Cr^{n+\alpha/4}\left(1 + \fint_{B(x_0, \tilde{C}r_0)}|\nabla u|^2 + \log^2\frac{r_0}{r}\right). \end{aligned}\end{equation}

Here, $\partial u^\pm/\partial n$ denotes the radial derivative of $u^\pm$ and $C >0$ depend only on $\|q_{\pm}\|_{\infty}, n, \Lambda$, $\lambda, \|A\|_{C^{0,\alpha}}$ and the almost-minimization constants. \end{lemma}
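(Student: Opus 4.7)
The plan is to imitate the argument of Lemma 6.4 in \cite{davidtoroalmostminimizers}, using our variable-coefficient version Lemma \ref{varphicomputation} as the replacement for its harmonic-case input, and to let the cutoff do the bookkeeping that turns the linear-in-$\lambda$ error into exactly the quantity we want to estimate. I will treat the $u^+$ case; the argument for $u^-$ is identical after replacing $\lambda$ by $-\lambda$ in the appropriate places.

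First I would choose the radial cutoff $\varphi(y) = \eta(|y-x_0|)$, where $\eta \equiv 1$ on $[0,t(r)]$, $\eta \equiv 0$ on $[r,\infty)$, and $\eta$ is affine on $[t(r),r]$, so that $\eta'(s) = -1/(r-t(r)) = -10/r^{1+\alpha/4}$ there. Then $|\lambda\varphi|<1$ as long as $|\lambda|<1$, and the coarea formula identifies
\[
\int_{B(x_0,r)} \varphi\,|\nabla u^+|^2 \;=\; \fint_{t(r)}^{r}\!\! \int_{B(x_0,s)}|\nabla u^+|^2\,dy\,ds,
\]
\[
\int_{B(x_0,r)} u^+\langle \nabla u^+,\nabla\varphi\rangle \;=\; -\fint_{t(r)}^{r}\!\!\int_{\partial B(x_0,s)} u^+\,\tfrac{\partial u^+}{\partial n}\,d\sigma\,ds,
\]
because $\nabla \varphi = \eta'(|y-x_0|)\,\hat n$ is radial and the area between $t(r)$ and $r$ matches the mass of $\eta$ on $[t(r),r]$. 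Thus the bracket multiplying $2\lambda$ in \eqref{eqn:varphicomputation} is exactly the quantity whose smallness we want to establish.

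Next, applying Lemma \ref{varphicomputation} with $+\lambda$ and with $-\lambda$ and combining, I obtain a two-sided bound
\[
\left|\fint_{t(r)}^r\!\!\int_{B(x_0,s)}|\nabla u^+|^2 \,ds - \fint_{t(r)}^r\!\!\int_{\partial B(x_0,s)} u^+ \tfrac{\partial u^+}{\partial n}\,d\sigma\,ds\right|
\leq \frac{C\bigl(r^\alpha J_{x_0,r}(u)+r^{n+\alpha}\bigr)}{|\lambda|} + C|\lambda|\,\mathcal E(r),
\]
where $\mathcal E(r)$ collects the three quadratic-in-$\lambda$ contributions, namely $\int \varphi^2|\nabla u^+|^2$, $\int (u^+)^2|\nabla\varphi|^2$, and the cross term. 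Since $0\le\varphi\le 1$, the first is bounded by $\int_{B(x_0,r)}|\nabla u|^2$; since $\|\nabla\varphi\|_\infty\le 10\,r^{-1-\alpha/4}$, $|B(x_0,r)\setminus B(x_0,t(r))| \lesssim r^{n+\alpha/4}$, and by Theorem~\ref{T:continuity} (applied at the zero $x_0$) $|u(y)| \le C r(1+\omega(u,x_0,2r_0)+\log(r_0/r))$, the second is bounded by $Cr^{n-\alpha/4}(1+\omega^2+\log^2(r_0/r))$. Cauchy--Schwarz handles the cross term with a smaller exponent. Finally \eqref{need} gives $\int_{B(x_0,r)}|\nabla u|^2 \le Cr^n(1+\omega(u_{x_0},x_0,\tilde Cr_0)^2+\log^2(r_0/r))$ and controls $J_{x_0,r}(u)$ via the ellipticity of $A$ and the $L^\infty$ bounds on $q_\pm$.

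Putting these together the right-hand side is of the form $A/|\lambda| + B|\lambda|$ with $A\lesssim r^{n+\alpha}(1+\omega^2+\log^2(r_0/r))$ and $B\lesssim r^{n-\alpha/4}(1+\omega^2+\log^2(r_0/r))$. Optimizing by choosing $|\lambda|=\sqrt{A/B}\asymp r^{5\alpha/8}$ (well within the constraint $|\lambda|<1$ since $r<1/2$) yields an error of order $r^{n+3\alpha/8}(1+\omega^2+\log^2(r_0/r))$, which is stronger than the asserted $r^{n+\alpha/4}$. The main obstacle, and the only place one must be careful, is the bookkeeping that shows the $\lambda^2$-bracket is really of size $r^{n-\alpha/4}$ and not worse: the squared $L^\infty$ bound on $u$ is exactly matched by the gain $r^{\alpha/4}$ from the thin annular support of $\nabla\varphi$, and this balance is what forces the choice $t(r)=(1-r^{\alpha/4}/10)r$ in the statement. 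The $u^-$ case is identical after replacing $u^+$ by $u^-$ throughout and noting that Lemma \ref{varphicomputation} holds for either choice of sign.
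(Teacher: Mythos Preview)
Your proposal is correct and follows exactly the approach the paper indicates: the paper does not give its own proof of this lemma but refers the reader to Lemma 6.4 of \cite{davidtoroalmostminimizers}, with the sole modification that the variable-coefficient Lemma \ref{varphicomputation} replaces its constant-coefficient counterpart. Your choice of the affine radial cutoff supported on the annulus $B(x_0,r)\setminus B(x_0,t(r))$, the layer-cake identities turning the $\lambda$-bracket into the desired averaged quantities, the two-sided application of \eqref{eqn:varphicomputation} with $\pm\lambda$, and the final optimization in $\lambda$ reproduce precisely that argument; your bookkeeping on the $\lambda^2$-terms (in particular the balance between $\|\nabla\varphi\|_\infty^2$ and the annular measure that yields the $r^{n-\alpha/4}$ scale) is correct, and your resulting exponent $n+3\alpha/8$ is indeed slightly better than the stated $n+\alpha/4$.
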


\begin{theorem}\label{almostmonotonicity}
Let $u$ be an almost minimizer for $J$ in $\Omega$ and let $\delta$ be such that $0 < \delta < \alpha/4(n+1)$. Let $B(x_0, 4r_0) \subset \Omega$ with $u(x_0) = 0$ and $A(x_0) = I$.  Then there exists $C > 0$, depending on the usual parameters such that for $0 < s < r < \frac{1}{2}\min(1, r_0)$,
\begin{equation}\label{eqnalmostmono}
\Phi(u, x_0, s) \leq \Phi(u, x_0, r) + C(x_0, r_0)r^\delta,
\end{equation}
where,
\begin{equation}\label{defofconstant}
C(x_0,r_0) \equiv C + C\left(\fint_{B(x_0, 2r_0)}|\nabla u|^2\right)^2 + C((\log r_0)_+)^4.
\end{equation}
\end{theorem}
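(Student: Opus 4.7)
The plan is to follow the strategy of the constant-coefficient case from \cite{davidtoroalmostminimizers}, which itself is a quantitative version of the classical Alt-Caffarelli-Friedman argument. The key observation is that since we are normalizing to $A(x_0)=I$ at the single point $x_0$, the H\"older continuity of $A$ gives an extra $O(r^\alpha)$ error, which can be absorbed into the error terms already present in Lemmas \ref{twosidedaplusbound} and \ref{averageenergybound}. So the argument should go through essentially mechanically once those two lemmas are in hand.

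Specifically, I would first set $\phi_\pm(r) = r^{-2}\Phi_\pm(u,x_0,r)$ so that $\Phi(u,x_0,r) = \phi_+(r)\phi_-(r)$, and study $(\log\Phi)'(r) = (\log\phi_+)'(r) + (\log\phi_-)'(r)$. Computing directly,
\begin{equation*}
\phi_\pm'(r) = r^{1-n}\fint_{\partial B(x_0,r)}|\nabla u^\pm|^2 \;-\; \tfrac{2}{r}\phi_\pm(r),
\end{equation*}
(up to innocuous normalizations) so the game reduces to bounding $\phi_\pm'/\phi_\pm$ from below by a spherical Rayleigh quotient. The identity $\int_{\partial B(x_0,s)} u^\pm(\partial u^\pm/\partial n) = \int_{B(x_0,s)}|\nabla u^\pm|^2$ (which is classical when $u^\pm$ is harmonic) is replaced in our setting by Lemma \ref{averageenergybound}, where the average over $s\in(t(r),r)$ absorbs the fact that $u^\pm$ is merely an almost minimizer. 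Combined with Lemma \ref{twosidedaplusbound}, which identifies $\phi_\pm(r)$ (up to acceptable errors) with the combination $\frac{1}{n(n-2)}\fint_{B(x_0,r)}|\nabla u^\pm|^2 + \tfrac{1}{2}\fint_{\partial B(x_0,r)}(u^\pm/r)^2$, this lets us write $\phi_\pm'/\phi_\pm$ in terms of the Rayleigh quotient of $u^\pm\restriction_{\partial B(x_0,r)}$ plus an error of size $r^{\delta-1}\,C(x_0,r_0)$ (after choosing $\delta < \alpha/(4(n+1))$).

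Second, I would invoke the Friedland--Hayman (or Beckner--Kenig--Pipher) spherical eigenvalue inequality: for two disjoint open subsets of $S^{n-1}$ (namely $\{u>0\}\cap\partial B(x_0,r)$ and $\{u<0\}\cap\partial B(x_0,r)$), the characteristic exponents $\beta_\pm$ of the extensions of the eigenfunctions satisfy $\beta_+ + \beta_- \geq 2$. Translated into our Rayleigh-quotient formulation this yields
\begin{equation*}
(\log\phi_+)'(r) + (\log\phi_-)'(r) \;\geq\; -\,C(x_0,r_0)\,r^{\delta-1}
\end{equation*}
after the averaging in $s$ from Lemma \ref{averageenergybound}, exactly as in \cite{davidtoroalmostminimizers}. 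The factor $A(x_0)=I$ was used only to linearize the quadratic form near $x_0$ at cost $O(r^\alpha)$; this cost was already incorporated in Lemma \ref{twosidedaplusbound}, so no new idea is needed in the variable-coefficient case.

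Finally I would integrate the differential inequality from $s$ to $r$. The point is that $\Phi(u,x_0,\cdot)$ is controlled a priori: by Theorem \ref{T:continuity} and the logarithmic $L^2$-gradient bound \eqref{need}, $\Phi_\pm(u,x_0,r) \leq C\,(\log(r_0/r))^2 + C\fint_{B(x_0,2r_0)}|\nabla u|^2 \cdot r^2$, so $\Phi(u,x_0,r)$ is bounded above on $(0,r_0/2)$ by a constant of the form $C(x_0,r_0)$. Hence exponentiating the inequality $(\log\Phi)'(r) \geq -C(x_0,r_0)r^{\delta-1}$ and integrating gives $\Phi(u,x_0,s) \leq \Phi(u,x_0,r)\exp(C(x_0,r_0)(r^\delta - s^\delta)/\delta) \leq \Phi(u,x_0,r) + C(x_0,r_0) r^\delta$, which is the desired conclusion after adjusting the constant. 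The main obstacle I anticipate is the bookkeeping for the error terms: one must check that at each step the quantity $C(x_0,r_0)$ given by \eqref{defofconstant} truly dominates every error generated by the two preceding lemmas and by the logarithmic growth of $\omega(u,x_0,\cdot)$; in particular the squared-logarithm terms in Lemmas \ref{twosidedaplusbound} and \ref{averageenergybound} are what force the fourth power of $(\log r_0)_+$ in \eqref{defofconstant}.
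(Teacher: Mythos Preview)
Your proposal is correct and is exactly the approach the paper intends: the paper gives no proof of its own here, stating only that the result ``follow[s] from the previous theorems just as they do in \cite{davidtoroalmostminimizers}'' via Lemmas~\ref{twosidedaplusbound} and~\ref{averageenergybound}. Your outline---logarithmic derivative of $\Phi$, Lemma~\ref{averageenergybound} replacing the harmonic identity $\int_{\partial B} u^\pm \partial_n u^\pm = \int_B |\nabla u^\pm|^2$, Lemma~\ref{twosidedaplusbound} identifying $\phi_\pm$ with the Rayleigh-type quantity, then Friedland--Hayman and integration---is precisely the argument of \cite{davidtoroalmostminimizers}, and as you correctly note, the variable-coefficient errors have already been absorbed into those two lemmas, so nothing new is needed.

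One minor caution on implementation: rather than a genuine differential inequality $(\log\Phi)'(r)\ge -C(x_0,r_0)r^{\delta-1}$ (which is not quite available since $\Phi$ is not known to be differentiable and the lemmas only give averaged information on $(t(r),r)$), the actual mechanism in \cite{davidtoroalmostminimizers} compares $\Phi$ at the discrete scales $r$ and $t(r)=(1-r^{\alpha/4}/10)r$ and iterates. This is the same idea, but when you write it up you should phrase the final step as that discrete iteration rather than literal integration.
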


\section{Local Lipschitz continuity for two-phase almost minimizers}\label{sec:twophaselipschitz}

The proof of two-phase Lipschitz continuity follows the same blue-print as the one-phase case. We start with Lemma \ref{twosidedlipschitzlemma} which is an analogue of Lemma \ref{L:5.1}. However, the proof of  Lemma \ref{twosidedlipschitzlemma} is a bit more involved as it requires the use of the two-phase almost monotonicity formula, \eqref{eqnalmostmono}.

\begin{lemma}\label{twosidedlipschitzlemma}
Let $u$ be an almost minimizer for $J$ in $\Omega$ and let $B_0 \equiv B(x_0, \lambda^{-1/2}r_0) \subset \Omega$ be given. Let $\theta \in (0,1/3)$ and $\beta \in (0,1)$. Then there exists $\gamma > 0, K_1 > 1$ and $r_1 > 0$ (which may depend on $\theta$ and $\beta$) such that if $x \in B(x_0, r_0)$ and $0 < r \leq r_1$ satisfy \begin{equation}\label{uiszerosomewhere} u_x(y) = 0 \mbox{ for some } y \in B(x, 2r/3), \end{equation}
\begin{equation}\label{bissmall}
|b(u_x,x,r)| \leq \gamma r(1+\omega(u_x, x,r)), \mbox{ and}
\end{equation}
\begin{equation}\label{omegaisbig}
\omega(u_x, x,r) \geq K_1.
\end{equation}
Then, \begin{equation}\label{omegadecays}
\omega(u_x,x, \theta r) \leq \beta \omega(u_x, x,r).
\end{equation}
\end{lemma}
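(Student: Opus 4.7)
The plan is to follow the blueprint of Lemma \ref{L:5.1}, splitting into cases based on whether the harmonic extension $(u_x)_r^\ast$ of $u_x|_{\partial B(x,r)}$ on $B(x,r)$ is well-approximated by an affine function. The novelty in the two-phase setting is that the maximum principle no longer forces positivity of $(u_x)_r^\ast$, so the ``bad'' case in which the affine part dominates must be ruled out using the two-phase almost-monotonicity formula of Theorem \ref{almostmonotonicity}; this is where the hypothesis that $u_x$ vanishes somewhere in $B(x,2r/3)$ gets used. I would decompose $(u_x)_r^\ast = a + (v_x)_r^\ast$ with $a(z) = (u_x)_r^\ast(x) + \langle \nabla(u_x)_r^\ast(x), z-x\rangle$ and $(v_x)_r^\ast$ harmonic with $(v_x)_r^\ast(x) = 0 = \nabla(v_x)_r^\ast(x)$, and observe that the derivation of \eqref{5.5}--\eqref{5.13} (which never used the sign of $u$) carries over verbatim to yield
$$
\omega(u_x,x,\theta r) \leq |\nabla(u_x)_r^\ast(x)| + C\bigl(\theta^{-n/2}(r^{\alpha/2}+K_1^{-1}) + \theta\gamma(K_1^{-1}+1)\bigr)\omega(u_x,x,r).
$$
Given $\theta,\beta$, I would set $\eta := \sqrt{1-\beta^2/4}$ as my dichotomy threshold, and choose $\gamma,r_1$ small and $K_1$ large (depending on $\theta,\beta,B_0$) so that the bracketed error is at most $\beta/2$ whenever $r\leq r_1$ and $\omega(u_x,x,r) \geq K_1$.

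In the case $\int_{B(x,r)}|\nabla(v_x)_r^\ast|^2 \geq \eta^2\omega^2|B(x,r)|$, the Pythagorean identity \eqref{5.8} together with $\int_{B(x,r)}|\nabla(u_x)_r^\ast|^2 \leq \int_{B(x,r)}|\nabla u_x|^2$ will force $|\nabla(u_x)_r^\ast(x)| \leq \sqrt{1-\eta^2}\,\omega = (\beta/2)\omega$, and \eqref{omegadecays} follows directly. In the complementary case, $\int_{B(x,r)}|\nabla(v_x)_r^\ast|^2 < \eta^2\omega^2|B(x,r)|$, I plan to derive a contradiction for $K_1$ large. Combining \eqref{gradientcomparisonrbis} applied to $u_x$ (valid since $A_x(x)=I$) with the Pythagorean identity gives $|\nabla(u_x)_r^\ast(x)|^2 \geq (1-Cr^\alpha-\eta^2)\omega^2 - C \geq (\beta^2/8)\omega^2$ for $K_1$ large and $r_1$ small. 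Since $|b(u_x,x,r)| = |(u_x)_r^\ast(x)| \leq \gamma r(1+\omega)$ can be made much smaller than $r\omega$ by choosing $\gamma$ small, the affine function $a$ takes values $\geq c\beta r\omega$ on one half-ball $B_+\subset B(x,r)$ and values $\leq -c\beta r\omega$ on the opposite $B_-$. A Poincar\'e inequality for $(v_x)_r^\ast$ (harmonic with both value and mean zero at $x$) together with the $L^2$-Poincar\'e estimate for $u_x-(u_x)_r^\ast$ coming from \eqref{gradientcomparisonrbis} yield $\fint_{B(x,r)}|u_x-a|^2 \leq Cr^2\omega^2$, so Chebyshev forces the majority of $B_\pm$ into $\{\pm u_x \geq c'\beta r\omega\}$.

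To close the loop I would invoke Theorem \ref{almostmonotonicity} at the zero $z_0 := T_x^{-1}(y) \in E_x(x,2r/3)$ of $u$. Since $A(z_0)$ is generally not $I$, I would first pass via Lemma \ref{lem: changingvariables} to $u_{z_0}$, whose base-point matrix equals the identity, observing that $u_{z_0}(z_0) = 0$ and that the Alt--Caffarelli--Friedman functional $\Phi$ transforms between $u$ and $u_{z_0}$ by bounded multiplicative factors. The lower bounds on $u_x^\pm$ translate, via the affine maps $T_x,T_{z_0}$, into corresponding $L^2$ bounds for $u_{z_0}^\pm$ on comparable subsets of $B(z_0,Cr)$; integrating over radii yields some $s \in [r/C,r]$ with $\fint_{\partial B(z_0,s)}(u_{z_0}^\pm)^2 \gtrsim \beta^2 r^2\omega^2$. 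Plugging into Lemma \ref{twosidedaplusbound} at scale $s$ and absorbing the error terms using Theorem \ref{T:continuity} and the uniform bound on $\fint_{B_0}|\nabla u|^2$ will yield $\Phi_\pm(u_{z_0},z_0,s) \gtrsim \beta^2 r^4\omega^2$, hence $\Phi(u_{z_0},z_0,s) \gtrsim \beta^4\omega^4$. On the other hand, Theorem \ref{almostmonotonicity} applied between $s$ and $cr_0$ bounds $\Phi(u_{z_0},z_0,s) \leq \Phi(u_{z_0},z_0,cr_0) + C(z_0,r_0)r_0^\delta$, with the right side controlled by a constant depending only on $B_0$. Combining gives $\omega^4 \leq C(B_0)/\beta^4$, and choosing $K_1 > (C(B_0))^{1/4}/\beta$ produces the desired contradiction. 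The main obstacle, and the essential new ingredient compared to Lemma \ref{L:5.1}, is exactly this almost-monotonicity step: converting the $L^2$ size of $u^\pm$ on the half-balls $B_\pm$ into a genuine lower bound for $\Phi_\pm$ at the zero $z_0$, while handling the change of coordinates $T_x \to T_{z_0}$ so that the base-point matrix is the identity there.
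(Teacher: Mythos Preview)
Your argument is essentially sound, but it follows a different and more circuitous route than the paper. You faithfully mimic the affine decomposition of Lemma~\ref{L:5.1}, carry the dichotomy on $\fint|\nabla(v_x)_r^\ast|^2$, and in the ``bad'' case exploit the large affine slope to force $u_x^\pm$ large on opposite half-balls, then transfer this via Lemma~\ref{twosidedaplusbound} to a lower bound on $\Phi(u_{z_0},z_0,s)$ that contradicts the almost-monotonicity upper bound. (A small scaling slip: Lemma~\ref{twosidedaplusbound} gives $\Phi_\pm \gtrsim s^2\omega^2$, not $s^4\omega^2$; the conclusion $\Phi\gtrsim\omega^4$ is unaffected.)

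The paper's proof is shorter and avoids the affine decomposition entirely. It bounds $\omega(u_x,x,\theta r)$ directly in terms of $b^+(u_x,x,r)/r$ via the crude gradient estimate $|\nabla(u_x)_r^\ast|\lesssim b^+/r$ on $B(x,\theta r)$, then uses $b^+\le 2\fint_{\partial}u_x^+ - b$ together with \eqref{bissmall} to reduce to bounding $r^{-1}\fint_{\partial B(x,r)}u_x^+$. This surface average is controlled by integrating on rays to a small ball near the zero $y$ (where continuity and $u_x(y)=0$ make $u_x^+$ small) plus $C(\eta)\,\omega(u_x^+,x,r)$; finally the monotonicity formula is invoked \emph{once} to get $\omega^+\omega^-\le C(B_0)$, which together with $(\omega^+)^2+(\omega^-)^2\ge K_1^2$ forces the smaller of $\omega^\pm$ to be $\le C(B_0)/K_1$. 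Compared to your approach, this bypasses the case analysis, the Chebyshev argument on half-balls, the delicate transfer of $L^2$ bulk bounds to a single sphere around $z_0$, and the lower-bound direction of Lemma~\ref{twosidedaplusbound}. Your route has the virtue of staying closer to the one-phase template; the paper's buys simplicity by using monotonicity as a direct \emph{upper} bound on $\omega^+\omega^-$ rather than deriving a lower bound on $\Phi$ to reach a contradiction.
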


\begin{proof}
Let $x,r$ be as in the statement, and $y \in B(x, 2r/3)$ such that $u_x(y) = 0$. As usual, let $(u_x)_r^*$ be the harmonic extension of $u_x$ to $ B(x,r)$. By standard elliptic estimates there exists a $c > 0$ (depending on $\theta \in (0,1/3)$ but independent of $r, x$) such that for all $z\in B(x, \theta r)$, $$|\nabla (u_x)_r^*(z)|\leq \frac{c}{r}\sup_{\zeta\in \partial B(x, 2\theta r)} |(u_x)_r^*(\zeta)| \leq \frac{cb^+(u_x, x, r)}{r}.$$  Using this estimate, \eqref{2.4mult} and the triangle inequality we can say:

\begin{equation}\label{e:triangletwophase}
\begin{aligned}
\omega^2(u_x, x, \theta r) \leq& 2\omega^2((u_x)_r^*, x, \theta r) + \fint_{B(x, \theta r)} |\nabla \left((u_x)_r^* - u_x\right)|^2\\
\leq& C\left(\frac{b^+(u_x, x, r)}{r}\right)^2 + C\fint_{B(x, r)} |\nabla \left((u_x)_r^* - u_x\right)|^2\\
\leq& C\left(\frac{b^+(u_x, x, r)}{r}\right)^2 + C + Cr^\alpha \omega^2(u_x, x, r),
\end{aligned}
\end{equation}
where $C > 0$ depends on the dimension and the almost-minimization properties of $u_x$, but, crucially, not on $K_1$. 

For any $\beta \in (0,1)$ if $K_1$ is large enough and $r_1$ is small enough (depending on $\beta$), then (using condition \eqref{omegaisbig}),  $$C + Cr^\alpha \omega^2(u_x, x, r) \leq \frac{\beta^2 \omega^2(u_x, x, r)}{2}.$$ Thus to prove \eqref{omegadecays}, it suffices to bound \begin{equation}\label{e:reductiontobplus}C\left(\frac{b^+(u_x, x, r)}{r}\right)^2 \leq \frac{\beta^2 \omega^2(u_x, x, r)}{2}.\end{equation}

Recall the notation $u^\pm_x := \max\{\pm u_x, 0\}$. To simplify our exposition, we need to specify whether $u^+_x$ or $u^-_x$ contributes more to the energy around $x$ at scale $r$ (of course the two situations are symmetric). So assume, without loss of generality, that $$\omega(u^+_x, x,r) \equiv \left(\fint_{B(x,r)} |\nabla u^+_x|^2\right)^{1/2} \leq  \left(\fint_{B(x,r)} |\nabla u^-_x|^2\right)^{1/2} \equiv \omega(u^-_x, x,r).$$

We will now bound $b^+(u_x, x,r)$ by $\omega(u_x^+, x, r)$. We then finish by bounding $\omega(u_x^+, x, r)$ by a constant depending on $x_0, r_0$. This requires the monotonicity formula we developed in the previous section. 

To begin, note that $$\frac{b^+(u_x, x,r)}{r} \leq \frac{2}{r}\fint_{\partial B(x,r)} u^+_x - \frac{1}{r} b(u_x, x,r) \stackrel{\eqref{bissmall}}{\leq} \frac{2}{r}\fint_{\partial B(x,r)} u^+_x + \gamma(1+\omega(u_x, x,r)).$$ Choosing $\gamma > 0$ small (depending on $\beta$ and $K_1$), the second term on the right hand side above is dominated by $\frac{\beta \omega(u_x, x, r)}{8}$. So we have further simplified the problem and now it suffices to bound $$\frac{2}{r}\fint_{\partial B(x,r)} u^+_x \leq \frac{\beta \omega(u_x, x, r)}{2}.$$

Recall that $y\in B(x,2r/3)$ is such that $u_x(y) = 0$. Fix $\eta > 0$ small but to be determined later, and let $z\in B(y, \eta r/8)$. In particular $B(z,\eta r)\subset B(x,r)$. For such $z$ integrate on rays from points in $\partial B(z, \eta r)$ to points in $\partial B(x,r)$ and using Fubini we see that \begin{equation}\label{e:boundbypointsclosetoz}\begin{aligned} \fint_{\partial B(x,r)} u^+_x \leq& \fint_{\partial B(z,\eta r)} u^+_x + C(\eta) r\fint_{B(x,r)} |\nabla u_x^+|\\
\leq& \underbrace{\sup_{\partial B(z, \eta r)} u^+_x}_{I} + C(\eta) r\underbrace{\left(\fint_{B(x,r)}|\nabla u_x^+|^2\right)^{1/2}}_{II}.
\end{aligned}
\end{equation}

Term $I$ in \eqref{e:boundbypointsclosetoz} is small because points in $\partial B(z, \eta r)$ are close to $y$, $u_x(y)=0$ and $u_x$ does not oscillate too much. To wit, by Theorem \ref{T:continuity} applied inside the ball $B(x,r)$ (more specifically, using the penultimate equation in the Theorem's proof), for all $\zeta \in \partial B(z, \eta r)$, 
$$\begin{aligned} |u^+_x(\zeta)| =& |u^+_x(\zeta)-u_x^+(y)|\le C|\zeta-y|\left( 1+\omega(u_x,x, r) +\log\left(\frac{r}{|\zeta-y|}\right) \right)\\ \leq& C\eta r\left(1 + \omega(u_x, x,r) + \log\left(\frac{1}{\eta}\right)\right).\end{aligned}$$ Picking $\eta > 0$ small enough (again depending only on $K_1$ and $\beta$) this allows us to bound $I$ by $r \beta \omega(u_x,x, r)/8$ as desired.

To bound $II$ in \eqref{e:boundbypointsclosetoz} note that 
\begin{equation}\label{e:usemono}
\begin{aligned} \omega(u_x^+, x,r)^2\omega(u_x^-, x, r)^2 
\stackrel{\eqref{2.14g}}{\leq}& 
C\omega^2(u^+, x ,\Lambda^{1/2}r)\omega^2(u^-, x, \Lambda^{1/2}r)\\ \leq& C\omega^2(u^+, y, (1+\Lambda^{1/2})r)\omega^2(u^-, y, (1+\Lambda^{1/2})r)\\ 
\leq & 
C\omega^2(u^+_y, y, \lambda^{-1/2}(1+\Lambda^{1/2})r)\omega^2(u^-_y, y, \lambda^{-1/2}(1+\Lambda^{1/2})r)\\ \leq& C\Phi(u_y, y,\lambda^{-1/2}(1+\Lambda^{1/2})r)\\  
\stackrel{\eqref{eqnalmostmono}}{\leq}& 
C\Phi(u_y, y, (100+\Lambda)^{-1/2}r_0) + C r_0^\delta,
\end{aligned}
\end{equation} 
where $C >0$ depends on the $\fint_{B(x_0,2r_0)} |\nabla u|^2, r_0$ and the almost-minimization constants of $u$ but crucially does not depend on $x, r$ or $y$. In what remains, we will denote by $C(B_0)$ constants that are uniform over points and scales inside of $B(x_0, 2r_0)$.

Recall, from above that $$\begin{aligned} \Phi(f, y, r) &\equiv \frac{1}{r^4}\left(\int_{B(y,r)} \frac{|\nabla f^+|^2}{|z-y|^{n-2}}dz \right)\left(\int_{B(y,r)} \frac{|\nabla f^-|^2}{|z-y|^{n-2}}dz \right)\\
&\equiv \frac{1}{r^4}\Phi_+(f,y,r)\Phi_-(f,y,r)\end{aligned}.$$ 

For ease of notation let $c_1 =(100+\Lambda)^{-1/2}$, so that $B(y, c_1 r_0), B(y, \Lambda^{1/2}c_1r_0) \subset B(y, r_0) \subset B(x_0,2r_0)$. We estimate 
\begin{equation}\label{apmbound}
\begin{aligned}\frac{\Phi_{\pm}(u_y, y, c_1r_0)}{(c_1r_0)^2} 
=& \sum_{i=0}^\infty 2^{-2i}\frac{1}{(2^{-i}c_1r_0)^2}(\Phi_\pm(u_y, y, 2^{-i}c_1r_0) - \Phi_{\pm}(u_y, y, 2^{-i-1}c_1r_0))\\
 \leq& C \sum_{i=0}^\infty 2^{-2i} \fint_{B(y, 2^{-i}c_1r_0)} |\nabla u_y|^2 
 \stackrel{\eqref{2.11mult}}{\leq} 
 C\sum_{i=0}^\infty 2^{-2i}\left(\omega(u_y,y, c_1r_0)+i\right)^2\\
\leq& C(\omega(u_y, y, c_1r_0) + 1)^2 
\stackrel{\eqref{2.14g}}{\leq}  
C(\omega(u, y,r_0) + 1)^2\\ \leq& C(\omega(u, x_0, 2r_0) + 1)^2.
\end{aligned}\end{equation}

Combine \eqref{e:usemono} and \eqref{apmbound} to obtain, $$\omega(u_x^+, x,r)^2\omega(u_x^-, x, r)^2 \leq CC(B_0) \Rightarrow \omega(u_x^+, x, r) \leq \sqrt{CC(B_0)}.$$

Continuing we see that $$\omega(u_x^+, x,r)^2+\omega(u_x^-, x, r)^2 \geq K_1^2 \Rightarrow \omega(u_x^-, x, r)^2 \geq K_1^2 - CC(B_0) \geq K_1^2/2,$$ if $K_1 > 0$ is large enough (but chosen uniformly over $B_0$). Putting the above two offset equations together we have \begin{equation}\label{e:finalboundonwx}\begin{aligned} \frac{K_1^2\omega(u_x^+, x,r)^2}{2} \leq& CC(B_0)\\
\Rightarrow \omega(u_x^+, x, r) \leq& \frac{\sqrt{2CC(B_0)}}{K_1} \leq \frac{\beta \omega(u_x, x, r)}{16},\end{aligned}\end{equation} where the last inequality is again justified by choosing $K_1$ large enough depending on $\beta \in (0,1)$ and uniform over $B_0$. This completes the bound of $II$ in \eqref{e:boundbypointsclosetoz} and in turn completes the proof. 
\end{proof}

\begin{theorem}\label{T:7.1}
Let $u$ be an almost minimizer for $J$ in $\Omega$. Then $u$ is locally Lipschitz in $\Omega$. 
\end{theorem}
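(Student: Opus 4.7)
The plan is to adapt the scale-by-scale iteration from the proof of Theorem \ref{T:5.1}, with Lemma \ref{twosidedlipschitzlemma} playing the role of the one-phase decay Lemma \ref{L:5.1}, and with an extra sub-case to handle the possibility that $u$ has constant sign on the relevant ellipsoid. I would fix $x_0 \in \Omega$ and $r_0$ small enough that $\overline{B}(x_0, K_2 r_0) \subset \Omega$ (with $K_2$ as in Lemma \ref{L:4.4}), and aim to prove that for almost every Lebesgue point $x \in B(x_0, r_0)$, $|\nabla u(x)|$ is controlled by $\omega(u_{x_0}, x_0, 2r_0) + 1$ with a constant depending only on the usual structural constants and on $B(x_0, 2r_0)$ through the almost-monotonicity formula. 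As in Theorem \ref{T:5.1}, this pointwise bound, together with $u \in W^{1,2}_{\mathrm{loc}}$, gives local Lipschitz continuity on $B(x_0, r_0)$.

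First I would fix $\theta = 1/3$ and $\beta \in (0,1)$ close to $1$, apply both Lemma \ref{twosidedlipschitzlemma} and Lemma \ref{L:5.1} with these parameters to obtain compatible thresholds $\gamma, K_1, r_1$ (taking the minimum of the $\gamma$'s and $r_1$'s and the maximum of the $K_1$'s), and then produce $\tau, \eta, K, r_\gamma$ from Lemma \ref{L:4.3} and Lemma \ref{L:4.4} exactly as in the proof of Theorem \ref{T:5.1}. Setting $K_3 = \max(K_1, K)$ and $r_k = 3^{-k} r$, at each scale I would classify the pair $(x, r_k)$ into four cases. In case (A), $\omega(u_x, x, r_k) < K_3$ and the trivial bound $\omega(u_x, x, r_{k+1}) \leq 3^{n/2}\, \omega(u_x, x, r_k)$ holds. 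In case (B), $\omega \geq K_3$ and $|b(u_x, x, r_k)| \geq \gamma r_k(1+\omega)$; Lemma \ref{L:4.4} produces $\rho \in (\eta r_k/2, \eta r_k)$ with $(x, \rho) \in \mathcal{G}(\tau, 10, 3, r_\gamma)$, and Lemma \ref{L:4.3} then gives a Lipschitz bound for $u$ near $x$, with constant controlled by $\omega(u_x, x, r_k)$.

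Case (C), $\omega \geq K_3$ and $|b| \leq \gamma r_k(1 + \omega)$, is where the two-phase analysis enters and splits into (C1) and (C2). In (C1), $u_x$ vanishes at some point of $B(x, 2r_k/3)$, and Lemma \ref{twosidedlipschitzlemma} directly gives the geometric decay $\omega(u_x, x, r_{k+1}) \leq \beta\, \omega(u_x, x, r_k)$. In (C2), $u_x$ has no zero in $B(x, 2r_k/3)$; by continuity (Theorem \ref{T:continuity}), $u$ must then have constant sign on the whole ellipsoid $E_x(x, 2r_k/3)$, so $x$ lies at distance $\gtrsim r_k$ from the free boundary. In this case one applies the interior gradient estimate from the proof of Theorem \ref{T:Lippositive} (in particular \eqref{limnabla} and \eqref{equiv3.8g}) at $x$ with reference radius comparable to $r_k$, obtaining $|\nabla u(x)| \leq C\, \omega(u_x, x, r_k) + C\, r_k^{\alpha/2}$ at every Lebesgue point, with a constant independent of $r_k$ thanks to the affine invariance of the problem under the maps $T_x$.

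Letting $k_{\mathrm{stop}}$ denote the smallest $k$ at which case (B) or (C2) occurs (or $+\infty$ if neither ever does), iteration of the bounds from (A) and (C1) yields $\omega(u_x, x, r_k) \leq \max(\beta^k \omega(u_x, x, r),\, 3^{n/2} K_3)$ for $k \leq k_{\mathrm{stop}}$. If $k_{\mathrm{stop}} = +\infty$, sending $k \to \infty$ gives $|\nabla u(x)| \leq C K_3$ at Lebesgue points; otherwise, applying (B) or (C2) at scale $r_{k_{\mathrm{stop}}}$ and combining with the iteration bound yields the desired pointwise estimate. Choosing $r$ comparable to $r_0$ and concluding as at the end of the proof of Theorem \ref{T:5.1} finishes the argument. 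The main obstacle is the no-vanishing sub-case (C2): Lemma \ref{twosidedlipschitzlemma} genuinely requires a nearby zero of $u$ to exploit the almost-monotonicity formula, so one must instead extract quantitative regularity from Section \ref{S:C1beta} and verify carefully that the resulting Lipschitz constant is uniform in the scale $r_k$.
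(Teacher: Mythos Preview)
Your proposal is correct and follows essentially the same approach as the paper's proof: the paper also fixes $\theta=1/3$, runs a dyadic iteration in the scale, and stops the iteration as soon as either $|b|$ is large (your case (B), handled via Lemma~\ref{L:4.4} and Lemma~\ref{L:4.3}) or $u_x$ fails to vanish in $B(x,2r_k/3)$ (your case (C2), handled via the interior estimate \eqref{limnabla}). The only differences are organizational: the paper makes ``no vanishing'' a top-level stopping case (its Case~0) rather than a sub-case of small $|b|$, and it does not invoke Lemma~\ref{L:5.1} at all---that lemma is stated for $J^+$ and is not needed here, so you can drop it from your setup.
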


Again, we will show a more precise estimate;  that there exist $r_2>0$ and $C_2\ge 1$ (depending on $n,\kappa,\alpha,\lambda,\Lambda)$ such that for each choice of $x_0\in \Omega$ and $r_0>0$ such that $r_0\le r_2$ and $B(x_0,K_2r_0)\subset \Omega$ (with $K_2$ as in Lemma \ref{L:4.4}), then
\begin{equation}\label{7.25}
|u(x)-u(y)|\le C_2(\omega(u_{x_0},x_0,2r_0)+1)|x-y| \ \text{ for } x,y\in B(x_0,r_0).
\end{equation}

\begin{proof} Let $(x,r)$ be such that $B(x,K_2r)\subset \Omega$. We want to use the different Lemmas above to find a pair $(x,\rho)$ that allows us to control $u$. Pick $\theta=1/3, \beta = 1/2$ (smaller values would work as well), and let $\gamma, K_1, r_1$ be as in Lemma \ref{twosidedlipschitzlemma}. 

Pick $\tau=\tau_2/2$, where $\tau_2\in(0,\tau_1)$ where $\tau_1$ is the constant that we get in Lemma \ref{Gselfimprovement} applied with $C_1=3$ and $r_0=r_1$.
Here $\tau_2$ us the corresponding constant that appears in Lemma \ref{L:4.3}.
Let now $r_0,\eta, K$ be as in Lemma \ref{L:4.4} applied to $C_0=10$, and to $\tau$ and $\gamma$ as above. From Lemma \ref{L:4.4} we get a small $r_\gamma$. Set
\begin{equation}\label{e:kandrtwophase}
K_3\ge \max(K_1,K), \ \ \text{ and } \ \ r_2\le\min(r_1,r_{\gamma}).
\end{equation}
Let $r\le r_2$. We consider four cases.

\textbf{Case 0:}
\begin{equation}\label{e:novanish}
u_x(z) \neq 0,\qquad  \forall z \in B(x, 2r/3)
\end{equation}

\textbf{Case 1:} $u_x(z) = 0$ for some $z \in  B(x, 2r/3)$ and
\begin{equation}\label{7.26}
\begin{cases}
\omega(u_x,x,r)\ge K_3\\
b(u_x,x,r)\ge \gamma r(1+\omega(u_x,x,r))
\end{cases}
\end{equation}

\textbf{Case 2:} $u_x(z) = 0$ for some $z \in  B(x, 2r/3)$ and
\begin{equation}\label{7.27}
\begin{cases}
\omega(u_x,x,r)\ge K_3\\
b(u_x,x,r)<\gamma r(1+\omega(u_x,x,r))
\end{cases}
\end{equation}

\textbf{Case 3:} $u_x(z) = 0$ for some $z \in  B(x, 2r/3)$ and
\begin{equation}\label{7.28}
\omega(u_x,x,r)<K_3.
\end{equation}

Let us start with {\bf Case 0}. If $u_x$ does not vanish inside of $B(x, 2r/3)$ then we know that $u$ is $C^{1,\beta}$ in a neighborhood of $x$ and \eqref{limnabla} 
tells us
\begin{equation}\label{e:novanishreg}|\nabla u_x(y)|\le C\left(\omega(u_x,x,r)+r^{\alpha/2}\right) \ \text{ for almost every } y\in B(x,10^{-3}\lambda^{1/2}\Lambda^{-1/2}r).\end{equation}

If we are in case {\bf Case 1}, by \eqref{7.26}, we can apply Lemma \ref{L:4.4} to find $\rho\in\left(\frac{\eta r}{2},\eta r\right)$ such that  $(x,\rho)\in\mathcal{G}(\tau,10,3,r_{\gamma})$. 

Notice that $\tau_1$ obtained in Lemma \ref{Gselfimprovement} depends on an upper bound on $r_0$ (which we had taken to be $r_1$), so if we keep $C_1$ but have a smaller $r_0$, the same $\tau_1$ works. Notice that $\rho<\eta r<r<r_{\gamma}$.
The pair $(x,\rho)$ satisfies the assumptions of Lemmas \ref{Gselfimprovement}-\ref{L:4.3} (applied with $r_0=r_1$), that is, $(x,\rho)\in \mathcal{G}(\tau,10,3,r_{\gamma})$, where $\tau<\tau_1$. By Lemma \ref{L:4.3}, $u$ is $C_x$-Lipschitz in $B(x,\tau r/10)$.


By \eqref{4.37}, we can take
\begin{equation}\label{7.29}
C_x=C(\tau^{-\frac{n}{2}}\omega(u_x,x,\rho)+\rho^{\frac{\alpha}{2}})\le C(\tau^{-\frac{n}{2}}\eta^{-\frac{n}{2}}\omega(u_x,x,r)+r^{\frac{\alpha}{2}}).
\end{equation}
By Lemma \ref{L:4.3} we even know that $u$ is $C^{1,\beta}$ in a neighborhood of $x$, thus Case 1 yields additional regularity.

In the two remaining cases, we set 
\[
r_k=\theta^k r=3^{-k}r, \ k\ge 0.
\]
Our task is to control $\omega(u_x,x,r_k)$. If the pair $(x,r_k)$ ever satisfies \eqref{7.26} or \eqref{e:novanish} we denote $k_{\text{stop}}$ the smallest integer such that $(x,r_k)$ satisfies \eqref{7.26} or  \eqref{e:novanish} (notice that $k\ge 1$ since we are not in Cases 0 or 1). Otherwise, set $k_{\text{stop}}=\infty$.

Let $k<k_{\text{stop}}$ be given. If $(x,r_k)$ satisfies \eqref{7.27} and there exists a $y\in B(x, 2r_k/3)$ such that $u_x(y) = 0$, we can apply Lemma \ref{twosidedlipschitzlemma} at that point and scale. Therefore 
\begin{equation}\label{7.30}
\omega(u_x,x,r_{k+1})\le \omega(u_x,x,r_k)/2.
\end{equation}

Otherwise, $(x,r_k)$ satisfies \eqref{7.28} (since $k<k_{\text{stop}}$). Then
\begin{equation}\label{7.31}
\omega(u_x,x,r_{k+1})=\left(\fint_{B(x,r_{k+1})}|\nabla u_x|^2\right)^{1/2}\le 3^{\frac{n}{2}}\omega(u_x,x,r_k)\le 3^{\frac{n}{2}}K_3.
\end{equation}

By \eqref{7.30} and \eqref{7.31}, we obtain that for $0\le k\le k_{\text{stop}}$,
\begin{equation}\label{7.32}
\omega(u_x,x,r_k)\le \max\left(2^{-k}\omega(u_x,x,r),3^{\frac{n}{2}}K_3\right).
\end{equation}

If $k_{\text{stop}}=\infty$, this implies that
\begin{equation}\label{7.33}
\limsup_{k\rightarrow\infty} \omega(u_x,x,r_k)\le 3^{\frac{n}{2}}K_3.
\end{equation}
In particular, if $x$ is a Lebesgue point of $\nabla u_x$ (hence a Lebesgue point for $\nabla u$),
\begin{equation}\label{7.34}
|\nabla u_x(x)|\le 3^{n/2}K_3.
\end{equation}
This implies
\begin{equation}\label{7.34'}
|\nabla u(x)|\le C3^{n/2}K_3.
\end{equation}

If $k_{\text{stop}}<\infty$, we apply our argument from either Case 0 or Case 1 to the pair $(x,r_{k_{\text{stop}}})$ and get that $u$ is $C^{1,\beta}$ in a neighborhood of $x$. By either \eqref{e:novanishreg} or \eqref{7.29} and then \eqref{7.32},
\begin{align}
|\nabla u(x)|&\le C(\omega(u_x,x,r_{k_{\text{stop}}})+r_{k_{\text{stop}}}^{\frac{\alpha}{2}})\nonumber\\
&\le C\max\left(\beta^{k_{\text{stop}}}\omega(u_x,x,r),3^{\frac{n}{2}}K_3\right)+Cr^{\frac{\alpha}{2}}\nonumber\\
&\le C'\omega(u_x,x,r)+C',\label{7.35}
\end{align}
where $C'$ is independent of $x, r$. 
Notice that we still have \eqref{7.35} in Cases 0 or 1 (directly by \eqref{e:novanishreg} or \eqref{7.29}), and since \eqref{7.34'} is better than \eqref{7.35}, we proved that if $r\le r_2$, \eqref{7.35} holds for almost every $x\in\Omega$ with $B(x,K_2r)\subset \Omega$.

Now let $x_0\in\Omega$ and $r_0<r_2$ be such that $B(x_0,K_2r_0)\subset \Omega$. Then for almost every $x\in B(x_0,r_0)$, \eqref{7.35} holds with $r=r_0/2$ (so that $B(x,K_2r)\subset B(x_0,K_2r_0)$ and so
\begin{equation}\label{7.36}
|\nabla u(x)|\le C'\omega(u_x,x,r)+C'\le 2^{n/2}C'\omega(u_x,x_0,2r_0)+C'.
\end{equation}
Since we already know that $u$ is in the Sobolev space $W^{1,2}_{\text{loc}}(B(x_0,r_0))$, we deduce from \eqref{7.36} that $u$ is Lipschitz in $B(x_0,r_0)$ and \eqref{7.25} holds, proving Theorem \ref{T:7.1}.
\end{proof}



\end{document}